\setlist[enumerate]{leftmargin=.5in}
\setlist[itemize]{leftmargin=.5in}
\newcommand{\y}{\color{black}}
\newcommand{\h}{\color{black}}
\crefname{hypothesis}{Hypothesis}{Hypotheses}
\DeclareMathOperator*{\argmin}{arg\,min}
\NewDocumentCommand{\aspectratio}{smo}
 {
  \hbox_set:Nn \l_tmpa_box {\includegraphics{#2}}
  \IfNoValueTF{#3}
   {
    \__student_aspectratio:nn { \box_wd:N \l_tmpa_box } { \box_ht:N \l_tmpa_box }
   }
   {
    \IfBooleanTF{#1}{ \tl_gset:Nx } { \tl_set:Nx } #3
     {
      \__student_aspectratio:nn { \box_wd:N \l_tmpa_box } { \box_ht:N \l_tmpa_box }
     }
   }
 }
\newcommand{\neworrenewcommand}[1]{\providecommand{#1}{}\renewcommand{#1}}
\newcommand{\zoomincludgraphic}[9]{
    \neworrenewcommand{\ffoo}[5]{
\begin{tikzpicture}[x=#1, y=#1, font=\footnotesize]
\aspectratio{#2}[\imsizeratio] 

  \node[anchor = south east, inner sep=0] (image) at (1,0) {\includegraphics[width=#1]{#2}};
	    \coordinate (viewport lower left) at (#3,#4/\imsizeratio);  
	    \coordinate(viewport upper right) at (#5,#6/\imsizeratio);  
        \draw[##5, line width = ##4 pt] (viewport lower left) rectangle (viewport upper right);
 
     \pgfmathsetmacro{\multone}{#5-#3}
     \pgfmathsetmacro{\multtwo}{#6/\imsizeratio-#4/\imsizeratio}
     
     \ifthenelse{\equal{#9}{bottom_left} }{ 
	      \node[anchor=north, draw= ##3, inner sep=0pt, line width = ##2 pt,outer sep=0pt] (zoomPart) at (\multone*#7/2+##2/345*1.333, \multtwo*#7+##2/345*1.333) {
	       \scalebox{#7}{\tikz{
	         \clip (#3,#4/\imsizeratio) rectangle (#5,#6/\imsizeratio);
	           
	         \node[anchor=south east, inner sep=0] at (1,0) {\includegraphics[width=#1]{#2}}; 
	         }}};
	   \ifthenelse{\equal{##1}{line_connection_on} }{ 
		  \draw[red, dashed] (viewport upper right|-viewport lower left) -- (zoomPart.north east); 
		  \draw[red, dashed] (viewport lower left) -- (zoomPart.north west);
		   }{}
	       
	 }{}

     \ifthenelse{\equal{#9}{bottom_right} }{ 
	      \node[anchor=north, draw= ##3, inner sep=0pt, line width = ##2 pt,outer sep=0pt] (zoomPart) at (1-\multone*#7/2-##2/345*1.333, \multtwo*#7 + ##2/345*1.333) {
	       \scalebox{#7}{\tikz{
			 \clip (#3,#4/\imsizeratio) rectangle (#5,#6/\imsizeratio);
	         \node[anchor=south east, inner sep=0] at (1,0) {\includegraphics[width=#1]{#2}}; 
	         }}%
	       };
		\ifthenelse{\equal{##1}{line_connection_on} }{ 
			  \draw[red, dashed] (viewport upper right|-viewport lower left) -- (zoomPart.south west); 
			  \draw[red, dashed] (viewport upper right) -- (zoomPart.north west);
		   }{}
     }{}  
       
    \ifthenelse{\equal{#9}{up_right} }{  
	      \node[anchor=north, draw= ##3, inner sep=0pt, line width = ##2pt, outer sep=0pt] (zoomPart) at (1-\multone*#7/2-##2/345*1.333,1/\imsizeratio-##2/345*1.333) {
	       \scalebox{#7}{\tikz{
	          \clip (#3,#4/\imsizeratio) rectangle (#5,#6/\imsizeratio);
	          \node[anchor=south east, inner sep=0] at (1,0) {\includegraphics[width=#1]{#2}}; 
	         }}%
	         
	       };
	   \ifthenelse{\equal{##1}{line_connection_on} }{ 
		  \draw[red, dashed] (viewport lower left|-viewport upper right) -- (zoomPart.south west);
		  \draw[red, dashed] (viewport upper right) -- (zoomPart.south east);
		   }{}
     }{}

     \ifthenelse{\equal{#9}{up_left} }{ 
	      \node[anchor=north, draw= ##3, inner sep=0pt, line width = ##2pt,outer sep=0pt] (zoomPart) at (\multone*#7/2+##2/345*1.333, 1/\imsizeratio-##2/345*1.333) {
	       \scalebox{#7}{\tikz{
	         \clip (#3,#4/\imsizeratio) rectangle (#5,#6/\imsizeratio);
	           
	          \node[anchor=south east, inner sep=0] at (1,0) {\includegraphics[width=#1]{#2}}; 
	         }}};
		   \ifthenelse{\equal{##1}{line_connection_on} }{ 
			  \draw[red, dashed] (viewport lower left|-viewport upper right) -- (zoomPart.south west);
			  \draw[red, dashed] (viewport upper right) -- (zoomPart.south east);
			   }{}
	     }{}

  	\ifthenelse{\equal{#8}{help_grid_on} }{ 
           \begin{scope}[
                x={(image.south east)},
                y={(image.north west)},
                font=\footnotesize,
                help lines,
                overlay
            ]
            
            \draw[help lines, xstep=.1,ystep=.1,overlay] (0,0) grid (1,1);
            \foreach \x in {0,1,...,9} { 
                \node[anchor=north] at (\x/10,0) {0.\x}; 
            }
            \foreach \y in {0,1,...,9} {
                \node[anchor=east] at (0,\y/10) {0.\y};
            }
        \end{scope}    
	}{}  
   
\end{tikzpicture}

    }
    \ffoo
}
\title{Extrapolated Plug-and-Play Three-Operator Splitting Methods for Nonconvex Optimization with Applications to Image Restoration\thanks{Submitted to the editors October 22, 2023.
\funding{This work was supported by Grant NSFC/RGC N\_CUHK 415/19, Grant ITF ITS/173/22FP, Grant RGC 14300219, 14302920, 14301121, and CUHK Direct Grant for Research, the National Natural Science Foundation of China Grant 12001286, and the China Postdoctoral Science Foundation Grant 2022M711672.}}}
\author{Zhongming Wu\thanks{Co-first  author. School of Management Science and Engineering, Nanjing University of Information Science and Technology, Nanjing, China (\email{wuzm@nuist.edu.cn}).}
\and Chaoyan Huang\thanks{Co-first  author. Department of  Mathematics,  The  Chinese  University of  Hong  Kong,  Shatin,  Hong  Kong, China  (\email{cyhuang@math.cuhk.edu.hk}).}
\and Tieyong Zeng\thanks{Corresponding author. Department  of  Mathematics,  The  Chinese  University  of  Hong  Kong,  Shatin,  Hong  Kong, China (\email{zeng@math.cuhk.edu.hk}).
}}
\newcommand*{\addFileDependency}[1]{
  \typeout{(#1)}
  \@addtofilelist{#1}
  \IfFileExists{#1}{}{\typeout{No file #1.}}
}
\begin{document}
\maketitle

\begin{abstract}
This paper investigates the convergence properties and applications of the three-operator splitting method, also known as Davis-Yin splitting (DYS) method, integrated with extrapolation and Plug-and-Play (PnP) denoiser within a nonconvex framework. 
We first propose an extrapolated DYS method to effectively solve a class of structural nonconvex optimization problems that involve minimizing the sum of three possible nonconvex functions. 
Our approach provides an algorithmic framework that encompasses both extrapolated forward-backward splitting and extrapolated Douglas-Rachford splitting methods.
To establish the convergence of the proposed method, we rigorously analyze its behavior based on the Kurdyka-Łojasiewicz property, subject to some tight parameter conditions.
Moreover, we introduce two extrapolated PnP-DYS methods with convergence guarantee, where the traditional regularization prior is replaced by a gradient step-based denoiser. 
This denoiser is designed using a differentiable neural network and can be reformulated as the proximal operator of a specific nonconvex functional.
We conduct extensive experiments on image deblurring and image super-resolution problems, where our results showcase the advantage of the extrapolation strategy and the superior performance of the learning-based model that incorporates the PnP denoiser in terms of achieving high-quality recovery images. 
\end{abstract}

\begin{keywords}
Plug-and-Play, three-operator splitting method, nonconvex optimization, denoising prior, convergence guarantee
\end{keywords}

\begin{AMS}
  90C26, 90C30, 90C90, 65K05
\end{AMS}

\section{Introduction}
In this paper, we consider the following type of structural nonconvex optimization problem:
\begin{equation}\label{general}
    \min_{{\bf x}\in\mathbb{R}^n}  F({\bf x})= f_1({\bf x})+f_2({\bf x})+h({\bf x}),
\end{equation}
where $f_1 $ and $h $ are continuously differentiable and potentially nonconvex, and $f_2 $ is a proper closed (possibly nonconvex) function. The model \cref{general} captures  a rich number of applications in fields of deep learning, signal and image processing, and statistical learning, see e.g., \cite{bian2021three,combettes2021fixed,condat2023proximal,jain2017non,yang2011alternating,yin2015minimization,yurtsever2021three}. In particular, the smooth term includes the least squares or logistic loss functions, and the nonsmooth term can be represented as
 regularizers, e.g., to promote potential behavior such as sparsity and low-rank.

Splitting methods, which fully leverage the inherent separable structure, is a class of popular and state-of-the-art approaches for effectively addressing structural optimization problems. A generic way to solve the type of problem \cref{general} is the three-operator splitting method, also known as Davis-Yin splitting (DYS) { method which was} first studied in \cite{davis2017three} for convex optimization, i.e., all the involved functions in \cref{general} are convex. The concrete iterative scheme of DYS method can be read as  
\begin{equation}\label{DYS}
\left\{\begin{array}{l}
 {\bf y}^{k+1} \in \argmin\limits _{{\bf y}\in\mathbb{R}^n}\left\{f_1({\bf y})+\frac{1}{2 \gamma}\left\|{\bf y}-{\bf x}^k\right\|^2\right\}, \\ [0.3cm]
 {\bf z}^{k+1} \in \argmin\limits _{{\bf z}\in\mathbb{R}^n}\left\{f_2({\bf z})+\frac{1}{2 \gamma}\left\|{\bf z}-\left(2 {\bf y}^{k+1}-\gamma \nabla h\left({\bf y}^{k+1}\right)-{\bf x}^k\right)\right\|^2\right\}, \\ [0.3cm]
 {\bf x}^{k+1}={\bf x}^k+\left({\bf z}^{k+1}-{\bf y}^{k+1}\right) ,
\end{array}\right.      
 \end{equation}
where $\gamma>0$ is a proximal parameter. DYS method \cref{DYS} includes two proximal subproblems with respect to ${\bf y}$ and ${\bf z}$, which extends various previous splitting schemes such as the forward-backward splitting (FBS) method \cite{attouch2013convergence}, Douglas-Rachford splitting (DRS) method \cite{li2016douglas}, alternating direction method of multipliers (ADMM) algorithm \cite{han2022survey} and the generalized forward-backward splitting method \cite{raguet2013generalized}. Later on, some variants and extensions of DYS method are explored for convex optimization \cite{latafat2017asymmetric,ryu2020operator,salim2022dualize,tang2022preconditioned}. However, for the nonconvex setting as that in \cref{general}, convergence properties of the DYS method \cref{DYS} are less understood. 
In contrast, the FBS method and DRS method, two special cases of the DYS method, have been well studied for nonconvex optimization, see e.g., \cite{attouch2013convergence,li2016douglas,themelis2020douglas}. { Indeed, splitting methods are widely employed in image processing because numerous problems in image restoration can be addressed through variational methods. The resulting image is obtained as a minimizer of a suitable energy functional, typically exhibiting a separable structure. For recent applications in this field, we refer to \cite{deng2019new, liu2022operator, setzer2011operator, tang2022preconditioned}.}

Another captivating and intriguing topic within the realm of splitting methods is the incorporation of acceleration techniques. Since the pioneering work of Polyak \cite{polyak1964some} on the heavy-ball method approach to gradient descent, extrapolation, as well as named inertial strategy, has been adapted to various optimization schemes to achieve accelerated convergence. Notable examples include the accelerated proximal point algorithm \cite{chen2015general} for variational inequality problems and the accelerated FBS \cite{attouch2014dynamical,villa2013accelerated,beck2009fast} for convex optimization. Over the past decade, the extrapolation technique has also been extended to various splitting methods for solving nonconvex optimization problems and expediting convergence based on Kurdyka–{\L}ojasiewicz framework (see \Cref{def2.2}), as demonstrated in studies such as \cite{ochs2014ipiano,le2020inertial,wang2023generalized,liang2016multi,pock2016inertial,wu2019general,wu2021inertial,phan2023inertial}. In this paper, our first focus is to investigate the convergence properties of the DYS method \cref{DYS} when combined with extrapolation technique for solving \cref{general}. This endeavor will result in the development of a versatile framework encompassing extrapolated (or named inertial) FBS and extrapolated DRS methods as specialized schemes tailored for nonconvex optimization. 

Recently, Plug-and-Play (PnP) methods combine splitting algorithm with denoising priors are widely used in solving many practical problems \cite{gavaskar2021plug,wei2022tfpnp,wu2023retinex,li2023spherical}. PnP method offers a concise yet adaptable approach for integrating statistical priors into a problem, eliminating the requirement to explicitly construct an objective function.
The first PnP method was the PnP-ADMM developed in \cite{venkatakrishnan2013plug} to address a range of imaging problems, which simply replaces the proximal subproblem with the denoising prior. Since then, many PnP-based methods such as PnP-FBS \cite{sreehari2016plug,tirer2018image}, PnP-DRS \cite{buzzard2018plug,hurault2022proximal} and PnP-primal dual \cite{ono2017primal} approaches, reported empirical success on a large variety of applications, but with scarce theoretical guarantees. 
In several recent studies, the convergence of PnP methods has been achieved through the utilization of contractive fixed-point iterations. For example, the convergence of various proximal algorithms has been established by assuming properties such as denoiser averaging \cite{sun2019online}, firm nonexpansiveness \cite{sun2021scalable}, or simple nonexpansiveness \cite{liu2021recovery,reehorst2018regularization}.
However, it is important to note that off-the-shelf deep denoisers often lack 1-Lipschitz continuity, which is equivalent to nonexpansiveness. The imposition of strict Lipschitz constraints on the network adversely affects its denoising performance \cite{hertrich2021convolutional,hurault2022proximal}.

To address the challenge of nonexpansiveness in deep denoisers, Ryu et al. \cite{ryu2019plug} proposed a method where each layer is individually normalized using its spectral norm. However, this approach imposes limitations on the utilization of residual skip connections, which are widely employed in deep denoisers.
In a recent study, Hurault et al. \cite{hurault2022gradient} tackled this issue by training a deep image denoiser using a gradient-based PnP prior. By replacing the regularization step with the constructed denoiser, they demonstrated that the resulting gradient step PnP prior corresponds to the proximal operator of a specific nonconvex functional \cite{hurault2022proximal}. Under this condition, they successfully established the convergence of PnP-FBS, PnP-ADMM, and PnP-DRS iterates towards stationary points of explicit functions. Inspired by this research direction, it is worth exploring the convergence guarantees and potential applications of combining PnP methods with the DYS algorithm \cref{DYS} in the form of \cref{general}.

\subsection{Our contribution}
This paper provides a generic algorithm framework that combines splitting methods, extrapolation strategy, and deep prior. The main contributions of this paper are threefold:
\begin{itemize}
\item We propose an extrapolated DYS method for solving the type of structural nonconvex optimization problem \cref{general}, which provides a generic algorithm framework including extrapolated FBS and extrapolated DRS methods. Under the tight parameter conditions, the convergence of the generated iterates is established based on Kurdyka–{\L}ojasiewicz framework.
\item By replacing the regularization step with the gradient step-based denoiser, we propose two extrapolated PnP-DYS methods. The denoiser is constructed by a differentiable neural network and can be reformulated as the proximal operator of a specific nonconvex functional. The convergence of both PnP-DYS algorithms is also established.
\item Extensive experiments on image deblurring and image super-resolution problems are conducted to evaluate the performance of the proposed schemes. The numerical results illustrate the advantages and efficiency of the extrapolation strategy. Moreover, the experiments reveal the superiority of the PnP-based model with deep denoiser in terms of the quality of the recovered images. 
\end{itemize}

\subsection{Organization}
The remainder of this paper is organized as follows. Some related methods and preliminaries are reviewed in  \Cref{sec2}. An extrapolated DYS method with convergence analysis is developed in \Cref{sec:alg}. \Cref{sec4} combines PnP approach and produces two extrapolated PnP-DYS methods with convergence guarantee. Some experimental results are reported in \Cref{sec:experiments}, and the conclusions follow in
\Cref{sec:conclusions}.

 \subsection{Notation}
 We use $\mathbb{R}^n$ to denote the $n$-dimensional Euclidean space, $\mathbb{R}_+$ to denote the set of nonnegative real numbers, $\langle\cdot,\cdot\rangle$ to denote the inner product, and $\|\cdot\|$ to denote the norm induced from the inner product. For an extended real-valued function $f$, the domain of $f$ is defined as ${\rm dom} f:=\{{\bf x}\in\mathbb{R}^n\;|\;f({\bf x})<\infty\}$. We say that the function $f$ is proper if ${\rm dom}f\neq\emptyset$ and $f({\bf x}) > -\infty$ for any ${\bf x} \in {\rm dom}f$, and is closed if it is lower semicontinuous.
For any subset $S \subseteq \mathbb{R}^{n}$ and any point ${\bf x}\in \mathbb{R}^{n}$, the distance from ${\bf x}$ to $S$ is defined by
${\rm dist}({\bf x},S):= \inf\left\{\|{\bf y}-{\bf x}\|\; \big| \; {\bf y}\in S\right\},$
and ${\rm dist}({\bf x},S)=\infty$ for all $\bf x$ when $S=\emptyset$.

\section{Preliminaries}\label{sec2}
In this section, we review the definitions of subdifferential and Kurdyka-{\L}ojasiewicz (KL) property for further analysis.

\begin{definition} \label{def2.1}{\rm\cite{attouch2013convergence, bolte2014alternating}} \rm(Subdifferentials)
Let $f:\mathbb {R}^{n}\rightarrow (-\infty,+\infty]$ be a proper and lower semicontinuous function.
\begin{itemize}
\item[(i)] For a given ${\bf x}\in {\rm dom} f$, the Fr\'{e}chet subdifferential of $f$ at ${\bf x}$, written by $\widehat{\partial}f({\bf x})$, is the set of all vectors ${\bf u}\in \mathbb{R}^n$ satisfying\vspace{-0.05in}
$$\liminf_{{\bf y}\neq {\bf x}, {\bf y}\rightarrow {\bf x}}\frac{f({\bf y})-f({\bf x})-\langle {\bf u},{\bf y}-{\bf x}\rangle}{\|{\bf y}-{\bf x}\|}\geq0,\vspace{-0.05in}$$
and we set $\widehat{\partial}f({\bf x}) = \emptyset$ when ${\bf x}\notin {\rm dom}f$.
\vspace{0.2cm}
\item[(ii)] The limiting-subdifferential, or simply the subdifferential, of $f$ at ${\bf x}$, written by $\partial f({\bf x})$, is defined by 
\begin{equation}\label{pf}
\partial f({\bf x}):=\{{\bf u}\in\mathbb{R}^n\; | \; \exists ~ {\bf x}^k\rightarrow {\bf x}, ~{\rm s.t.}~f({\bf x}^k)\rightarrow f({\bf x})
 ~{\rm and}~ \widehat{\partial}f({\bf x}^k) \ni {\bf u}^k \rightarrow {\bf u} \}.  \end{equation}

\item[(iii)] A point ${\bf x}^*$ is called (limiting-)critical point or stationary point of $f$ if it satisfies $0\in\partial f({\bf x}^*)$, and the set of critical points of $f$ is denoted by ${\rm crit} f$.
\end{itemize}
\end{definition}

\cref{def2.1} implies that the property $\widehat{\partial}f({\bf x})\subseteq \partial f({\bf x})$ holds immediately, and $\widehat{\partial}f({\bf x})$ is closed and convex while $\partial f({\bf x})$ is closed. Indeed, the subdifferential \cref{pf} reduces to the gradient of $f$ denoted by $\nabla f$ if $f$ is continuously differentiable. Furthermore, as described in \cite{rockafellar2009variational}, if $g$ is a continuously differentiable function, it holds that $\partial(f+g)=\partial f+\nabla g$.
%

Next, we recall the KL property \cite{attouch2010proximal, bolte2014alternating}, which is important in the convergence analysis.

\begin{definition}\label{def2.2}(KL property and KL function)
Let $f:\mathbb{R}^{n}\rightarrow (-\infty,+\infty]$ be a proper and lower semicontinuous function.
\begin{itemize}
\item[$(a)$] The function $f$ is said to have KL property at ${\bf x}^*\in{\rm dom}(\partial f)$ if there exist $\eta\in(0,+\infty]$, a neighborhood $U$ of ${\bf x}^*$ and a continuous and concave function $\varphi:[0,\eta)\rightarrow \mathbb{R}_+$ such that
\begin{itemize}
\item[\rm(i)] $\varphi(0)=0$ and $\varphi$ is continuously differentiable on $(0,\eta)$ with $\varphi'>0$;

\item[\rm(ii)] for all ${\bf x}\in U\cap\{{\bf z} \in \mathbb{R}^n\; | \; f({\bf x}^*) < f({\bf z}) < f({\bf x}^*)+\eta\}$, the following KL inequality holds: 
\begin{equation} \label{kl}
\varphi'(f({\bf x})-f({\bf x}^*)){\rm dist}(0,\partial f({\bf x}))\geq1. 
\end{equation}
\end{itemize}

\item[$(b)$] If $f$ satisfies the KL property at each point of dom$(\partial f)$, then $f$ is called a KL function.
\end{itemize}
\end{definition}

Denote $\Phi_{\eta}$ as the set of functions $\varphi$ which satisfy the involved conditions in \cref{def2.2}(a). Then, we give an uniformized KL property which was established in \cite{bolte2014alternating} in the following, it will be useful for further convergence analysis.

\begin{lemma}\label{Lem2.1}{\rm\cite{bolte2014alternating}} {\rm (Uniformized KL property)} Let  $f:\mathbb{R}^{n}\rightarrow(-\infty,+\infty]$ be a proper and lower semicontinuous function and $\Omega$ be a compact set. Assume that $f$ is a constant on $\Omega$ and satisfies the KL property at each point of $\Omega$. Then, there exist $\varsigma>0,~\eta>0$ and $\varphi\in \Phi_{\eta}$ such that  \begin{equation}
\varphi'(f({\bf x})-f(\bar {\bf x})){\rm dist}(0,\partial f({\bf x}))\geq 1, 
\end{equation}
 for all $\bar {\bf x}\in \Omega$ and each ${\bf x}$ satisfying
${\rm dist}({\bf x},\Omega)<\varsigma$ and $f(\bar {\bf x}) < f({\bf x}) < f(\bar {\bf x})+\eta.$
\end{lemma}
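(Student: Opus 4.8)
The plan is to glue together finitely many local KL certificates by a compactness argument, in the style of \cite{bolte2014alternating}. First I would invoke the KL property of $f$ at each point ${\bf u}\in\Omega$ (\cref{def2.2}) to obtain a radius $\varepsilon_{\bf u}>0$ (shrinking the neighborhood $U$ of \cref{def2.2} to an open ball $B({\bf u},\varepsilon_{\bf u})\subseteq U$), a level $\eta_{\bf u}>0$, and a function $\varphi_{\bf u}\in\Phi_{\eta_{\bf u}}$ for which the KL inequality \cref{kl} holds at every ${\bf x}\in B({\bf u},\varepsilon_{\bf u})$ with $f({\bf u})<f({\bf x})<f({\bf u})+\eta_{\bf u}$. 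I would record that, since $f$ is constant on $\Omega$, the threshold $f({\bf u})$ is the same number for every base point ${\bf u}\in\Omega$; this is exactly what makes a \emph{uniform} statement possible.

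Next I would pass to a finite subcover: the balls $\{B({\bf u},\varepsilon_{\bf u}/2)\}_{{\bf u}\in\Omega}$ cover the compact set $\Omega$, so finitely many of them, say $B({\bf u}_1,\varepsilon_{{\bf u}_1}/2),\dots,B({\bf u}_p,\varepsilon_{{\bf u}_p}/2)$, already do. I would then set $\varsigma:=\tfrac12\min_{1\le i\le p}\varepsilon_{{\bf u}_i}>0$, $\eta:=\min_{1\le i\le p}\eta_{{\bf u}_i}>0$, and $\varphi:=\sum_{i=1}^p\varphi_{{\bf u}_i}$ restricted to $[0,\eta)$, which is contained in each $[0,\eta_{{\bf u}_i})$. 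A quick check shows $\varphi\in\Phi_\eta$: it is continuous and concave as a finite sum of such functions, $\varphi(0)=0$, and it is $C^1$ on $(0,\eta)$ with $\varphi'=\sum_i\varphi_{{\bf u}_i}'>0$. I would use the sum rather than the maximum precisely so that concavity and differentiability come for free.

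Finally I would verify the uniformized inequality for an arbitrary $\bar{\bf x}\in\Omega$ and an arbitrary ${\bf x}$ with ${\rm dist}({\bf x},\Omega)<\varsigma$ and $f(\bar{\bf x})<f({\bf x})<f(\bar{\bf x})+\eta$. Using compactness of $\Omega$, pick ${\bf w}\in\Omega$ realizing $\|{\bf x}-{\bf w}\|={\rm dist}({\bf x},\Omega)<\varsigma$ and an index $j$ with ${\bf w}\in B({\bf u}_j,\varepsilon_{{\bf u}_j}/2)$; the triangle inequality gives $\|{\bf x}-{\bf u}_j\|<\varsigma+\varepsilon_{{\bf u}_j}/2\le\varepsilon_{{\bf u}_j}$, so ${\bf x}\in B({\bf u}_j,\varepsilon_{{\bf u}_j})$. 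Since $f(\bar{\bf x})=f({\bf u}_j)$ (constancy on $\Omega$) and $\eta\le\eta_{{\bf u}_j}$, the level condition becomes $f({\bf u}_j)<f({\bf x})<f({\bf u}_j)+\eta_{{\bf u}_j}$, so the local KL inequality attached to ${\bf u}_j$ applies and yields $\varphi_{{\bf u}_j}'(f({\bf x})-f({\bf u}_j))\,{\rm dist}(0,\partial f({\bf x}))\ge1$. Because $\varphi'\ge\varphi_{{\bf u}_j}'$ pointwise on $(0,\eta)$ and $f({\bf x})-f({\bf u}_j)=f({\bf x})-f(\bar{\bf x})$, this upgrades to $\varphi'(f({\bf x})-f(\bar{\bf x}))\,{\rm dist}(0,\partial f({\bf x}))\ge1$, which is the desired conclusion.

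I do not expect a genuine obstacle here; the only delicate points are bookkeeping ones — halving the covering radii so that the triangle inequality closes, and exploiting that $f$ takes a single value on $\Omega$ so the varying threshold $f(\bar{\bf x})$ can be replaced by the fixed $f({\bf u}_j)$. The construction $\varphi=\sum_i\varphi_{{\bf u}_i}$ (or, alternatively, $\max_i\varphi_{{\bf u}_i}$ together with a small extra argument to recover concavity) is the one nontrivial design choice.
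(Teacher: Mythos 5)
Your argument is correct and is essentially the argument behind the cited result: the paper does not prove \cref{Lem2.1} itself but invokes it from \cite{bolte2014alternating}, whose proof proceeds exactly as you do — local KL certificates, a finite subcover of the compact set $\Omega$, $\eta:=\min_i\eta_{{\bf u}_i}$, $\varphi:=\sum_i\varphi_{{\bf u}_i}$, and the observation that constancy of $f$ on $\Omega$ lets the threshold $f(\bar{\bf x})$ be replaced by the common value. Your halving of the covering radii to get ${\bf x}\in B({\bf u}_j,\varepsilon_{{\bf u}_j})$ is only a bookkeeping variant of the original choice of $\varsigma$ so that the $\varsigma$-neighborhood of $\Omega$ lies in the union of the balls, and all the verifications ($\varphi\in\Phi_\eta$, $\varphi'\geq\varphi_{{\bf u}_j}'$) are sound.
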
\par
Below we give a well-known descent lemma for smooth functions in the literature and the detailed proof can be found in \cite[Lemma 1.2.3]{nesterov2004}.
 \begin{lemma} \rm\cite{nesterov2004} \label{Lem2.2}
 Let $h:~\mathbb{R}^{n}\rightarrow \mathbb{R}$ be a continuously differentiable function with gradient $\nabla h$ assumed $L_h$-Lipschitz continuous. Then, we have  
 \begin{equation}
 \Big|h({\bf u})-h({\bf v})-\langle {\bf u}-{\bf v},\nabla h({\bf v})\rangle\Big|\leq \frac{L_h}{2}\|{\bf u}-{\bf v}\|^2, \qquad \forall ~ {\bf u},{\bf v}\in \mathbb{R}^{n}. \end{equation}
 \end{lemma}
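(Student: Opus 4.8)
The plan is to prove this standard descent inequality by reducing the multivariate statement to a one-dimensional integral along the segment joining $\bf v$ to $\bf u$, and then controlling the integrand with the Lipschitz hypothesis. Since the paper cites \cite[Lemma 1.2.3]{nesterov2004} for the full details, I would give the short self-contained argument rather than reproduce the reference.

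First I would fix ${\bf u},{\bf v}\in\mathbb{R}^n$ and define $g:[0,1]\to\mathbb{R}$ by $g(t)=h\bigl({\bf v}+t({\bf u}-{\bf v})\bigr)$. Because $h$ is continuously differentiable, the chain rule gives that $g$ is continuously differentiable on $[0,1]$ with $g'(t)=\langle\nabla h({\bf v}+t({\bf u}-{\bf v})),\,{\bf u}-{\bf v}\rangle$. The fundamental theorem of calculus then yields
\begin{equation*}
h({\bf u})-h({\bf v})=g(1)-g(0)=\int_0^1\langle\nabla h({\bf v}+t({\bf u}-{\bf v})),\,{\bf u}-{\bf v}\rangle\,dt.
\end{equation*}
Subtracting the constant-in-$t$ quantity $\langle{\bf u}-{\bf v},\nabla h({\bf v})\rangle=\int_0^1\langle\nabla h({\bf v}),{\bf u}-{\bf v}\rangle\,dt$ from both sides gives the key identity
\begin{equation*}
h({\bf u})-h({\bf v})-\langle{\bf u}-{\bf v},\nabla h({\bf v})\rangle=\int_0^1\bigl\langle\nabla h({\bf v}+t({\bf u}-{\bf v}))-\nabla h({\bf v}),\,{\bf u}-{\bf v}\bigr\rangle\,dt.
\end{equation*}

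Next I would estimate the right-hand side. Taking absolute values, moving them inside the integral, and applying the Cauchy--Schwarz inequality to the integrand gives the bound $\int_0^1\|\nabla h({\bf v}+t({\bf u}-{\bf v}))-\nabla h({\bf v})\|\,\|{\bf u}-{\bf v}\|\,dt$. Now the $L_h$-Lipschitz continuity of $\nabla h$ bounds the first factor by $L_h\,\|t({\bf u}-{\bf v})\|=L_h t\,\|{\bf u}-{\bf v}\|$, so the whole expression is at most $L_h\|{\bf u}-{\bf v}\|^2\int_0^1 t\,dt=\tfrac{L_h}{2}\|{\bf u}-{\bf v}\|^2$, which is exactly the claimed inequality.

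There is no real obstacle here: the only point needing a word of care is the justification that $g$ is $C^1$ so that the fundamental theorem of calculus applies on $[0,1]$, which is immediate from $h\in C^1$ and the chain rule, and the interchange of absolute value with the integral, which is the standard triangle-inequality estimate for integrals. Everything else is a routine Cauchy--Schwarz plus Lipschitz computation, and the symmetric roles of $\bf u$ and $\bf v$ are not needed since the two-sided bound follows directly from the single absolute-value estimate above.
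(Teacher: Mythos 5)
Your argument is correct and is precisely the standard proof of the descent lemma found in the cited reference \cite[Lemma 1.2.3]{nesterov2004}, which the paper itself does not reproduce but simply points to: parametrize the segment, apply the fundamental theorem of calculus, then Cauchy--Schwarz and the Lipschitz bound, and integrate $t$ over $[0,1]$. No gaps; this is essentially the same approach as the paper's (referenced) proof.
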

\begin{lemma}\label{bot} {\rm\cite{boct2016inertial}}
Let $\{a_n\}$ and $\{b_n\}$ be two nonnegative sequences satisfying $\sum_{n\in\mathbb{N}}b_n<\infty$ and $a_{n+1}\leq a\cdot a_n+b\cdot a_{n-1}+b_n$ for all $n\geq 1$, where $a\in\mathbb{R}$, $b\geq 0$ and $a+b<1$. Then, we have $\sum_{n\in\mathbb{N}}a_n<\infty$.
\end{lemma}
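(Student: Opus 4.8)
\emph{Proof sketch.}
The plan is to collapse the second-order recursion into a first-order contraction. Concretely, I would introduce the auxiliary sequence $\delta_n:=a_n+\alpha\,a_{n-1}$ for a carefully chosen constant $\alpha>0$, arrange that $\delta_{n+1}\le\rho\,\delta_n+b_n$ for some contraction factor $\rho\in(0,1)$, then unroll this first-order inequality and sum a geometric series (using $\sum_n b_n<\infty$) to obtain $\sum_n\delta_n<\infty$, and finally conclude from $0\le a_n\le\delta_n$.

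First I would pin down the constants. The recursion $a_{n+1}\le a\,a_n+b\,a_{n-1}+b_n$ has ``characteristic'' quadratic $\rho^2-a\rho-b=0$, whose larger root is $\rho_+:=\frac12\big(a+\sqrt{a^2+4b}\big)\ge\max\{a,0\}$. Since $b\ge0$ forces $a\le a+b<1$, we have $2-a>0$, and squaring the inequality $\sqrt{a^2+4b}<2-a$ shows that $\rho_+<1$ is equivalent to $b<1-a$, i.e.\ to the hypothesis $a+b<1$. Hence I may set $\rho:=\frac12(1+\rho_+)\in(\rho_+,1)\subset(0,1)$ and $\alpha:=\rho-a$. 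Because $\rho>\rho_+\ge\max\{a,0\}$ we get $\alpha>0$, and because $\rho\ge\rho_+$ we get $\rho^2-a\rho-b\ge0$, that is $b\le\rho\alpha$.

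Next comes the key one-step estimate. With $\delta_n:=a_n+\alpha\,a_{n-1}\ge0$ for $n\ge1$, the hypothesis together with $a+\alpha=\rho$ and $b\le\rho\alpha$ gives
\[
\delta_{n+1}=a_{n+1}+\alpha a_n\le (a+\alpha)a_n+b\,a_{n-1}+b_n=\rho\,a_n+b\,a_{n-1}+b_n\le \rho\,a_n+\rho\alpha\,a_{n-1}+b_n=\rho\,\delta_n+b_n .
\]
Unrolling, $\delta_{n+1}\le\rho^{\,n}\delta_1+\sum_{j=1}^{n}\rho^{\,n-j}b_j$ for $n\ge1$; summing over $n$ and interchanging the order of the nonnegative double sum yields $\sum_{n\ge1}\delta_n\le\frac{1}{1-\rho}\big(\delta_1+\sum_{j\ge1}b_j\big)<\infty$, since $\delta_1=a_1+\alpha a_0$ is finite and $\sum_j b_j<\infty$. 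As $0\le a_n\le\delta_n$ for $n\ge1$ (because $\alpha a_{n-1}\ge0$), we conclude $\sum_{n}a_n\le a_0+\sum_{n\ge1}\delta_n<\infty$.

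The only genuine obstacle is the discrete ``change of variables'': the whole argument hinges on recognizing that $a+b<1$ is precisely the condition under which the characteristic root $\rho_+$ lies strictly below $1$, so that a true contraction factor $\rho$ and an accompanying $\alpha>0$ exist; once these constants are identified the remaining manipulations are elementary. It is worth keeping in mind the degenerate case $b=0$ with $a\le0$, where the recursion already reads $a_{n+1}\le b_n$ and summability is immediate --- but the construction above handles it uniformly, since then $\rho_+=\max\{a,0\}$ and the same $\rho,\alpha$ still work.
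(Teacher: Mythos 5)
Your proof is correct. Note that the paper itself does not prove this statement at all: it is quoted as \cref{bot} with a citation to \cite{boct2016inertial}, so there is no in-paper argument to compare against. Your write-up is a sound, self-contained reconstruction of the standard argument behind that cited lemma: you collapse the two-term recursion into the one-step contraction $\delta_{n+1}\le\rho\,\delta_n+b_n$ for $\delta_n=a_n+\alpha a_{n-1}$, which is exactly the mechanism used in the reference (there the auxiliary constants are typically chosen directly from $a+b<1$, e.g.\ some $\alpha$ with $b\le\alpha$ and $a+\alpha<1$, rather than via the characteristic root $\rho_+$, but the two choices are interchangeable). All the delicate points check out: $\rho_+<1$ is indeed equivalent to $a+b<1$ given $b\ge0$; $\rho=\tfrac12(1+\rho_+)$ satisfies $\rho\in(0,1)$, $\alpha=\rho-a>0$ and $b\le\rho\alpha$; the one-step estimate, the geometric unrolling, the interchange of the nonnegative double sum, and the final comparison $a_n\le\delta_n$ are all valid, and your choice of constants handles $a\le 0$ and $b=0$ uniformly. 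The only cosmetic remark is bookkeeping of the first terms ($\delta_1$, $a_0$), which you already account for.
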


\section{Extrapolated DYS method with convergence analysis}\label{sec:alg}

\begin{algorithm}[b!]
\caption{An extrapolated DYS method}\label{alg3.1}
\begin{algorithmic}
\STATE{Choose the parameters $\alpha\geq 0$ and $\gamma>0$. Given ${\bf x}^0$ and ${\bf x}^{-1}={\bf x}^0$, set $k=0$.}
\WHILE{the stopping criteria is not satisfied,}
\STATE{\vspace{-0.5cm}
 \begin{equation}\label{alg:2}
\left\{\begin{aligned}
 {\bf w}^k &= {\bf x}^k+\alpha({\bf x}^k-{\bf x}^{k-1}),\\
 {\bf y}^{k+1} &= {\rm Prox}_{\gamma f_1}\left({\bf w}^k\right), \\
 {\bf z}^{k+1} &= {\rm Prox}_{\gamma f_2}\left(2 {\bf y}^{k+1}-\gamma \nabla h({\bf y}^{k+1})-{\bf w}^k\right), \\
 {\bf x}^{k+1} &= {\bf w}^k+\left({\bf z}^{k+1}-{\bf y}^{k+1}\right) .
\end{aligned}\right.     
 \end{equation}
} \vspace{-0.2cm}
\ENDWHILE
\end{algorithmic}
\end{algorithm}

In this section, we propose a general extrapolated DYS method and conduct the convergence analysis.
\subsection{The extrapolated DYS method}
We propose an extrapolated DYS algorithm to solve the general nonconvex optimization problem \cref{general}, where an extrapolation step is incorporated to accelerate the convergence speed. Note that for any $\gamma>0$, the proximal operator of the function $f$ is defined by 
$$
{\rm Prox}_{\gamma f}({\bf x})= \argmin _{{\bf y} \in \mathbb{R}^n}\left\{f({\bf y})+\frac{1}{2 \gamma}\|{\bf y}-{\bf x}\|^2\right\}. 
$$
 We say that  $f$ is prox-bounded if $f+\frac{1}{2 \gamma}\|\cdot\|^2$ is lower bounded for some $\gamma>0$. The supremum of all such $\gamma$ is the threshold of prox-boundedness of $f$, denoted as $\gamma_f$. If $f$ is lower semicontinuous, then ${\rm Prox}_{\gamma f}$ is nonempty  and compact for all $\gamma \in\left(0, \gamma_f\right)$ \cite[Theorem 1.25]{rockafellar2009variational}.

The concrete iterative scheme is summarized in \cref{alg3.1}, which provides a versatile algorithmic framework
that encompasses both (extrapolated) forward-backward splitting and (extrapolated) Douglas-Rachford 
splitting methods. 
In particular, when the extrapolation step vanishes, i.e., $\alpha=0$, \cref{alg3.1} simplifies to the classical three-operator splitting method studied in \cite{bian2021three, davis2017three}. When $f_1=0$ in \cref{general}, \cref{alg3.1} reduces to the extrapolated (or named inertial) forward-backward splitting method, also known as inertial proximal gradient method, studied in \cite{attouch2014dynamical,lorenz2015inertial,liang2017activity,wu2019general}. \cref{alg3.1} also recovers extrapolated Douglas-Rachford splitting method when $h=0$.

Besides, when $\alpha=0$ and the function $h$ vanishes, \cref{alg3.1} reduces
to the classical DRS algorithm. The convergence of DRS method for nonconvex optimization was first discussed in \cite{li2016douglas}, and then refined in \cite{themelis2020douglas}. Some other variants and extensions of DRS method for nonconvex optimization can refer to \cite{guo2018note,guo2017convergence,li2019convergence,lindstrom2021survey,themelis2022douglas}. 
 When $\alpha=0$ and $f_1$ vanishes, the DYS algorithm becomes another very popular approach, namely, the forward-backward splitting (FBS) or proximal gradient method.
 We refer to \cite{ahookhosh2021bregman,attouch2013convergence,boct2016inertial,themelis2018forward,wu2019general}
 for the extension studies of FBS method in the nonconvex setting.

Next we present some assumptions for problem \cref{general} to facilitate convergence analysis.

\begin{assumption}\label{ass1}
The functions $f_1, f_2$, and $g$ in \cref{general} satisfy the following conditions:
\begin{itemize}
\item[(i)]  $f_1$ has a Lipschitz continuous gradient, i.e., there exists a constant $L_{f_1}>0$ such that 
$$
\left\|\nabla f_1\left({\bf y}_1\right)-\nabla f_1\left({\bf y}_2\right)\right\| \leq L_{f_1}\left\|{\bf y}_1-{\bf y}_2\right\|, \quad \forall ~ {\bf y}_1, {\bf y}_2 \in \mathbb{R}^n.
$$ 
\item[(ii)]  $h$ has a Lipschitz continuous gradient, i.e., there exists a constant $L_h>0$ such that
$$
\left\|\nabla h\left({\bf y}_1\right)-\nabla h\left({\bf y}_2\right)\right\| \leq L_h\left\|{\bf y}_1-{\bf y}_2\right\|, \quad \forall ~ {\bf y}_1, {\bf y}_2 \in \mathbb{R}^n.
$$
\item[(iii)] $f_2: \mathbb{R}^n\rightarrow \mathbb{R}\cup\{\infty\}$ is a proper closed function, and the objective function $F$ is bounded from below.  
\end{itemize}
\end{assumption}

Let $l\in\mathbb{R}$ be a constant such that $f_1 + \frac{l}{2}\|\cdot\|^2$ is convex. It should be noted that the existence of such an $l$ can be guaranteed by the Lipschitz continuity of $\nabla f_1$. Specifically, one can always choose $l = L_{f_1}$. In addition, it follows from the convexity of $f_1 + \frac{l}{2}\|\cdot\|^2$ that
$$f_1({\bf y}_1)-f_1({\bf y}_2)-\langle\nabla f_1({\bf y}_2),{\bf y}_1-{\bf y}_2\rangle\geq-\frac{l}{2}\|{\bf y}_1-{\bf y}_2\|^2,\quad \forall ~ {\bf y}_1, {\bf y}_2 \in \mathbb{R}^n.$$
Then, according to the Lipschitz continuity of $\nabla f_1$ and Lemma \ref{Lem2.2}, it must holds that $l\geq -L_{f_1}$. Hence, there must exist a constant
 $l\in[-L_{f_1},L_{f_1}]$ such that $f_1 + \frac{l}{2}\|\cdot\|^2$ is convex.
Note that $l<0$ implies that $f_1$ is strongly convex. 
Define
\begin{equation}\label{Lambda}
\Lambda(\gamma):=\frac{1-\gamma { l}-2\gamma L_h}{2 +\gamma L_h}- \gamma^2L_{f_1}^2.
\end{equation}
Now we give the parameter conditions for \cref{alg3.1} in the following assumption.
\begin{assumption}\label{ass2}
{ The parameters $\alpha$ and $\gamma$ should be chosen such that  $0<\gamma<\frac{1}{L_{f_1}+L_h}$ and  
$
0\leq\alpha<\Lambda(\gamma).
$}
\end{assumption} 

\begin{remark}
Note that  for given $ L_{f_1}>0$ and $L_h \geq 0, ~\Lambda(\gamma)>0$ always holds if $\gamma>0$ is sufficiently small. { Moreover, for the case of $L_h=0$, i.e., when $h=0$, it is easy to determine that $\Lambda(\gamma)>0$ if the following threshold for $\gamma$ is satisfied:
\begin{equation}\label{gamma_range}
0<\gamma<{ \frac{-l+\sqrt{l^2+8L_{f_1}^2}}{4 L_{f_1}^2}}.
\end{equation}
The above relation implies that $\gamma<\frac{1}{L_{f_1}}$, since the maximum value of the upper bound can be attained when $l = -L_{f_1}$ for every fixed value of $L_{f_1}$.
 Indeed, when $h = 0$ and $\alpha=0$, the extrapolated DYS algorithm \cref{alg:2} reduces to the classical DRS algorithm in \cite{li2016douglas,themelis2020douglas}. In this case, the range of $\gamma$ specified in \cref{gamma_range} is tighter compared to that in \cite{li2016douglas}, particularly in terms of the larger upper bound.
 For $L_h>0$, we can also provide a computable threshold for $\gamma$ to ensure that \cref{ass2} holds, i.e., $0<\gamma<\frac{1}{L_{f_1}+L_h}$ and $\Lambda(\gamma)>0$, as follows:
\begin{equation} \label{gamma_range1}
0<\gamma< \min\left\{\frac{1}{L_{f_1}+L_h},\gamma_0 \right\}, 
\end{equation}}
where $\gamma_0:=\frac{-(2L_h+ L_{f_1}+l)+\sqrt{( 2L_h+ L_{f_1}+l)^2+ 4(L_h L_{f_1}+L_{f_1}^2)}}{2(L_h L_{f_1}+L_{f_1}^2)}.    $
 \end{remark}
 \begin{remark}
When $\alpha=0$, the extrapolated DYS algorithm \cref{alg:2} reduces to the method studied in \cite{bian2021three,liu2019envelope}. {\h However, in this case, the range of $\gamma$ based on \cref{ass2} is different from the result in \cite{bian2021three} for the fixed $L_{f_1}$, $L_h$ and $l$. }
Especially the upper bound of $\gamma$ is different due to the distinct construction of $\Lambda(\gamma)$ in \cref{Lambda}. 
In other words, as a byproduct, this paper provides an improved parameter condition for $\gamma$ to ensure the convergence of the DYS method in the nonconvex setting. In addition, the lower boundedness of the energy function for the DYS method, and a certain sublinear convergence rate are established under some common conditions, which will be detailed later.
\end{remark}

\subsection{Convergence analysis}
In this subsection, we prove the convergence of \cref{alg3.1}, i.e., the extrapolated DYS algorithm, for the general nonconvex optimization problem \cref{general} under \cref{ass1} and \cref{ass2}. 

For convenience, we first present the corresponding first-order optimality conditions for the $\bf y$- and $\bf z$-subproblems in \cref{alg:2}, which will be frequently utilized in the subsequent convergence analysis. Specifically, the optimality condition for $\bf y$-subproblem in \cref{alg:2} is  
\begin{equation}\label{optimality_y}   
  0 = \nabla f_1({\bf y}^{k+1})+\frac{1}{\gamma}\left({\bf y}^{k+1}-{\bf w}^k\right),  
\end{equation}
and that for  $\bf z$-subproblem in \cref{alg:2} is  
\begin{equation}\label{optimality_z}
    0 \in \partial f_2({\bf z}^{k+1})+\frac{1}{\gamma}\left({\bf z}^{k+1}+\gamma \nabla h({\bf y}^{k+1})-2 {\bf y}^{k+1}+{\bf w}^k\right). 
\end{equation}

To simplify the notations in our analysis, we denote  
\begin{equation} \label{uv}
{\bf v}^k = ({\bf y}^k,{\bf z}^k,{\bf x}^k)^\top,\quad {\bf u}^k = ({\bf v}^k,{\bf x}^{k-1},{\bf x}^{k-2})^\top,\quad\forall ~ k\geq 1,
\end{equation}
and
\begin{equation} \label{Delta}
\Delta_{\bf x}^k = {\bf x}^k-{\bf x}^{k-1}, \quad  \Delta_{\bf y}^k = {\bf y}^k-{\bf y}^{k-1},\quad\forall ~ k\geq 1.  
 \end{equation}

Next, for  $\gamma>0$, we  define an auxiliary function $\mathcal{H}_\gamma$ as follows:
\begin{equation}\label{merit1}
\begin{aligned}
\mathcal{H}_\gamma\left({\bf y}, {\bf z},{\bf x}\right)&=  f_1({\bf y})+f_2({\bf z})+h({\bf y})+\frac{1}{2 \gamma}\|2 {\bf y}-{\bf z}-{\bf x}-\gamma \nabla h({\bf y})\|^2 \\
&\quad -\frac{1}{2 \gamma}\left\|{\bf x}-{\bf y}+\gamma \nabla h({\bf y})\right\|^2-\frac{1}{\gamma}\left\|{\bf y}-{\bf z}\right\|^2\\
&=  f_1({\bf y})+f_2({\bf z})+h({\bf y})+\frac{1}{2 \gamma}\left\|{\bf y}-{\bf x}-\gamma \nabla h({\bf y})\right\|^2
-\frac{1}{2 \gamma}\left\|{\bf z}-{\bf x}-\gamma \nabla h({\bf y})\right\|^2,
\end{aligned} 
\end{equation}
{  which is motivated by the DYS envelope studied in \cite{liu2019envelope} and also utilized in \cite{bian2021three}.} 
Based on the definition of $\mathcal{H}_\gamma$, we define the  energy function associated with extrapolated DYS method \cref{alg:2} as follows:  
\begin{equation}\label{merit2}
\begin{aligned}
\Theta_{\alpha,\gamma}\left({\bf y},{\bf z},{\bf x},{\bf x}_1,{\bf x}_2\right)= \mathcal{H}_\gamma({\bf y},{\bf z},{\bf x})+\frac{\alpha^2}{2\gamma}\left\|{\bf x}_1-{\bf x}_2\right\|^2,
\end{aligned} 
\end{equation}
where $\alpha \geq 0$ is a constant parameter that remains consistent with that in \cref{alg3.1}.

We first show that the sequence  $\{\Theta_{\alpha,\gamma} \left({\bf u}^k\right)\}_{k\geq 1}$ is monotonically nonincreasing.

\begin{lemma}\label{lemma_descent}
Suppose that \cref{ass1} and \cref{ass2} hold. Let the sequence $\{({\bf y}^k,{\bf z}^k,{\bf x}^k)\}_{k\geq 1}$ be  generated by \cref{alg:2}, and $\{{\bf u}^k\},
~\{{\bf v}^k\}$ and $\{\Delta_{\bf x}^k\},~\{\Delta_{\bf y}^k\}$ are defined in \cref{uv} and \cref{Delta}, respectively. Then, for a given $\tau\in\left(\alpha,\Lambda(\gamma)\right)$, the sequence  $\{\Theta_{\alpha,\gamma}\left({\bf u}^k\right)\}_{k\geq 1}$ is monotonically nonincreasing. In particular, for any $k\geq 1$, we have 
\begin{equation} \label{descent} 
\begin{aligned}
&\Theta_{\alpha,\gamma}\left({\bf u}^k\right)-\Theta_{\alpha,\gamma}\left({\bf u}^{k+1}\right) \geq  \left(\Lambda(\gamma)-\tau\right)\left(\frac{1}{\gamma}+\frac{L_h}{2}\right)\left\|\Delta_{\bf y}^{k+1}\right\|^2 +\xi(\alpha,\gamma)\left\|\Delta_{\bf x}^k\right\|^2,
\end{aligned}    
\end{equation}
where $\Lambda(\gamma)$ is defined in \cref{Lambda}, and $\xi(\alpha,\gamma):= \frac{\alpha}{\gamma} +\alpha L_h-\frac{\alpha^2L_h}{2}-\frac{\gamma\alpha^2 L_h^2}{2}-\frac{\alpha^2 L_h}{2\tau}-\frac{\alpha^2}{\tau\gamma}>0$.
\end{lemma}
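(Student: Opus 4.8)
The plan is to prove \cref{descent} by expressing both $\mathcal{H}_\gamma({\bf v}^{k+1})$ and $\mathcal{H}_\gamma({\bf v}^{k})$ through the explicit form \cref{merit1}, bounding the resulting difference with the optimality conditions \cref{optimality_y}--\cref{optimality_z}, the (strong) convexity of $f_1+\frac{l}{2}\|\cdot\|^2$, the descent lemma (\Cref{Lem2.2}) applied to $h$, and the prox-optimality of ${\bf z}^{k+1}$, and then using Young's inequality calibrated by the free parameter $\tau\in(\alpha,\Lambda(\gamma))$ together with the telescoping of the $\frac{\alpha^2}{2\gamma}\|{\bf x}_1-{\bf x}_2\|^2$ term in $\Theta_{\alpha,\gamma}$. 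As a first step I would collect the identities produced by \cref{alg:2}: \cref{optimality_y} gives ${\bf w}^k-{\bf y}^{k+1}=\gamma\nabla f_1({\bf y}^{k+1})$; combined with the ${\bf x}$-update this gives ${\bf x}^{k+1}-{\bf z}^{k+1}={\bf w}^k-{\bf y}^{k+1}=\gamma\nabla f_1({\bf y}^{k+1})$, hence also ${\bf x}^{k}-{\bf z}^{k}=\gamma\nabla f_1({\bf y}^{k})$ for $k\geq1$; and since ${\bf w}^k={\bf x}^k+\alpha\Delta_{\bf x}^k$ one obtains the key identity ${\bf y}^{j}-{\bf z}^{j}=\alpha\Delta_{\bf x}^{j-1}-\Delta_{\bf x}^{j}$ for all $j\geq1$, which ties the envelope to the displacements $\Delta_{\bf x}$. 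Substituting ${\bf x}-{\bf z}=\gamma\nabla f_1({\bf y})$ into \cref{merit1} reduces $\mathcal{H}_\gamma$, along the iterates, to $\mathcal{H}_\gamma({\bf v}^j)=f_1({\bf y}^j)+f_2({\bf z}^j)+h({\bf y}^j)+\frac1{2\gamma}\|{\bf y}^j-{\bf z}^j\|^2-\langle{\bf y}^j-{\bf z}^j,\nabla f_1({\bf y}^j)+\nabla h({\bf y}^j)\rangle$, which I would keep as the working expression.

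Next I would lower-bound the three function-value gaps appearing in $\mathcal{H}_\gamma({\bf v}^{k})-\mathcal{H}_\gamma({\bf v}^{k+1})$. For $f_1$: since $\gamma<\frac1{L_{f_1}+L_h}\leq\frac1{L_{f_1}}$ and $l\leq L_{f_1}$ we have $\frac1\gamma>l$, so ${\bf y}^{k+1}={\rm Prox}_{\gamma f_1}({\bf w}^k)$ minimizes the $(\frac1\gamma-l)$-strongly convex function $f_1+\frac1{2\gamma}\|\cdot-{\bf w}^k\|^2$; testing it at ${\bf y}^k$ and using ${\bf w}^k-{\bf y}^{k+1}=\gamma\nabla f_1({\bf y}^{k+1})$ yields $f_1({\bf y}^k)-f_1({\bf y}^{k+1})\geq-\langle\nabla f_1({\bf y}^{k+1}),\Delta_{\bf y}^{k+1}\rangle-\frac l2\|\Delta_{\bf y}^{k+1}\|^2$. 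For $h$: \Cref{Lem2.2} gives $h({\bf y}^k)-h({\bf y}^{k+1})\geq-\langle\nabla h({\bf y}^{k+1}),\Delta_{\bf y}^{k+1}\rangle-\frac{L_h}2\|\Delta_{\bf y}^{k+1}\|^2$. For $f_2$ (only proper closed), I would use the prox inequality $f_2({\bf z}^{k+1})+\frac1{2\gamma}\|{\bf z}^{k+1}-{\bf r}^{k+1}\|^2\leq f_2({\bf z}^{k})+\frac1{2\gamma}\|{\bf z}^{k}-{\bf r}^{k+1}\|^2$ with ${\bf r}^{k+1}=2{\bf y}^{k+1}-\gamma\nabla h({\bf y}^{k+1})-{\bf w}^k$, together with \cref{optimality_z}, to express $f_2({\bf z}^k)-f_2({\bf z}^{k+1})$ in terms of ${\bf z}^{k+1}-{\bf z}^{k}$ and a subgradient of $f_2$ at ${\bf z}^{k+1}$.

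Then I would assemble everything: substitute ${\bf y}^j-{\bf z}^j=\alpha\Delta_{\bf x}^{j-1}-\Delta_{\bf x}^j$ into the quadratic and bilinear parts of the reduced $\mathcal{H}_\gamma$, bound the gradient mismatches by $\|\nabla f_1({\bf y}^{k+1})-\nabla f_1({\bf y}^{k})\|\leq L_{f_1}\|\Delta_{\bf y}^{k+1}\|$ and $\|\nabla h({\bf y}^{k+1})-\nabla h({\bf y}^{k})\|\leq L_h\|\Delta_{\bf y}^{k+1}\|$, and split the cross terms pairing $\Delta_{\bf y}^{k+1}$ (or the gradient differences) with the extrapolation offsets $\alpha\Delta_{\bf x}^{k}$, $\alpha\Delta_{\bf x}^{k-1}$ via Young's inequality with weight $\tau$; finally add the telescoping contribution $\frac{\alpha^2}{2\gamma}(\|\Delta_{\bf x}^{k-1}\|^2-\|\Delta_{\bf x}^{k}\|^2)$ coming from the $\frac{\alpha^2}{2\gamma}\|{\bf x}_1-{\bf x}_2\|^2$ part of $\Theta_{\alpha,\gamma}$. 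After gathering terms, the coefficient of $\|\Delta_{\bf y}^{k+1}\|^2$ condenses — upon completing a square weighted by $\frac1\gamma+\frac{L_h}2=\frac{2+\gamma L_h}{2\gamma}$, which is exactly where the denominator $2+\gamma L_h$ and the correction $\gamma^2L_{f_1}^2$ in the definition \cref{Lambda} of $\Lambda(\gamma)$ enter — into $(\Lambda(\gamma)-\tau)\left(\frac1\gamma+\frac{L_h}2\right)$, while the coefficient of $\|\Delta_{\bf x}^{k}\|^2$ becomes exactly $\xi(\alpha,\gamma)$, giving \cref{descent}.

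It remains to check the two positivity claims. The factor $\Lambda(\gamma)-\tau>0$ is immediate from $\tau<\Lambda(\gamma)$. For $\xi(\alpha,\gamma)>0$ (for $\alpha>0$) I would factor out $\alpha$ and use $\alpha<\tau$ to bound $\frac{\alpha^2}{\tau\gamma}<\frac\alpha\gamma$ and $\frac{\alpha^2L_h}{2\tau}<\frac{\alpha L_h}2$, which reduces the claim to $\frac{L_h}2\bigl(1-\tau(1+\gamma L_h)\bigr)\geq0$ (trivial when $L_h=0$); the inequality $\tau(1+\gamma L_h)<1$ is then forced by $\tau<\Lambda(\gamma)$ together with $\gamma<\frac1{L_{f_1}+L_h}$ and $l\geq-L_{f_1}$. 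I expect the third step — the bookkeeping that makes the contributions involving $\Delta_{\bf x}^{k-1}$ and ${\bf z}^{k+1}-{\bf z}^{k}$ (equivalently $\Delta_{\bf x}^{k+1}$) cancel or get absorbed, and that produces exactly the coefficients $(\Lambda(\gamma)-\tau)(\frac1\gamma+\frac{L_h}2)$ and $\xi(\alpha,\gamma)$ — to be the main obstacle; in particular the admissible range $\alpha<\tau<\Lambda(\gamma)$, i.e., \Cref{ass2}, is dictated precisely by the requirement that the remainder left after each Young split stays nonnegative.
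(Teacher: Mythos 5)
Your preparatory material is correct and coincides with the paper's ingredients: the reduction of $\mathcal{H}_\gamma$ along the iterates using ${\bf x}^j-{\bf z}^j=\gamma\nabla f_1({\bf y}^j)$, the identity ${\bf y}^j-{\bf z}^j=\alpha\Delta_{\bf x}^{j-1}-\Delta_{\bf x}^j$, the $f_1$-inequality (your prox/strong-convexity bound is exactly equivalent to the paper's), the descent lemma for $h$, the $f_2$ prox inequality at the point ${\bf r}^{k+1}=2{\bf y}^{k+1}-\gamma\nabla h({\bf y}^{k+1})-{\bf w}^k$, the Young split with the auxiliary $\tau\in(\alpha,\Lambda(\gamma))$, the telescoping of the $\frac{\alpha^2}{2\gamma}$-term, and (modulo a harmless $\alpha$ versus $\tau$ slip) the positivity check for $\xi(\alpha,\gamma)$. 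However, the step you defer as "bookkeeping" is precisely where the lemma lives, and the recipe you give for it would fail. If one substitutes ${\bf y}^j-{\bf z}^j=\alpha\Delta_{\bf x}^{j-1}-\Delta_{\bf x}^j$ into the bilinear parts and then bounds the gradient mismatches by Lipschitz continuity and Cauchy--Schwarz/Young, the surviving cross terms pair $\Delta_{\bf y}^{k+1}$ with $\Delta_{\bf x}^k$ carrying a \emph{unit} (not $\alpha$-weighted) coefficient of order $\frac{1}{\gamma}$; since the target \cref{descent} provides only $\xi(\alpha,\gamma)=\mathcal{O}(\alpha/\gamma)$ worth of positive $\|\Delta_{\bf x}^k\|^2$, such terms cannot be absorbed. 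Carrying out exactly the moves you describe, one lands at a bound of the form $\mathcal{H}_\gamma({\bf v}^k)-\mathcal{H}_\gamma({\bf v}^{k+1})\geq-\bigl(\tfrac{1}{2\gamma}+\tfrac{l+L_h}{2}\bigr)\|\Delta_{\bf y}^{k+1}\|^2-\tfrac{1}{\gamma}\langle{\bf y}^k-{\bf z}^k,\Delta_{\bf y}^{k+1}\rangle+\langle{\bf y}^k-{\bf z}^k,G^{k+1}-G^k\rangle$ with $G^j=\nabla f_1({\bf y}^j)+\nabla h({\bf y}^j)$, which contains no positive quadratic at all and is useless for \cref{descent}.

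The missing mechanism is the treatment of the mixed difference ${\bf y}^{k+1}-{\bf z}^k$: it must be kept intact and rewritten through the two \emph{consecutive} ${\bf y}$-optimality conditions \cref{optimality_y}, namely ${\bf y}^{k+1}-{\bf z}^k=\gamma\bigl(\nabla f_1({\bf y}^k)-\nabla f_1({\bf y}^{k+1})\bigr)+\alpha\Delta_{\bf x}^k$, \emph{before} its square is expanded (this is the paper's \cref{proof_eq7}). Only with this identity is the penalty $-\bigl(\tfrac{1}{\gamma}+\tfrac{L_h}{2}\bigr)\|{\bf y}^{k+1}-{\bf z}^k\|^2$ of size $\gamma L_{f_1}^2\|\Delta_{\bf y}^{k+1}\|^2$ — the actual source of the $\gamma^2L_{f_1}^2$ correction in \cref{Lambda} — with the $\Delta_{\bf x}$-dependence entering only at order $\alpha$; and only the rewriting of the cross term via ${\bf w}^{k-1}-{\bf w}^k=\alpha({\bf x}^{k-1}-{\bf x}^{k-2})+(1+\alpha)({\bf x}^{k-1}-{\bf x}^{k})$ produces the positive portion $\tfrac{\alpha+\alpha^2}{\gamma}+\alpha L_h+\tfrac{\alpha^2L_h}{2}$ of the $\|\Delta_{\bf x}^k\|^2$ coefficient from which $\xi(\alpha,\gamma)$ is built. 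Your list of identities contains ${\bf x}^j-{\bf z}^j=\gamma\nabla f_1({\bf y}^j)$ but not this mixed-index identity, and the proposal gives no source for the positive part of $\xi$; likewise the cancellation of the $\Delta_{\bf x}^{k+1}$-dependent terms (which happens because the $\tfrac{1}{2\gamma}\|{\bf y}^{k+1}-{\bf z}^{k+1}\|^2$ contributions from the $f_2$ prox inequality and from $\mathcal{H}_\gamma({\bf v}^{k+1})$ cancel exactly) is asserted rather than exhibited. So the overall strategy is the right one, but the decisive cancellation structure is absent, and the natural execution of the written plan does not yield \cref{descent}.
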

\begin{proof}
It follows from \cref{merit1} that  
\begin{equation}\label{proof_eq1}
\begin{aligned}
&\mathcal{H}_\gamma\left({\bf y}^{k+1}, {\bf z}^{k+1}, {\bf x}^k\right)-\mathcal{H}_\gamma\left({\bf y}^{k+1}, {\bf z}^{k+1}, {\bf x}^{k+1}\right) \\
& =\frac{1}{\gamma}\left\langle -\Delta_{\bf x}^{k+1}, {\bf z}^{k+1}-{\bf y}^{k+1}\right\rangle \\
& =-\frac{1}{\gamma}\left\|{\bf y}^{k+1}-{\bf z}^{k+1}\right\|^2-\frac{\alpha}{\gamma}\left\langle {\bf z}^{k+1}-{\bf y}^{k+1},\Delta_{\bf x}^k\right\rangle,
\end{aligned}  
\end{equation}
where the last equality follows from the first and last relations in \cref{alg:2}. Since ${\bf z}^{k+1}$ is a minimizer of the $\bf z$-subproblem according to the third equality in \cref{alg:2}, we have  
\begin{equation*} 
\begin{aligned}
& f_2({\bf z}^k)+\frac{1}{2 \gamma}\left\|2 {\bf y}^{k+1}-{\bf z}^k-{\bf w}^k-\gamma \nabla h({\bf y}^{k+1})\right\|^2  \\
&\geq f_2({\bf z}^{k+1})+\frac{1}{2 \gamma}\left\|2 {\bf y}^{k+1}-{\bf z}^{k+1}-{\bf w}^k-\gamma \nabla h({\bf y}^{k+1})\right\|^2.
\end{aligned} 
\end{equation*}
This together with \cref{merit1}, we have 
\begin{equation}\label{proof_eq2}
\begin{aligned}
& \mathcal{H}_\gamma\left({\bf y}^{k+1}, {\bf z}^k, {\bf x}^k\right)-\mathcal{H}_\gamma\left({\bf y}^{k+1}, {\bf z}^{k+1}, {\bf x}^k\right) \\
&=  f_2({\bf z}^k)+\frac{1}{2 \gamma}\left\|2 {\bf y}^{k+1}-{\bf z}^k-{\bf x}^k-\gamma \nabla h({\bf y}^{k+1})\right\|^2 -\frac{1}{\gamma}\left\|{\bf y}^{k+1}-{\bf z}^k\right\|^2 \\
&\quad-f_2({\bf z}^{k+1})-\frac{1}{2 \gamma}\left\|2 {\bf y}^{k+1}-{\bf z}^{k+1}-{\bf x}^k-\gamma \nabla h({\bf y}^{k+1})\right\|^2 +\frac{1}{\gamma}\left\|{\bf y}^{k+1}-{\bf z}^{k+1}\right\|^2 \\
& \geq \frac{1}{\gamma}\left\|{\bf y}^{k+1}-{\bf z}^{k+1}\right\|^2-\frac{1}{\gamma}\left\|{\bf y}^{k+1}-{\bf z}^k\right\|^2 
+\frac{1}{\gamma}\left\langle {\bf z}^{k+1}-{\bf z}^k,{\bf w}^k-{\bf x}^k\right\rangle\\
&= \frac{1}{\gamma}\left\|{\bf y}^{k+1}-{\bf z}^{k+1}\right\|^2-\frac{1}{\gamma}\left\|{\bf y}^{k+1}-{\bf z}^k\right\|^2 +\frac{\alpha}{\gamma}\left\langle {\bf z}^{k+1}-{\bf z}^k,\Delta_{\bf x}^k\right\rangle,
\end{aligned} 
\end{equation}
where  the last equality follows from the relation {  ${\bf w}^k={\bf x}^k+\alpha \Delta_{\bf x}^k$} in \cref{alg:2}. 
Since $f_1+\frac{1}{2 \gamma}\left\|{\bf w}^k-\cdot\right\|^2$ is a strongly convex function with modulus { $\frac{1}{\gamma}-l$}, { and recall the optimality condition $0 = \nabla f_1({\bf y}^{k+1})+\frac{1}{\gamma}\left({\bf y}^{k+1}-{\bf w}^k\right)$ for the $\bf y$-subproblem in \cref{optimality_y},} we obtain  
$$f_1({\bf y}^k)+\frac{1}{2 \gamma}\left\|{\bf y}^k-{\bf w}^k\right\|^2 \geq 
f_1({\bf y}^{k+1})+\frac{1}{2 \gamma}\left\|{\bf y}^{k+1}-{\bf w}^k\right\|^2+\frac{1}{2}\left(\frac{1}{\gamma}-{ l}\right)\left\|\Delta_{\bf y}^{k+1}\right\|^2. $$
This implies that 
$$\begin{aligned}
&f_1 ({\bf y}^k )+\frac{1}{2 \gamma}\left\|{\bf y}^k-{\bf x}^k\right\|^2 \\
&\geq f_1 ({\bf y}^{k+1} )+\frac{1}{2 \gamma}\left\|{\bf y}^{k+1}-{\bf x}^k\right\|^2 -\frac{\alpha}{\gamma}\left\langle \Delta_{\bf y}^{k+1},\Delta_{\bf x}^k\right\rangle+\frac{1}{2}\left(\frac{1}{\gamma}-{ l}\right)\left\|\Delta_{\bf y}^{k+1}\right\|^2.
\end{aligned} $$
Therefore, it follows from \cref{merit1} that
\begin{equation*} 
\begin{aligned}
& \mathcal{H}_\gamma\left({\bf y}^k, {\bf z}^k, {\bf x}^k\right)-\mathcal{H}_\gamma\left({\bf y}^{k+1}, {\bf z}^k, {\bf x}^k\right) \\
& = f_1({\bf y}^k)+h({\bf y}^k)+\frac{1}{2 \gamma}\left\|{\bf y}^k-{\bf x}^k-\gamma \nabla h({\bf y}^k)\right\|^2-\frac{1}{2 \gamma}\left\|{\bf z}^k-{\bf x}^k-\gamma \nabla h({\bf y}^k)\right\|^2 \\
&\quad -f_1({\bf y}^{k+1})-h({\bf y}^{k+1})-\frac{1}{2 \gamma}\left\|{\bf y}^{k+1}-{\bf x}^k-\gamma \nabla h({\bf y}^{k+1})\right\|^2+\frac{1}{2 \gamma}\left\|{\bf z}^k-{\bf x}^k-\gamma \nabla h({\bf y}^{k+1})\right\|^2 \\
& \geq h({\bf y}^k)-\left\langle {\bf y}^k-{\bf x}^k,\nabla h({\bf y}^k)\right\rangle+\frac{\gamma}{2}\left\|\nabla h({\bf y}^k)\right\|^2-\frac{1}{2 \gamma}\left\|{\bf z}^k-{\bf x}^k-\gamma \nabla h({\bf y}^k)\right\|^2\\
&\quad -h({\bf y}^{k+1})+\left\langle {\bf y}^{k+1}-{\bf x}^k, \nabla h({\bf y}^{k+1})\right\rangle-\frac{\gamma}{2}\left\|\nabla h({\bf y}^{k+1})\right\|^2+\frac{1}{2 \gamma}\left\|{\bf z}^k-{\bf x}^k-\gamma \nabla h({\bf y}^{k+1})\right\|^2 \\
&\quad +\frac{1}{2}\left(\frac{1}{\gamma}-{ l}\right)\left\|\Delta_{\bf y}^{k+1}\right\|^2-\frac{\alpha}{\gamma}\left\langle \Delta_{\bf y}^{k+1},\Delta_{\bf x}^k\right\rangle.
\end{aligned}
\end{equation*}
Then, expanding the squares and combining the terms in the right-hand side of the above inequality, we have
\begin{equation}\label{proof_eq3}
\begin{aligned}
& \mathcal{H}_\gamma\left({\bf y}^k, {\bf z}^k, {\bf x}^k\right)-\mathcal{H}_\gamma\left({\bf y}^{k+1}, {\bf z}^k, {\bf x}^k\right) \\
 &\geq h({\bf y}^k)+\left\langle {\bf z}^k-{\bf y}^k,\nabla h({\bf y}^k)\right\rangle-\frac{1}{2 \gamma}\left\|{\bf z}^k-{\bf x}^k\right\|^2
  -h({\bf y}^{k+1})+\left\langle {\bf y}^{k+1}-{\bf z}^k, \nabla h({\bf y}^{k+1})\right\rangle\\
 &\quad+\frac{1}{2 \gamma}\left\|{\bf z}^k-{\bf x}^k\right\|^2+\frac{1}{2}\left(\frac{1}{\gamma}-{ l}\right)\left\|\Delta_{\bf y}^{k+1}\right\|^2-\frac{\alpha}{\gamma}\left\langle \Delta_{\bf y}^{k+1},\Delta_{\bf x}^k\right\rangle\\
 &=  h({\bf y}^k)+\left\langle {\bf z}^k-{\bf y}^k,\nabla h({\bf y}^k)\right\rangle  -h({\bf y}^{k+1})-\left\langle {\bf z}^k-{\bf y}^{k+1}, \nabla h({\bf y}^{k+1})\right\rangle\\
 &\quad + \frac{1}{2}\left(\frac{1}{\gamma}-{ l}\right)\left\|\Delta_{\bf y}^{k+1}\right\|^2-\frac{\alpha}{\gamma}\left\langle \Delta_{\bf y}^{k+1},\Delta_{\bf x}^k\right\rangle,
\end{aligned}
\end{equation}
Next, according to \cref{Lem2.2} and the $L_h$-Lipschitz continuity of $\nabla h$, we have 
\begin{equation}\label{proof_eq4}
\begin{aligned}
&  h ({\bf y}^k)-h({\bf y}^{k+1})+\left\langle {\bf z}^k-{\bf y}^k,\nabla h({\bf y}^k)\right\rangle-\left\langle {\bf z}^k-{\bf y}^{k+1},\nabla h({\bf y}^{k+1})\right\rangle \\
& =h({\bf y}^k)-h({\bf y}^{k+1})+\left\langle \Delta_{\bf y}^{k+1},\nabla h({\bf y}^k)\right\rangle-\left\langle {\bf z}^k-{\bf y}^{k+1},\nabla h({\bf y}^{k+1})-\nabla h({\bf y}^k)\right\rangle \\
& \geq-\frac{L_h}{2}\left\|\Delta_{\bf y}^{k+1}\right\|^2-\left\langle {\bf z}^k-{\bf y}^{k+1},\nabla h({\bf y}^{k+1})-\nabla h({\bf y}^k)\right\rangle \\
& \geq -\frac{L_h}{2}\left\|\Delta_{\bf y}^{k+1}\right\|^2-\frac{L_h}{2}\left\|{\bf y}^{k+1}-{\bf z}^k\right\|^2-\frac{L_h}{2}\left\|\Delta_{\bf y}^{k+1}\right\|^2 .
\end{aligned} 
\end{equation}
Substituting \cref{proof_eq4} into \cref{proof_eq3}, we obtain
\begin{equation}\label{proof_eq5}
\begin{aligned}
& \mathcal{H}_\gamma\left({\bf y}^k, {\bf z}^k, {\bf x}^k\right)-\mathcal{H}_\gamma\left({\bf y}^{k+1}, {\bf z}^k, {\bf x}^k\right) \\
&\geq \frac{1}{2}\left(\frac{1}{\gamma}-{ l}\right)\left\|\Delta_{\bf y}^{k+1}\right\|^2-\frac{\alpha}{\gamma}\left\langle \Delta_{\bf y}^{k+1},\Delta_{\bf x}^k\right\rangle-L_h\left\|\Delta_{\bf y}^{k+1}\right\|^2-\frac{L_h}{2}\left\|{\bf y}^{k+1}-{\bf z}^k\right\|^2.
\end{aligned}
\end{equation}
 Summing \cref{proof_eq1}, \cref{proof_eq2} and \cref{proof_eq5} yields
\begin{equation}\label{proof_eq6}
\begin{aligned}
&\mathcal{H}_\gamma\left({\bf y}^k, {\bf z}^k, {\bf x}^k\right)-\mathcal{H}_\gamma\left({\bf y}^{k+1}, {\bf z}^{k+1}, {\bf x}^{k+1}\right) \\
&\geq 
\frac{1-\gamma { l}-2\gamma L_h}{2 \gamma}\left\|\Delta_{\bf y}^{k+1}\right\|^2-\left(\frac{1}{\gamma}+\frac{L_h}{2}\right)\left\|{\bf y}^{k+1}-{\bf z}^k\right\|^2+\frac{\alpha}{\gamma}\left\langle {\bf y}^k-{\bf z}^k,\Delta_{\bf x}^k\right\rangle \\
&=\frac{1-\gamma { l}-2\gamma L_h}{2 \gamma}\left\|\Delta_{\bf y}^{k+1}\right\|^2-\left(\frac{1}{\gamma}+\frac{L_h}{2}\right)\left\|{\bf y}^{k+1}-{\bf z}^k\right\|^2\\
&\quad-\frac{\alpha}{\gamma}\left\|\Delta_{\bf x}^k\right\|^2+\frac{\alpha^2}{\gamma}\left\langle \Delta_{\bf x}^{k-1},\Delta_{\bf x}^k\right\rangle,
\end{aligned}   
\end{equation}
where the last equality holds due to the fact ${\bf y}^k-{\bf z}^k={\bf w}^{k-1}-{\bf x}^k={\bf x}^{k-1}-{\bf x}^k+\alpha({\bf x}^{k-1}-{\bf x}^{k-2})$ by \cref{alg:2}. Our further aim is to analyze the negative term $\|{\bf y}^{k+1}-{\bf z}^k\|^2$. It follows from the second equality in \cref{alg:2} that
$ 0=\nabla f_1 ({\bf y}^{k+1} )+\frac{1}{\gamma} ({\bf y}^{k+1}-{\bf w}^k)$.
Further, we obtain
\begin{equation}\label{proof_eq7}
\begin{aligned}
&\left\|{\bf y}^{k+1}-{\bf z}^k\right\|^2 \\
& =\left\|{\bf y}^{k+1}-\left({\bf x}^k-{\bf w}^{k-1}+{\bf y}^k\right)\right\|^2\\
&=\left\|({\bf y}^{k+1}-{\bf w}^k)-({\bf y}^k-{\bf w}^{k-1})+({\bf w}^k-{\bf x}^k)\right\|^2 \\
& =\gamma^2\left\|\nabla f_1({\bf y}^k)-\nabla f_1({\bf y}^{k+1})\right\|^2+\|{\bf w}^k-{\bf x}^k\|^2 + 2\langle ({\bf y}^{k+1}-{\bf w}^k)-({\bf y}^k-{\bf w}^{k-1}),{\bf w}^k-{\bf x}^k \rangle\\
&\leq \gamma^2 L_{f_1}^2\left\|\Delta_{\bf y}^{k+1}\right\|^2+\alpha^2\left\|\Delta_{\bf x}^k\right\|^2 +2\alpha\left\langle ({\bf y}^{k+1}-{\bf w}^k)-({\bf y}^k-{\bf w}^{k-1}),\Delta_{\bf x}^k \right\rangle\\
&=\gamma^2 L_{f_1}^2\left\|\Delta_{\bf y}^{k+1}\right\|^2+2\alpha\left\langle \Delta_{\bf y}^{k+1},\Delta_{\bf x}^k \right\rangle
+2\alpha^2\left\langle \Delta_{\bf x}^{k-1},\Delta_{\bf x}^k\right\rangle-\alpha(2+\alpha)\left\|\Delta_{\bf x}^k\right\|^2,
\end{aligned}
\end{equation}
where the last equality follows from the relation ${\bf w}^{k-1}-{\bf w}^k=\alpha({\bf x}^{k-1}-{\bf x}^{k-2})+(1+\alpha)({\bf x}^{k-1}-{\bf x}^k)$ by \cref{alg:2}.
Substituting \cref{proof_eq7} into \cref{proof_eq6} yields  
\begin{equation} \label{proof_eq8}
\begin{aligned}
&\mathcal{H}_\gamma\left({\bf y}^k, {\bf z}^k, {\bf x}^k\right)-\mathcal{H}_\gamma\left({\bf y}^{k+1}, {\bf z}^{k+1}, {\bf x}^{k+1}\right) \\
&\geq  \left(\frac{1-\gamma { l}-2\gamma L_h}{2 \gamma}-\left(\frac{1}{\gamma}+\frac{L_h}{2}\right)\gamma^2L_{f_1}^2\right)\left\|\Delta_{\bf y}^{k+1}\right\|^2 \\
&\quad +\left(\frac{\alpha+\alpha^2}{\gamma}+\alpha L_h+\frac{\alpha^2L_h}{2}\right) \left\|\Delta_{\bf x}^k\right\|^2\\
&\quad-\left(\frac{2}{\gamma}+ L_h\right)\alpha\left\langle \Delta_{\bf y}^{k+1},\Delta_{\bf x}^k\right\rangle -\left(\frac{1}{\gamma}+L_h\right)\alpha^2\left\langle \Delta_{\bf x}^{k-1},\Delta_{\bf x}^k \right\rangle.
\end{aligned}   
\end{equation}
Note that for any { $\tau>0$}, it holds that
\begin{equation*}
 \alpha\left\langle \Delta_{\bf y}^{k+1},\Delta_{\bf x}^k\right\rangle\leq \frac{\tau}{2}\left\|\Delta_{\bf y}^{k+1}\right\|^2+\frac{\alpha^2}{2\tau}\left\|\Delta_{\bf x}^k\right\|^2,    
\end{equation*}
and
\begin{equation*}\begin{aligned}
 &\left(\frac{1}{\gamma}+L_h\right)\alpha^2\left\langle \Delta_{\bf x}^{k-1},\Delta_{\bf x}^k\right\rangle \leq \frac{\alpha^2}{2\gamma}\left\|\Delta_{\bf x}^{k-1}\right\|^2+\frac{\gamma\alpha^2}{2}\left(\frac{1}{\gamma}+L_h\right)^2\left\|\Delta_{\bf x}^k\right\|^2.   
\end{aligned}\end{equation*}
Substituting the above inequalities into \cref{proof_eq8}, we get
\begin{equation}  \label{proof_eq9}
\begin{aligned}
&\mathcal{H}_\gamma\left({\bf y}^k, {\bf z}^k, {\bf x}^k\right)-\mathcal{H}_\gamma\left({\bf y}^{k+1}, {\bf z}^{k+1}, {\bf x}^{k+1}\right) \\
&\geq  \left(\Lambda(\gamma)-\tau\right)\left(\frac{1}{\gamma}+\frac{L_h}{2}\right)\left\|\Delta_{\bf y}^{k+1}\right\|^2-\frac{\alpha^2}{2\gamma}\left\| \Delta_{\bf x}^{k-1}\right\|^2 \\
&\quad +\left(\frac{\alpha}{\gamma}+\frac{\alpha^2}{2\gamma}+\alpha L_h-\frac{\alpha^2L_h}{2}-\frac{\gamma\alpha^2 L_h^2}{2}-\frac{\alpha^2 L_h}{2\tau}-\frac{\alpha^2}{\tau\gamma}\right) \left\|\Delta_{\bf x}^k\right\|^2,
\end{aligned}   
\end{equation}
where $\Lambda(\gamma)$ is defined in \cref{Lambda} and $\tau$ is an auxiliary parameter assumed to satisfy $\alpha<\tau<\Lambda(\gamma)$, which must exist according to  \cref{ass2}. Then, according to the definition of $\Theta_{\alpha,\gamma}$ in \cref{merit2} and \cref{proof_eq9}, the conclusion \cref{descent} can be obtained directly.

Now we show that $\xi(\alpha,\gamma):= \frac{\alpha}{\gamma} +\alpha L_h-\frac{\alpha^2L_h}{2}-\frac{\gamma\alpha^2 L_h^2}{2}-\frac{\alpha^2 L_h}{2\tau}-\frac{\alpha^2}{\tau\gamma}>0$. Since $\tau>\alpha$, we can easily obtain that $\frac{\alpha}{\gamma}-\frac{\alpha^2}{\tau\gamma}>0$. It leaves to show 
$\alpha L_h-\frac{\alpha^2L_h}{2}-\frac{\gamma\alpha^2 L_h^2}{2}-\frac{\alpha^2 L_h}{2\tau}>0$. According to \cref{ass2}, we know that
\begin{equation} 
\alpha<\Lambda(\gamma)<\frac{1-\gamma { l}-2\gamma L_h}{2 +\gamma L_h}<\frac{1}{1+\gamma L_h} .
\end{equation}
This together with $\tau>\alpha$, we have
$\alpha L_h-\frac{\alpha^2L_h}{2}-\frac{\gamma\alpha^2 L_h^2}{2}-\frac{\alpha^2 L_h}{2\tau}>\alpha L_h-\frac{\alpha^2L_h}{2}-\frac{\gamma\alpha^2 L_h^2}{2}-\frac{\alpha^2 L_h}{2\alpha}=\frac{\alpha L_h}{2}(1-\alpha-\alpha\gamma L_h)>0$.
This completes the proof.
\end{proof}

{\h The following lemma presents that the sequences $\{\Delta_{\bf x}^k\}$, $\{\Delta_{\bf y}^k\}$ and $\{{\bf y}^k-{\bf z}^k\}$  vanish with certain sublinear convergence rate.
\begin{lemma}\label{lemma_descent_add}
Suppose that \cref{ass1} and \cref{ass2} hold. Let  the sequence  $\{({\bf y}^k,{\bf z}^k,{\bf x}^k)\}_{k\geq 1}$ be generated by \cref{alg:2} which is assumed to be bounded, and the sequences $\{{\bf u}^k\},
~\{{\bf v}^k\}$ are defined in \cref{uv}, respectively. Then, 
\begin{itemize}
\item[(i)] it holds that $\sum_{k=0}^\infty\|\Delta_{\bf x}^k\|^2<+\infty$ and $\sum_{k=0}^\infty\|\Delta_{\bf y}^k\|^2<+\infty$. Furthermore, we have $\lim_{k\rightarrow \infty}\|\Delta_{\bf x}^k\|=0$, $\lim_{k\rightarrow \infty}\|\Delta_{\bf y}^k\|=0$, and $\lim_{k\rightarrow \infty}\|{\bf y}^k-{\bf z}^k\|=0$. 
\item[(ii)] { it holds that $\min_{k\leq K}\|\Delta_{\bf x}^k\|=\mathcal{O}(\frac{1}{\sqrt{K}})$, $\min_{k\leq K}\|\Delta_{\bf y}^k\|=\mathcal{O}(\frac{1}{\sqrt{K}})$, and $\min_{k\leq K}\|{\bf y}^k-{\bf z}^k\|=\mathcal{O}(\frac{1}{\sqrt{K}})$.}
\end{itemize}
\end{lemma}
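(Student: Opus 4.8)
The plan is to leverage the descent inequality \cref{descent} established in \cref{lemma_descent}. Since $0\le\alpha<\Lambda(\gamma)$ by \cref{ass2}, we may fix $\tau\in(\alpha,\Lambda(\gamma))$, so that \cref{descent} holds with both coefficients $(\Lambda(\gamma)-\tau)\big(\tfrac1\gamma+\tfrac{L_h}{2}\big)>0$ and $\xi(\alpha,\gamma)>0$; in particular $\{\Theta_{\alpha,\gamma}({\bf u}^k)\}_{k\ge1}$ is nonincreasing. The first substantive step is to show that this sequence is bounded below. Because $\{({\bf y}^k,{\bf z}^k,{\bf x}^k)\}_{k\ge1}$ is assumed bounded, it lies in a compact set on which $f_1$, $h$, $\nabla h$ are continuous, hence the smooth terms and the two quadratic terms in $\mathcal{H}_\gamma$ (see \cref{merit1}) are bounded along the iterates; moreover the closure of $\{{\bf z}^k\}_{k\ge1}$ is compact and $f_2$, being proper and lower semicontinuous, attains a finite minimum on it, so $f_2({\bf z}^k)$ is bounded below. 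Together with $\tfrac{\alpha^2}{2\gamma}\|\Delta_{\bf x}^{k-1}\|^2\ge0$ from \cref{merit2}, this shows $\{\Theta_{\alpha,\gamma}({\bf u}^k)\}_{k\ge1}$ is bounded below, hence convergent; call its limit $\Theta^*$ and set $M:=\Theta_{\alpha,\gamma}({\bf u}^1)-\Theta^*<+\infty$.

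Summing \cref{descent} over $k=1,\dots,K$ telescopes to
\begin{equation*}
\begin{aligned}
\sum_{k=1}^{K}\Big[(\Lambda(\gamma)-\tau)\big(\tfrac1\gamma+\tfrac{L_h}{2}\big)\|\Delta_{\bf y}^{k+1}\|^2+\xi(\alpha,\gamma)\|\Delta_{\bf x}^k\|^2\Big]
&\le\Theta_{\alpha,\gamma}({\bf u}^1)-\Theta_{\alpha,\gamma}({\bf u}^{K+1})\\
&\le M.
\end{aligned}
\end{equation*}
Letting $K\to\infty$ and using positivity of the coefficients yields $\sum_k\|\Delta_{\bf x}^k\|^2<+\infty$ and $\sum_k\|\Delta_{\bf y}^k\|^2<+\infty$ (finitely many initial terms are harmless, e.g. $\Delta_{\bf x}^0=0$ by the initialization ${\bf x}^{-1}={\bf x}^0$), whence $\|\Delta_{\bf x}^k\|\to0$ and $\|\Delta_{\bf y}^k\|\to0$. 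For the third limit in (i), the last two relations of \cref{alg:2} together with ${\bf w}^k={\bf x}^k+\alpha\Delta_{\bf x}^k$ give ${\bf z}^{k}-{\bf y}^{k}=\Delta_{\bf x}^{k}-\alpha\Delta_{\bf x}^{k-1}$, so $\|{\bf y}^k-{\bf z}^k\|\le\|\Delta_{\bf x}^k\|+\alpha\|\Delta_{\bf x}^{k-1}\|\to0$.

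For part (ii), the telescoped inequality gives $\xi(\alpha,\gamma)\sum_{k=1}^{K}\|\Delta_{\bf x}^k\|^2\le M$, so
\begin{equation*}
\min_{1\le k\le K}\|\Delta_{\bf x}^k\|^2\le\frac1K\sum_{k=1}^{K}\|\Delta_{\bf x}^k\|^2\le\frac{M}{\xi(\alpha,\gamma)\,K},
\end{equation*}
giving $\min_{k\le K}\|\Delta_{\bf x}^k\|=\mathcal{O}(1/\sqrt{K})$; the identical argument applied to the coefficient of $\|\Delta_{\bf y}^{k+1}\|^2$ gives $\min_{k\le K}\|\Delta_{\bf y}^k\|=\mathcal{O}(1/\sqrt{K})$. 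Finally, $\|{\bf y}^k-{\bf z}^k\|^2\le2\|\Delta_{\bf x}^k\|^2+2\alpha^2\|\Delta_{\bf x}^{k-1}\|^2$, so $\sum_{k=1}^{K}\|{\bf y}^k-{\bf z}^k\|^2$ is bounded by a constant multiple of $M$ independent of $K$, and the same averaging argument gives $\min_{k\le K}\|{\bf y}^k-{\bf z}^k\|=\mathcal{O}(1/\sqrt{K})$.

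I expect the one genuinely delicate point to be the lower-boundedness of $\{\Theta_{\alpha,\gamma}({\bf u}^k)\}$: the envelope-type function $\mathcal{H}_\gamma$ contains the negative term $-\tfrac1{2\gamma}\|{\bf z}-{\bf x}-\gamma\nabla h({\bf y})\|^2$ and is not globally bounded below, so the boundedness hypothesis on the iterates is exactly what is needed to upgrade ``$\Theta_{\alpha,\gamma}$ nonincreasing'' to ``$\Theta_{\alpha,\gamma}$ convergent'' and thereby make the telescoping effective; the remaining steps are routine manipulations of summable nonnegative sequences.
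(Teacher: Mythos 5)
Your proof is correct, and its skeleton (fix $\tau\in(\alpha,\Lambda(\gamma))$, telescope \cref{descent}, then use the averaging bound $K\min_{k\le K}\|\Delta_{\bf x}^k\|^2\le\sum_{k\le K}\|\Delta_{\bf x}^k\|^2$ together with ${\bf z}^k-{\bf y}^k=\Delta_{\bf x}^k-\alpha\Delta_{\bf x}^{k-1}$) is exactly the paper's. The one place where you genuinely diverge is the lower-boundedness of $\{\Theta_{\alpha,\gamma}({\bf u}^k)\}$, which you correctly identify as the delicate point. You obtain it by a compactness argument: the bounded iterates keep $f_1$, $h$, $\nabla h$ and the quadratic terms of $\mathcal{H}_\gamma$ bounded, and $f_2$ is bounded below on the compact closure of $\{{\bf z}^k\}$ because it is proper and lower semicontinuous; this is valid and elementary, but it leans entirely on the boundedness hypothesis and gives no information about the value of the bound. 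The paper instead derives the structural inequality
\begin{equation*}
\Theta_{\alpha,\gamma}({\bf u}^k)\;\ge\; F({\bf z}^k)+\Bigl(\tfrac{1}{2\gamma}-\tfrac{L_{f_1}+L_h}{2}\Bigr)\bigl\|{\bf y}^k-{\bf z}^k\bigr\|^2,
\end{equation*}
by applying the descent-type estimates for $f_1$ and $h$ between ${\bf y}^k$ and ${\bf z}^k$ together with the optimality condition \cref{optimality_y} and the update ${\bf x}^k={\bf w}^{k-1}+({\bf z}^k-{\bf y}^k)$; lower-boundedness then follows from $\gamma<\frac{1}{L_{f_1}+L_h}$ and \cref{ass1}(iii). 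That version costs a short computation but buys a quantitative link between the energy $\Theta_{\alpha,\gamma}$ and the objective $F$ (used in the remark following the lemma and consistent with the envelope viewpoint of \cite{liu2019envelope}), and it makes the role of the stepsize condition $\gamma<\frac{1}{L_{f_1}+L_h}$ explicit, whereas your argument never needs that condition. Either route suffices for the lemma as stated.
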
}
\begin{proof}
We now prove (i). 
We first show that $\Theta_{\alpha,\gamma}\left({\bf u}^k\right)$ is lower bounded for all $k$. It follows from the definition of $\Theta_{\alpha,\gamma}$ in \cref{merit2} that
\begin{equation}  \label{rev1}
\begin{aligned}
\Theta_{\alpha,\gamma}\left({\bf u}^k\right)
&= \mathcal{H}_\gamma\left({\bf y}^k, {\bf z}^k, {\bf x}^k\right) +\frac{\alpha}{2\gamma}\|\Delta_{\bf x}^{k-1}\|^2\\
&= f_1({\bf y}^k)+f_2({\bf z}^k)+h({\bf y}^k)+\frac{1}{2 \gamma}\|2 {\bf y}^k-{\bf z}^k-{\bf x}^k-\gamma \nabla h({\bf y}^k)\|^2 \\
&\quad -\frac{1}{2 \gamma}\left\|{\bf x}^k-{\bf y}^k+\gamma \nabla h({\bf y}^k)\right\|^2-\frac{1}{\gamma}\left\|{\bf y}^k-{\bf z}^k\right\|^2+\frac{\alpha}{2\gamma}\|\Delta_{\bf x}^{k-1}\|^2.
\end{aligned}   
\end{equation}
Since $\nabla f_1$ and $\nabla h$ are both Lipschitz continuous with moduli $L_{f_1}$ and $L_h$, then
\begin{equation*}
f_1({\bf y}^k)\geq f_1({\bf z}^k)-\langle\nabla f_1({\bf y}^k),{\bf z}^k-{\bf y}^k\rangle-\frac{L_{f_1}}{2}\|{\bf y}^k-{\bf z}^k\|^2,
\end{equation*}
and 
\begin{equation*}
h({\bf y}^k)\geq h({\bf z}^k)-\langle\nabla h({\bf y}^k),{\bf z}^k-{\bf y}^k\rangle-\frac{L_h}{2}\|{\bf y}^k-{\bf z}^k\|^2.
\end{equation*}
Substituting them into \cref{rev1}, and togethering with $\nabla f_1({\bf y}^{k})=-\frac{1}{\gamma}\left({\bf y}^{k}-{\bf w}^{k-1}\right) $ from \cref{optimality_y}, we have 
\begin{equation}  \label{rev2}
\begin{aligned}
\Theta_{\alpha,\gamma}\left({\bf u}^k\right)
&\geq f_1({\bf z}^k)+f_2({\bf z}^k)+h({\bf z}^k)+\left(\frac{1}{2\gamma}-\frac{L_{f_1}+L_h}{2}\right)\left\|{\bf y}^k-{\bf z}^k\right\|^2 \\
&\quad -\frac{1}{\gamma}\left\langle {\bf x}^k-{\bf w}^{k-1},{\bf y}^k-{\bf z}^k\right\rangle-\frac{1}{\gamma}\left\|{\bf y}^k-{\bf z}^k\right\|^2+\frac{\alpha}{2\gamma}\|\Delta_{\bf x}^{k-1}\|^2\\
&\geq F({\bf z}^k)+\left(\frac{1}{2\gamma}-\frac{L_{f_1}+L_h}{2}\right)\left\|{\bf y}^k-{\bf z}^k\right\|^2,
\end{aligned}   
\end{equation}
where the first inequality follows from $\frac{1}{2 \gamma}\|2 {\bf y}^k-{\bf z}^k-{\bf x}^k-\gamma \nabla h({\bf y}^k)\|^2 =\frac{1}{2\gamma}\left\|{\bf y}^k-{\bf z}^k\right\|^2+\frac{1}{\gamma}\left\langle {\bf y}^k-{\bf x}^k,{\bf y}^k-{\bf z}^k\right\rangle-\langle{\bf y}^k-{\bf z}^k,\nabla h({\bf y}^k)\rangle
+\frac{1}{2 \gamma}\left\|{\bf x}^k-{\bf y}^k+\gamma \nabla h({\bf y}^k)\right\|^2$, and the second one follows from $\frac{\alpha}{2\gamma}\geq 0$ and $ {\bf x}^{k} = {\bf w}^{k-1}+\left({\bf z}^{k}-{\bf y}^{k}\right)$ by \cref{alg:2}.
{\h This implies that $\Theta_{\alpha,\gamma}\left({\bf u}^k\right)$ for all $k\geq 1$ is bounded from below due to the fact that $0<\gamma<\frac{1}{L_{f_1}+L_h}$ and the boundedness of $F$ and $\left\{{\bf u}^k\right\}_{k\geq 1}$.}
Summing \cref{descent} from $k = 1$ to $N-1\geq 0$, we get
\begin{equation}  \label{proof_eq10}
\begin{aligned}
&\Theta_{\alpha,\gamma}\left({\bf u}^1\right)-\Theta_{\alpha,\gamma}\left({\bf u}^{N}\right) \geq  \left(\Lambda(\gamma)-\tau\right)\left(\frac{1}{\gamma}+\frac{L_h}{2}\right)\sum_{k=2}^N\left\|\Delta_{\bf y}^{k}\right\|^2 +\xi(\alpha,\gamma)\sum_{k=1}^{N-1}\left\|\Delta_{\bf x}^k\right\|^2.
\end{aligned}   
\end{equation}
 Therefore, letting $N\rightarrow +\infty$ and following the lower boundedness of  $\{\Theta_{\alpha,\gamma}\left({\bf u}^k\right)\}_{k\geq 1}$, we have
\begin{equation}  
\begin{aligned}
&  \left(\Lambda(\gamma)-\tau\right)\left(\frac{1}{\gamma}+\frac{L_h}{2}\right)\sum_{k=2}^\infty\left\|\Delta_{\bf y}^{k}\right\|^2 +\xi(\alpha,\gamma)\sum_{k=1}^\infty\left\|\Delta_{\bf x}^k\right\|^2
<+\infty.
\end{aligned}   
\end{equation}
This implies that
$\sum_{k=0}^\infty\|\Delta_{\bf x}^k\|^2<+\infty$ and $\sum_{k=0}^\infty\|\Delta_{\bf y}^k\|^2<+\infty$. Therefore, it holds that $\lim_{k\rightarrow \infty}\|\Delta_{\bf x}^k\|=0$ and $\lim_{k\rightarrow \infty}\|\Delta_{\bf y}^k\|=0$. Since  ${\bf y}^k-{\bf z}^k={\bf w}^{k-1}-{\bf x}^k={\bf x}^{k-1}-{\bf x}^k+\alpha({\bf x}^{k-1}-{\bf x}^{k-2})$ from \cref{alg:2}, we further have $\lim_{k\rightarrow \infty}\|{\bf y}^k-{\bf z}^k\|=0$. 

We turn to prove (ii). According to \cref{proof_eq10} and recalling $\xi(\alpha,\gamma)>0$ and the lower boundedness of $\{\Theta_{\alpha,\gamma}\left({\bf u}^k\right)\}_{k\geq 1}$, we know that there exists a constant $C_0$ such that
\begin{equation}  
\begin{aligned}
&K \cdot \min_{1\leq k\leq K}\left\|\Delta_{\bf x}^k\right\|^2\leq \sum_{k=1}^{K}\left\|\Delta_{\bf x}^k\right\|^2\leq \frac{1}{\xi(\alpha,\gamma)}\left(\Theta_{\alpha,\gamma}\left({\bf u}^1\right)-\Theta_{\alpha,\gamma}\left({\bf u}^{K+1}\right) \right)\leq C_0.
\end{aligned}   
\end{equation}
This implies that $\min_{k\leq K}\|\Delta_{\bf x}^k\|=\mathcal{O}(\frac{1}{\sqrt{K}})$. Similarly, we can obtain  $\min_{k\leq K}\|\Delta_{\bf y}^k\|=\mathcal{O}(\frac{1}{\sqrt{K}})$ and $\min_{k\leq K}\|{\bf y}^k-{\bf z}^k\|=\mathcal{O}(\frac{1}{\sqrt{K}})$. This completes the proof.
\end{proof}

Note that in \cref{lemma_descent_add}, we show the lower boundedness of $\Theta_{\alpha,\gamma}$, as well as  $\mathcal{H}_\gamma$, for the generated sequences, relying on \cref{ass1}(iii) and \cref{ass2}. 
The lower boundedness plays a crucial role in establishing both the sublinear convergence rate and the convergence of the generated sequence.
Some similar results have also been discussed in \cite{liu2019envelope}, which demonstrates the consistency between the lower bound and the minimizer of $\mathcal{H}_\gamma$ and $F$.

In the following, we give the subsequential convergence result for \cref{alg3.1}. 
\begin{theorem}\label{theo1}
Suppose that \cref{ass1} and \cref{ass2} hold. Let  the sequence  $\{({\bf y}^k,{\bf z}^k,{\bf x}^k)\}_{k\geq 1}$ be generated by \cref{alg:2} which is assumed to be bounded, and the sequences $\{{\bf u}^k\},
~\{{\bf v}^k\}$ are defined in \cref{uv}. Then, 
\begin{itemize}
\item[(i)] any cluster point ${\bf u}^*:=({\bf y}^*,{\bf z}^*,{\bf x}^*,{\bf x}^*,{\bf x}^*)$  of the sequence $\left\{{\bf u}^k\right\}_{k\geq 1}$ is a critical point of the problem \cref{general}, i.e., it holds that $0\in \partial F({\bf y}^*)$.
\item[(ii)] The limit $\lim_{k\rightarrow \infty}  \Theta_{\alpha,\gamma} ({\bf u}^k)$ exists and for any cluster point ${\bf u}^*$ of
the sequence $\{{\bf u}^k\}_{k\geq 1}$, we have
\begin{equation}
\Theta^*:=\lim_{k\rightarrow \infty}  \Theta_{\alpha,\gamma} ({\bf u}^k)=\Theta_{\alpha,\gamma} ({\bf u}^*).
\end{equation}
\end{itemize}
\end{theorem}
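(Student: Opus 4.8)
The plan is to establish (ii) first, since its by-product---convergence of the $f_2$-values along a convergent subsequence---is exactly what is needed to close (i). For (ii), I would argue that $\{\Theta_{\alpha,\gamma}({\bf u}^k)\}_{k\ge1}$ converges: by \cref{lemma_descent} it is nonincreasing, and by the lower-bound estimate derived in the proof of \cref{lemma_descent_add} (together with \cref{ass1}(iii), the condition $0<\gamma<1/(L_{f_1}+L_h)$, and boundedness of the iterates) it is bounded below, so $\Theta^*:=\lim_k\Theta_{\alpha,\gamma}({\bf u}^k)$ exists. Now fix a cluster point ${\bf u}^*$ with ${\bf u}^{k_j}\to{\bf u}^*$. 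By \cref{lemma_descent_add}(i) we have $\Delta_{\bf x}^{k_j}\to0$, $\Delta_{\bf y}^{k_j}\to0$ and ${\bf y}^{k_j}-{\bf z}^{k_j}\to0$, so necessarily ${\bf u}^*=({\bf y}^*,{\bf z}^*,{\bf x}^*,{\bf x}^*,{\bf x}^*)$ with ${\bf y}^*={\bf z}^*$, and ${\bf w}^{k_j-1}={\bf x}^{k_j-1}+\alpha\Delta_{\bf x}^{k_j-1}\to{\bf x}^*$.

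Since all terms in $\Theta_{\alpha,\gamma}$ except $f_2({\bf z})$ depend continuously on $({\bf y},{\bf z},{\bf x},{\bf x}_1,{\bf x}_2)$ (recall $f_1$, $h$ and $\nabla h$ are continuous), it suffices to show $f_2({\bf z}^{k_j})\to f_2({\bf z}^*)$. Lower semicontinuity of $f_2$ gives $\liminf_j f_2({\bf z}^{k_j})\ge f_2({\bf z}^*)$. For the matching upper bound I would use the prox-minimality of the ${\bf z}$-update: writing $\zeta^{k}:=2{\bf y}^{k}-\gamma\nabla h({\bf y}^{k})-{\bf w}^{k-1}$ so that ${\bf z}^{k}={\rm Prox}_{\gamma f_2}(\zeta^{k})$, testing optimality of ${\bf z}^{k_j}$ against the competitor ${\bf z}^*$ yields $f_2({\bf z}^{k_j})\le f_2({\bf z}^*)+\tfrac1{2\gamma}\|{\bf z}^*-\zeta^{k_j}\|^2-\tfrac1{2\gamma}\|{\bf z}^{k_j}-\zeta^{k_j}\|^2$; since $\zeta^{k_j}\to\zeta^*:=2{\bf y}^*-\gamma\nabla h({\bf y}^*)-{\bf x}^*$ and ${\bf z}^{k_j}\to{\bf z}^*$, the two squared-norm terms cancel in the limit, so $\limsup_j f_2({\bf z}^{k_j})\le f_2({\bf z}^*)$. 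Hence $f_2({\bf z}^{k_j})\to f_2({\bf z}^*)$, so $\Theta_{\alpha,\gamma}({\bf u}^{k_j})\to\Theta_{\alpha,\gamma}({\bf u}^*)$; comparing with the already-established limit gives $\Theta^*=\Theta_{\alpha,\gamma}({\bf u}^*)$, proving (ii). As a free consequence, $F({\bf z}^{k_j})=f_1({\bf z}^{k_j})+f_2({\bf z}^{k_j})+h({\bf z}^{k_j})\to F({\bf z}^*)$.

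For (i), I would build an explicit subgradient of $F$ at ${\bf z}^{k+1}$ that vanishes. From \cref{optimality_z} there is $\xi^{k+1}\in\partial f_2({\bf z}^{k+1})$ with $\xi^{k+1}=-\tfrac1\gamma({\bf z}^{k+1}-2{\bf y}^{k+1}+{\bf w}^k)-\nabla h({\bf y}^{k+1})$; using $\partial F=\nabla f_1+\partial f_2+\nabla h$, the vector $d^{k+1}:=\nabla f_1({\bf z}^{k+1})+\xi^{k+1}+\nabla h({\bf z}^{k+1})$ lies in $\partial F({\bf z}^{k+1})$. Substituting $\tfrac1\gamma{\bf w}^k=\nabla f_1({\bf y}^{k+1})+\tfrac1\gamma{\bf y}^{k+1}$ from \cref{optimality_y}, one finds after cancellation
\[
d^{k+1}=\big(\nabla f_1({\bf z}^{k+1})-\nabla f_1({\bf y}^{k+1})\big)+\big(\nabla h({\bf z}^{k+1})-\nabla h({\bf y}^{k+1})\big)-\tfrac1\gamma\big({\bf z}^{k+1}-{\bf y}^{k+1}\big),
\]
so $\|d^{k+1}\|\le\big(L_{f_1}+L_h+\tfrac1\gamma\big)\|{\bf z}^{k+1}-{\bf y}^{k+1}\|\to0$ by \cref{lemma_descent_add}(i). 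Along the subsequence from part (ii): ${\bf z}^{k_j}\to{\bf z}^*={\bf y}^*$, $d^{k_j}\to0$ with $d^{k_j}\in\partial F({\bf z}^{k_j})$, and $F({\bf z}^{k_j})\to F({\bf z}^*)$; the closedness of the limiting subdifferential (the definition \cref{pf}) then yields $0\in\partial F({\bf z}^*)=\partial F({\bf y}^*)$.

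The main obstacle is the single point where $f_2$ is merely lower semicontinuous: both conclusions hinge on the strong convergence $f_2({\bf z}^{k_j})\to f_2({\bf z}^*)$, which is forced by the prox-minimality of the ${\bf z}$-step; without it one only obtains $\liminf$/$\limsup$ inequalities and cannot invoke the closedness of $\partial F$ in (i) nor identify $\Theta^*$ in (ii). Everything else is continuity of $f_1,h,\nabla h$, the algebraic identities in \cref{alg:2}, and the summability/limit estimates already proved in \cref{lemma_descent_add}. The only bookkeeping to watch is that ${\bf z}^{k_j}$ is produced at iteration $k_j-1$, so the relevant forcing term is ${\bf w}^{k_j-1}\to{\bf x}^*$, and that the extracted subsequences for ${\bf y}$ and ${\bf z}$ are mutually consistent, which is guaranteed by ${\bf y}^k-{\bf z}^k\to0$.
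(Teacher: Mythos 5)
Your proposal is correct and follows essentially the same route as the paper: energy monotonicity (\cref{lemma_descent}) plus the lower bound from \cref{lemma_descent_add} for existence of $\Theta^*$, prox-minimality of the ${\bf z}$-step combined with lower semicontinuity to force $f_2({\bf z}^{k_j})\to f_2({\bf z}^*)$, and the two optimality conditions together with ${\bf z}^{k}-{\bf y}^{k}\to 0$ and closedness of the limiting subdifferential to get $0\in\partial F({\bf y}^*)$. The only differences are presentational and mildly to your credit: you prove (ii) first so that the $f_2$-value convergence needed for the $F$-attentive closedness argument in (i) is available (the paper's proof of (i) uses this implicitly), and you write out the vanishing subgradient $d^{k+1}$ with an explicit bound $\bigl(L_{f_1}+L_h+\tfrac1\gamma\bigr)\|{\bf z}^{k+1}-{\bf y}^{k+1}\|$ rather than just summing \cref{optimality_y} and \cref{optimality_z} and passing to the limit.
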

\begin{proof}
We first prove (i). It follows from \cref{alg:2} and \cref{lemma_descent_add}(i) that 
$$\lim _{k \rightarrow \infty}\left\|{\bf z}^{k+1}-{\bf z}^k\right\|=0.$$
Let ${\bf u}^*$ be a cluster point of $\left\{{\bf u}^k\right\}_{k \geq 1}$, and assume that $\left\{{\bf u}^{k_j}\right\}$ is a convergent subsequence such that
$
\lim _{k \rightarrow \infty} {\bf u}^{k_j} ={\bf u}^* .
$
Then
\begin{equation}\label{limit}
  \lim _{j \rightarrow \infty} {\bf u}^{k_j} =\lim _{j \rightarrow \infty} {\bf u}^{k_j-1}={\bf u}^*.
\end{equation} 
Summing \cref{optimality_y} and \cref{optimality_z} and taking the limit along the convergent subsequence $\{{\bf u}^{k_j}\}$, and applying \cref{pf} and \cref{limit}, we have
$$
0 \in \nabla f_1\left({\bf y}^*\right)+\partial f_2\left({\bf y}^*\right)+\nabla h\left({\bf y}^*\right).
$$

Now we prove (ii).
Suppose that $\left\{{\bf u}^{k_j}\right\}$ is a subsequence which converges to ${\bf u}^*$ as $j \rightarrow \infty$.
 It follows from \cref{lemma_descent} and \cref{lemma_descent_add} that $\Theta_{\alpha,\gamma}$ is nonincreasing and bounded from below by \cref{ass2}. Therefore, 
$\Theta^*:=\lim_{k\rightarrow \infty}  \Theta_{\alpha,\gamma} ({\bf u}^k)$ exists. 
It follows from \cref{alg:2} that ${\bf z}^k$ is the minimizer of ${\bf z}$-subproblem, we have 
$$\begin{aligned}
&f_2({\bf z}^k)+\frac{1}{2 \gamma}\left\|{\bf z}^k-\left(2 {\bf y}^k-\gamma \nabla h({\bf y}^k)-{\bf x}^{k-1}\right)\right\|^2 \\
&\leq f_2({\bf z}^*)+\frac{1}{2 \gamma}\left\|{\bf z}^*-\left(2 {\bf y}^k-\gamma \nabla h({\bf y}^k)-{\bf x}^{k-1}\right)\right\|^2.   
\end{aligned}$$
Replacing $k$ by $k_j$ in the above inequality and taking the limit on both sides, it follows from \cref{limit}  yields
$
\lim _{j \rightarrow \infty} f_2 ({\bf z}^{k_j} ) \leq f_2\left({\bf z}^*\right).
$
On the other hand, since $f_2$ is proper and closed, we have $\lim\inf _{j \rightarrow \infty} f_2\left({\bf z}^{k_j}\right) \geq$ $f_2\left({\bf z}^*\right)$. Hence
$$
\lim _{j \rightarrow \infty} f_2({\bf z}^{k_j})=f_2({\bf z}^*).
$$
This together with the properties of $f_1$ and $g$ in \cref{ass1} and \cref{limit}, { and the boundedness of the sequence $\{{\bf u}^k\}$}, we claim that
$$\Theta^*:=\lim_{k\rightarrow \infty}  \Theta_{\alpha,\gamma} ({\bf u}^k)=\Theta_{\alpha,\gamma} ({\bf u}^*).$$
This completes the proof.
\end{proof}

\begin{remark}
Note that the boundedness of the sequence $\{{\bf x}^k\}$ is a standard assumption for the nonconvex optimization algorithms. It is documented in \cite[Remark 3.3]{attouch2010proximal} that the boundedness assumption on the sequence $\{{\bf x}^k\}$ automatically holds when the corresponding lower level set
$\{{\bf x}~|~F({\bf x}) \leq F_0\}$ is compact for some $F_0 \in\mathbb{R}$.
\end{remark}

We present an inequality characterizing the upper bound of the subdifferential of $\Theta_{\alpha,\gamma}$, which plays a key role in further convergence analysis.
\begin{lemma}\label{lemma4}
 Suppose that \cref{ass1} and \cref{ass2} hold. Let $h$ be a twice continuously differentiable
function with a bounded Hessian, i.e., there exists a constant $M > 0$ such that $\| \nabla^2 h({\bf y})\| \leq  M, \forall ~ {\bf y}\in\mathbb{R}^n$.
Let  $\{({\bf y}^k,{\bf z}^k,{\bf x}^k)\}_{k\geq 1}$ be the sequence generated by \cref{alg:2}, and $\{{\bf u}^k\},
~\{{\bf v}^k\}$ are defined in \cref{uv}.  Then,  for any $k \geq  1$,
 there exists a constant $b>0$ such that 
 \begin{equation}\label{lemma3_eq1}
 {\rm dist}\left(0,\partial \Theta_{\alpha,\gamma}({\bf u}^{k+1})\right)\leq b\left(\|\Delta_{\bf x}^{k+1}\|+\|\Delta_{\bf x}^k\|\right).
 \end{equation}
\end{lemma}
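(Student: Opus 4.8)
The plan is to exhibit (rather than just assert the existence of) a single element of $\partial\Theta_{\alpha,\gamma}({\bf u}^{k+1})$, constructed from the first-order optimality conditions \cref{optimality_y} and \cref{optimality_z}, and then to bound its Euclidean norm block by block. Since $h$ is assumed twice continuously differentiable, the map $({\bf y},{\bf z},{\bf x})\mapsto\mathcal{H}_\gamma({\bf y},{\bf z},{\bf x})-f_2({\bf z})$ defined through \cref{merit1} is continuously differentiable, and so is $\Theta_{\alpha,\gamma}-f_2$ through \cref{merit2}. By the calculus rule $\partial(g+f)=\nabla g+\partial f$ for $g\in C^1$ recalled after \cref{def2.1}, the limiting subdifferential $\partial\Theta_{\alpha,\gamma}$ decouples into its five partial blocks (in ${\bf y}$, ${\bf z}$, ${\bf x}$, ${\bf x}_1$, ${\bf x}_2$), with only the ${\bf z}$-block carrying the set $\partial f_2$. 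I would write these five partials out explicitly; the only one requiring care is the ${\bf y}$-block, where differentiating $\frac{1}{2\gamma}\|{\bf y}-{\bf x}-\gamma\nabla h({\bf y})\|^2$ and $-\frac{1}{2\gamma}\|{\bf z}-{\bf x}-\gamma\nabla h({\bf y})\|^2$ introduces, via the chain rule, the operators $\frac{1}{\gamma}(I-\gamma\nabla^2h({\bf y}))$ and $\nabla^2h({\bf y})$. This is precisely where the bounded-Hessian hypothesis $\|\nabla^2h\|\le M$ will be used.

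Next I would evaluate at ${\bf u}^{k+1}=({\bf y}^{k+1},{\bf z}^{k+1},{\bf x}^{k+1},{\bf x}^k,{\bf x}^{k-1})$, selecting in the ${\bf z}$-block the subgradient ${\bf s}^{k+1}\in\partial f_2({\bf z}^{k+1})$ furnished by \cref{optimality_z}. Writing ${\bf a}^{k+1}:={\bf y}^{k+1}-{\bf x}^{k+1}-\gamma\nabla h({\bf y}^{k+1})$ and ${\bf b}^{k+1}:={\bf z}^{k+1}-{\bf x}^{k+1}-\gamma\nabla h({\bf y}^{k+1})$, the last line of \cref{alg:2} gives ${\bf x}^{k+1}={\bf w}^k+{\bf z}^{k+1}-{\bf y}^{k+1}$, and combined with \cref{optimality_y} and \cref{optimality_z} this yields the key identities ${\bf a}^{k+1}=\gamma{\bf s}^{k+1}$, ${\bf b}^{k+1}=-\gamma\big(\nabla f_1({\bf y}^{k+1})+\nabla h({\bf y}^{k+1})\big)$, and ${\bf a}^{k+1}-{\bf b}^{k+1}={\bf y}^{k+1}-{\bf z}^{k+1}$. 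Substituting these, the ${\bf z}$- and ${\bf x}$-blocks collapse to $\pm\frac{1}{\gamma}({\bf y}^{k+1}-{\bf z}^{k+1})$; the ${\bf y}$-block, after the cancellations produced by ${\bf b}^{k+1}=-\gamma\nabla(f_1+h)({\bf y}^{k+1})$, collapses to $\big(\frac{1}{\gamma}I-\nabla^2h({\bf y}^{k+1})\big)({\bf y}^{k+1}-{\bf z}^{k+1})$; and the ${\bf x}_1$- and ${\bf x}_2$-blocks are exactly $\pm\frac{\alpha^2}{\gamma}\Delta_{\bf x}^k$.

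Finally I would invoke the relation ${\bf y}^{k+1}-{\bf z}^{k+1}={\bf w}^k-{\bf x}^{k+1}=\alpha\Delta_{\bf x}^k-\Delta_{\bf x}^{k+1}$ from \cref{alg:2}, so that $\|{\bf y}^{k+1}-{\bf z}^{k+1}\|\le\|\Delta_{\bf x}^{k+1}\|+\alpha\|\Delta_{\bf x}^k\|$, and combine this with $\|\nabla^2h({\bf y}^{k+1})\|\le M$ and with $\alpha<1$ (which holds since \cref{ass2} forces $\alpha<\Lambda(\gamma)<\frac{1}{1+\gamma L_h}\le1$) to bound the norm of each of the five blocks by a fixed multiple of $\|\Delta_{\bf x}^{k+1}\|+\|\Delta_{\bf x}^k\|$. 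Taking the norm of the stacked five-block vector then gives \cref{lemma3_eq1} with an explicit constant $b$ depending only on $\gamma$, $M$, and $\alpha$. The \emph{main obstacle} is the ${\bf y}$-block: getting the two Hessian terms from the chain rule correct and then recognizing the cancellation that leaves a single operator applied to ${\bf y}^{k+1}-{\bf z}^{k+1}$; everything else is routine bookkeeping with the identities above.
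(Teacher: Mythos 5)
Your proposal is correct and follows essentially the same route as the paper: exhibit the explicit subgradient element of $\Theta_{\alpha,\gamma}$ at ${\bf u}^{k+1}$ obtained from \cref{optimality_y}, \cref{optimality_z} and the update ${\bf x}^{k+1}={\bf w}^k+{\bf z}^{k+1}-{\bf y}^{k+1}$, and bound each of the five blocks by a multiple of $\|\Delta_{\bf x}^{k+1}\|+\|\Delta_{\bf x}^k\|$ using $\|\nabla^2 h\|\le M$ and $\|{\bf y}^{k+1}-{\bf z}^{k+1}\|\le\|\Delta_{\bf x}^{k+1}\|+\alpha\|\Delta_{\bf x}^k\|$. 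The only difference is cosmetic bookkeeping (you differentiate the second, equivalent form of $\mathcal{H}_\gamma$ and organize the cancellations through the identities for ${\bf a}^{k+1}$, ${\bf b}^{k+1}$, whereas the paper works from the first form), and your resulting block expressions coincide with the paper's.
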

\begin{proof}
 Firstly, from the definition of $\Theta_{\alpha,\gamma}$ in \cref{merit2}, we have
 \begin{equation}\label{lemma3_eq2}
\begin{aligned}
 \nabla_{\bf y} \Theta_{\alpha,\gamma}({\bf u}^{k+1}) 
& =\nabla f_1({\bf y}^{k+1})+\frac{1}{\gamma}\left({\bf y}^{k+1}-{\bf x}^{k+1}\right)+\nabla^2 h ({\bf y}^{k+1} )^\top\left({\bf z}^{k+1}-{\bf y}^{k+1}\right) \\
& =\frac{1}{\gamma}\left({\bf x}^k-{\bf x}^{k+1}\right)+\frac{\alpha}{\gamma}\left({\bf x}^k-{\bf x}^{k-1}\right)+\nabla^2 h ({\bf y}^{k+1})^\top\left({\bf z}^{k+1}-{\bf y}^{k+1}\right),
\end{aligned}
\end{equation}
where the last equality follows from \cref{optimality_y}.
Secondly, we compute the subgradient of $\Theta_{\alpha,\gamma}$ with respect to $\bf z$ as follows: 
 \begin{equation}\label{lemma3_eq3}
\begin{aligned}
& \partial_{\bf z} \Theta_{\alpha,\gamma}({\bf u}^{k+1}) \\
& =\partial f_2({\bf z}^{k+1})+\frac{1}{\gamma}\left({\bf z}^{k+1}-2 {\bf y}^{k+1}+\gamma \nabla h({\bf y}^{k+1})+{\bf x}^{k+1}\right)-\frac{2}{\gamma}\left({\bf z}^{k+1}-{\bf y}^{k+1}\right) \\
& \ni-\frac{1}{\gamma}\left({\bf x}^{k+1}-{\bf x}^k\right)-\frac{\alpha}{\gamma}\left({\bf x}^k-{\bf x}^{k-1}\right),
\end{aligned}
\end{equation}
where the inclusion follows from \cref{alg:2} and \cref{optimality_z}.
Thirdly, from the definition of $\Theta_{\alpha,\gamma}$ in \cref{merit2}, it is easy to obtain
 \begin{equation}\label{lemma3_eq4}
\begin{aligned}
\nabla_{\bf x} \Theta_{\alpha,\gamma}({\bf u}^{k+1}) & =\frac{1}{\gamma}\left({\bf z}^{k+1}-{\bf y}^{k+1}\right) =\frac{1}{\gamma}\left({\bf x}^{k+1}-{\bf x}^k\right)+\frac{\alpha}{\gamma}\left({\bf x}^k-{\bf x}^{k-1}\right),
\end{aligned}
\end{equation}
where the last equality follows from \cref{alg:2}. 
Finally, it follows from \cref{merit2} that 
 \begin{equation}\label{lemma3_eq5}
 \nabla_{{\bf x}_1} \Theta_{\alpha,\gamma}({\bf u}^{k+1})=\frac{\alpha^2}{\gamma}\left( {\bf x}^k-{\bf x}^{k-1}\right)~~ {\rm and}~~ 
\nabla_{{\bf x}_2} \Theta_{\alpha,\gamma}({\bf u}^{k+1})=\frac{\alpha^2}{\gamma}\left( {\bf x}^{k-1}-{\bf x}^k\right).
\end{equation}
Besides, by the boundedness of $\|\nabla^2 h(\cdot)\|$, we get
 \begin{equation}\label{lemma3_eq6}
\begin{aligned}
& \left\|\nabla_{\bf y} \Theta_{\alpha,\gamma}({\bf u}^{k+1})\right\| \\
& \leq \frac{1}{\gamma}\left\|{\bf x}^k-{\bf x}^{k+1}\right\|+\frac{\alpha}{\gamma}\left\|{\bf x}^k-{\bf x}^{k-1}\right\|+M\left\|{\bf z}^{k+1}-{\bf y}^{k+1}\right\| \\
& \leq\left(\frac{1}{\gamma}+M\right)\left\|{\bf x}^k-{\bf x}^{k+1}\right\| +\left(\frac{\alpha}{\gamma}+\alpha M\right)\left\|{\bf x}^k-{\bf x}^{k-1}\right\|,
\end{aligned}
\end{equation}
where the last inequality follows from the first and last relations in \cref{alg:2}. Combining \cref{lemma3_eq2}, \cref{lemma3_eq3}, \cref{lemma3_eq4}, \cref{lemma3_eq5}, and \cref{lemma3_eq6}, we can obtain the conclusion \cref{lemma3_eq1} immediately. This completes the proof.
\end{proof}

Now we establish the global convergence for \cref{alg3.1} based on the uniformized KL property. { We will show that the sequence $\left\{{\bf u}^k\right\}_{k \geq 1}$ has finite length and thus is convergent. Especially, the sequence $\left\{{\bf y}^k\right\}_{k \geq 1}$ converges to a stationary point in ${\rm crit}F$. }
\begin{theorem}\label{theo2}
 Suppose that \cref{ass1} and \cref{ass2} hold. Let $h$ be a twice continuously differentiable
function with a bounded Hessian, i.e., there exists a constant $M > 0$ such that $\| \nabla^2 h({\bf y})\| \leq  M, \forall ~ {\bf y}\in\mathbb{R}^n$.
Let  $\{({\bf y}^k,{\bf z}^k,{\bf x}^k)\}_{k\geq 1}$ be the sequence generated by \cref{alg:2} which is assumed to be bounded, and $\{{\bf u}^k\},
~\{{\bf v}^k\}$ are defined in \cref{uv}. If $F$ in \cref{general} is  a KL function, then
the sequence $\left\{{\bf u}^k\right\}_{k \geq 1}$ has finite length, that is,  
$$
 \sum_{k = 1}^\infty\left\|{\bf y}^{k+1}-{\bf y}^k\right\|<+\infty, \quad \sum_{k = 1}^\infty\left\|{\bf z}^{k+1}-{\bf z}^k\right\|<+\infty,\quad \sum_{k = 1}^\infty\left\|{\bf x}^{k+1}-{\bf x}^k\right\|<+\infty .
$$
Hence, the whole sequence $\{{\bf u}^k\}_{k\geq 1} $ is convergent.
\end{theorem}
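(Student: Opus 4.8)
The plan is to run the standard Kurdyka--{\L}ojasiewicz descent argument, built on the three facts already established: the sufficient-decrease estimate \cref{descent} from \cref{lemma_descent}, the relative-error bound \cref{lemma3_eq1} from \cref{lemma4}, and the subsequential convergence together with the fact that $\Theta_{\alpha,\gamma}$ is constant on the cluster set, from \cref{theo1}. First I would dispose of the degenerate case in which $\Theta_{\alpha,\gamma}({\bf u}^{k_0})=\Theta^*$ for some $k_0$: since $\{\Theta_{\alpha,\gamma}({\bf u}^k)\}_{k\ge1}$ is nonincreasing with limit $\Theta^*$ by \cref{theo1}(ii), it is then eventually constant, and \cref{descent} forces $\Delta_{\bf x}^k=0$ and $\Delta_{\bf y}^{k+1}=0$ for all $k\ge k_0$, so the iterates are eventually stationary and the finite-length claim is trivial.

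So assume $\Theta_{\alpha,\gamma}({\bf u}^k)>\Theta^*$ for every $k$, and let $\Omega$ be the set of cluster points of $\{{\bf u}^k\}_{k\ge1}$. By \cref{lemma_descent_add}(i) consecutive iterates draw together, so $\Omega$ is nonempty, compact and connected, and by \cref{theo1}(ii) the function $\Theta_{\alpha,\gamma}$ equals the constant $\Theta^*$ on $\Omega$. Because $F$ is a KL function, so is the merit function $\Theta_{\alpha,\gamma}$ in \cref{merit2} (it differs from $F$ lifted to the product space only by the continuously differentiable terms involving $h$ and the quadratics, which does not destroy the KL property in the usual semialgebraic/subanalytic setting). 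Hence \cref{Lem2.1} applies and provides $\varsigma>0$, $\eta>0$ and $\varphi\in\Phi_\eta$ such that, for all $k$ large enough that ${\rm dist}({\bf u}^k,\Omega)<\varsigma$ and $\Theta^*<\Theta_{\alpha,\gamma}({\bf u}^k)<\Theta^*+\eta$, one has $\varphi'\big(\Theta_{\alpha,\gamma}({\bf u}^k)-\Theta^*\big)\,{\rm dist}\big(0,\partial\Theta_{\alpha,\gamma}({\bf u}^k)\big)\ge1$.

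For such $k$ put $p_k:=\varphi\big(\Theta_{\alpha,\gamma}({\bf u}^k)-\Theta^*\big)\ge0$, a nonincreasing sequence. Concavity of $\varphi$ gives $p_k-p_{k+1}\ge\varphi'\big(\Theta_{\alpha,\gamma}({\bf u}^k)-\Theta^*\big)\big(\Theta_{\alpha,\gamma}({\bf u}^k)-\Theta_{\alpha,\gamma}({\bf u}^{k+1})\big)$, and combining this with the KL inequality above, the relative-error bound \cref{lemma3_eq1} with its index shifted by one, and the decrease \cref{descent}, I would obtain
\[
p_k-p_{k+1}\ \ge\ \frac{c\big(\|\Delta_{\bf y}^{k+1}\|^2+\|\Delta_{\bf x}^k\|^2\big)}{b\big(\|\Delta_{\bf x}^k\|+\|\Delta_{\bf x}^{k-1}\|\big)},\qquad c:=\min\Big\{\big(\Lambda(\gamma)-\tau\big)\big(\tfrac1\gamma+\tfrac{L_h}{2}\big),\ \xi(\alpha,\gamma)\Big\}>0.
\]
Setting $s_k:=\|\Delta_{\bf y}^{k+1}\|+\|\Delta_{\bf x}^k\|$ and using $\|\Delta_{\bf y}^{k+1}\|^2+\|\Delta_{\bf x}^k\|^2\ge\tfrac12 s_k^2$ together with $\|\Delta_{\bf x}^k\|+\|\Delta_{\bf x}^{k-1}\|\le s_k+s_{k-1}$, this becomes $s_k^2\le\tfrac{2b}{c}(p_k-p_{k+1})(s_k+s_{k-1})$, and a weighted arithmetic--geometric mean inequality then yields the linear recursion $s_k\le\tfrac13 s_{k-1}+\tfrac{8b}{3c}(p_k-p_{k+1})$.

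Since $\{p_k\}$ is nonnegative and nonincreasing, $\sum_k(p_k-p_{k+1})<\infty$, so \cref{bot} (with $a=\tfrac13$, $b=0$, and this summable forcing term) gives $\sum_k s_k<\infty$, that is, $\sum_k\|\Delta_{\bf x}^k\|<\infty$ and $\sum_k\|\Delta_{\bf y}^k\|<\infty$. Using the identity ${\bf y}^k-{\bf z}^k={\bf x}^{k-1}-{\bf x}^k+\alpha({\bf x}^{k-1}-{\bf x}^{k-2})$ from \cref{alg:2} to express ${\bf z}^{k+1}-{\bf z}^k$ in terms of $\Delta_{\bf y}^{k+1},\Delta_{\bf x}^{k+1},\Delta_{\bf x}^k,\Delta_{\bf x}^{k-1}$, one also gets $\sum_k\|{\bf z}^{k+1}-{\bf z}^k\|<\infty$; hence all three series converge, $\{{\bf u}^k\}$ is a Cauchy sequence, and its limit is a critical point of $F$ by \cref{theo1}(i). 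The step I expect to be the main obstacle is the index bookkeeping in the recursion: the relative-error bound at ${\bf u}^k$ naturally carries $\|\Delta_{\bf x}^{k-1}\|$ while the decrease at step $k$ carries $\|\Delta_{\bf x}^k\|$ and $\|\Delta_{\bf y}^{k+1}\|$, so one must package these offset quantities into a single scalar recursion whose homogeneous coefficient is strictly below $1$ and whose inhomogeneous part telescopes, so that \cref{bot} applies cleanly; a lesser subtlety is confirming that $\Theta_{\alpha,\gamma}$, and not merely $F$, inherits the KL property.
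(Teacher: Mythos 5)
Your proposal is correct and follows essentially the same route as the paper's proof: dispose of the degenerate case, apply the uniformized KL property (\cref{Lem2.1}) to $\Theta_{\alpha,\gamma}$ on the cluster set, combine concavity of $\varphi$ with the descent estimate \cref{descent} and the subgradient bound \cref{lemma3_eq1}, use an AM--GM step to obtain a linear recursion, and invoke \cref{bot} to get finite length and hence convergence. The only difference is cosmetic—you bundle $\|\Delta_{\bf y}^{k+1}\|+\|\Delta_{\bf x}^k\|$ into a single scalar recursion with coefficient $\tfrac13$, while the paper keeps the two inequalities \cref{the_aa} and \cref{the11} with coefficients $\tfrac14$—and the KL transfer from $F$ to $\Theta_{\alpha,\gamma}$ that you flag is handled just as implicitly in the paper.
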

\begin{proof}
   We use $\theta({\bf u}^\infty)$ to denote the cluster point set of the sequence $\{{\bf u}^k\}$. Since $\{{\bf u}^k\}$ is bounded, $\theta({\bf u}^\infty)$ is a nonempty compact set, and it holds that
$$\lim_{k \rightarrow \infty} {\rm dist}\left({\bf u}^k,\theta({\bf u}^\infty)\right) = 0.$$
From \cref{lemma_descent_add}(i), \cref{theo1}(i) and \cref{alg:2}, we know that $\theta({\bf u}^\infty)\subseteq{\rm crit} F\times {\rm crit} F\times {\rm crit} F\times {\rm crit} F\times {\rm crit} F$.
Hence, for any ${\bf u}^*:=({\bf y}^*,{\bf z}^*,{\bf x}^*,{\bf x}^*,{\bf x}^*)\in \theta({\bf u}^\infty)$, there exists a subsequence $\{{\bf u}^{k_i}\}$ of $\{{\bf u}^k\}$
converging to ${\bf u}^*$. 

It follows from \cref{theo1}(ii) that $\lim_{k\rightarrow \infty}  \Theta_{\alpha,\gamma} ({\bf u}^k)=\Theta_{\alpha,\gamma} ({\bf u}^*)$.
If there exists an integer $\bar k$ such that $\Theta_{\alpha,\gamma} ({\bf u}^k)=\Theta_{\alpha,\gamma} ({\bf u}^*)$, then from \cref{lemma_descent}, we have
$$\begin{aligned}
&\left(\Lambda(\gamma)-\tau\right)\left(\frac{1}{\gamma}+\frac{L_h}{2}\right)\left\|\Delta_{\bf y}^{k+1}\right\|^2 +\xi(\alpha,\gamma)\left\|\Delta_{\bf x}^k\right\|^2\\
&\leq \Theta_{\alpha,\gamma} ({\bf u}^k)-\Theta_{\alpha,\gamma} ({\bf u}^{k+1})\\
&\leq \Theta_{\alpha,\gamma} ({\bf u}^{\bar k})-\Theta_{\alpha,\gamma} ({\bf u}^*)\\
&=0 \quad \forall ~ k>\bar k. 
\end{aligned}
$$
Thus, we have ${\bf y}^{k+1}={\bf y}^k$ and ${\bf x}^{k+1}={\bf x}^k$ for any $k>\bar k$. Together with \cref{alg:2}, we also have ${\bf z}^{k+1}={\bf z}^k$, and thus the assertion $\sum_{k = 1}^\infty\left\|{\bf x}^{k+1}-{\bf x}^k\right\|<+\infty,~ \sum_{k = 1}^\infty\left\|{\bf y}^{k+1}-{\bf y}^k\right\|<+\infty,$ and $\sum_{k = 1}^\infty\left\|{\bf z}^{k+1}-{\bf z}^k\right\|<+\infty$ hold trivially.
Otherwise, since $\Theta_{\alpha,\gamma} ({\bf u}^k)$ is nonincreasing from \cref{lemma_descent}, we have $\Theta_{\alpha,\gamma} ({\bf u}^k)>\Theta_{\alpha,\gamma} ({\bf u}^*)$ for all $k$. Again from $\lim_{k\rightarrow \infty}  \Theta_{\alpha,\gamma} ({\bf u}^k)=\Theta_{\alpha,\gamma} ({\bf u}^*)$, we know that for any $\eta>0$, there exists a nonnegative integer $k_0$ such that $\Theta_{\alpha,\gamma} ({\bf u}^k)<\Theta_{\alpha,\gamma}({\bf u}^*)+\eta$ for any $k>k_0$. In addition, for any $\varsigma>0$ there exists a positive integer $k_1$ such that ${\rm dist}\left({\bf u}^{k},\theta({\bf u}^\infty)\right)<\varsigma$ for all $k>k_1$. Consequently, for any $\eta,~\varsigma>0$, when $k>k_2:=\max\{k_0,k_1\}$, we have
$${\rm dist}\left({\bf u}^k,\theta({\bf u}^\infty)\right)<\varsigma \qquad \hbox{and} \qquad \Theta_{\alpha,\gamma} ({\bf u}^k)<\Theta_{\alpha,\gamma}({\bf u}^*)+\eta.$$
Since $\theta({\bf u}^\infty)$ is a nonempty and compact set, and $\Theta_{\alpha,\gamma}$ is a constant on $\theta({\bf u}^\infty)$, we can apply \cref{Lem2.1} with $\Omega:=\theta({\bf u}^\infty)$.  Therefore, for any $k>k_2$, we have
\begin{equation}\label{var}
\varphi'(\Theta_{\alpha,\gamma}({\bf u}^k)-\Theta_{\alpha,\gamma}({\bf u}^*)){\rm dist}(0,\partial \Theta_{\alpha,\gamma}({\bf u}^k))\geq 1.
\end{equation}
From the concavity of $\varphi$, we have
\begin{eqnarray*}
&&\varphi (\Theta_{\alpha,\gamma}({\bf u}^k)-\Theta_{\alpha,\gamma}({\bf u}^*))-\varphi (\Theta_{\alpha,\gamma}({\bf u}^{k+1})-
\Theta_{\alpha,\gamma}({\bf u}^*))\\&&\quad\geq\varphi'(\Theta_{\alpha,\gamma}({\bf u}^k)-\Theta_{\alpha,\gamma}({\bf u}^*))( \Theta_{\alpha,\gamma}({\bf u}^k)-\Theta_{\alpha,\gamma}({\bf u}^{k+1})).
\end{eqnarray*}
Then, associated with ${\rm dist}\left(0,\partial \Theta_{\alpha,\gamma}({\bf u}^k)\right)\leq b(\|{\bf x}^k-{\bf x}^{k-1}\|+\|{\bf x}^{k-1}-{\bf x}^{k-2}\|)$ in \cref{lemma4}, \cref{var}, and $\varphi'(\Theta_{\alpha,\gamma}({\bf u}^k)-\Theta_{\alpha,\gamma}({\bf u}^*))>0$, we get
\begin{equation*}
\begin{split}
\Theta_{\alpha,\gamma}({\bf u}^k)-\Theta_{\alpha,\gamma}({\bf u}^{k+1})&\leq\frac{\varphi (\Theta_{\alpha,\gamma}({\bf u}^k)-\Theta_{\alpha,\gamma}({\bf u}^*))-\varphi (\Theta_{\alpha,\gamma}({\bf u}^{k+1})-
\Theta_{\alpha,\gamma}({\bf u}^*))}{\varphi'(\Theta_{\alpha,\gamma}({\bf u}^k)-\Theta_{\alpha,\gamma}({\bf u}^*))}\\
&\leq b(\|{\bf x}^k-{\bf x}^{k-1}\|+\|{\bf x}^{k-1}-{\bf x}^{k-2}\|)\\
&\quad\times[\varphi (\Theta_{\alpha,\gamma}({\bf u}^k)-\Theta_{\alpha,\gamma}({\bf u}^*))-\varphi (\Theta_{\alpha,\gamma}({\bf u}^{k+1})-\Theta_{\alpha,\gamma}({\bf u}^*))].
\end{split}
\end{equation*}
For convenience, for all $p,q\in\mathbb{ N}$, we define
$$\zeta_{p,q}:=\varphi\big(\Theta_{\alpha,\gamma}({\bf u}^p)-\Theta_{\alpha,\gamma}({\bf u}^*)\big)-\varphi\big(\Theta_{\alpha,\gamma}({\bf u}^q)-\Theta_{\alpha,\gamma}({\bf u}^*)\big).$$
Combining \cref{descent} and the above relation, it yields that for any $k>k_2$,
$$\begin{aligned} 
&\left(\Lambda(\gamma)-\tau\right)\left(\frac{1}{\gamma}+\frac{L_h}{2}\right)\left\|\Delta_{\bf y}^{k+1}\right\|^2 +\xi(\alpha,\gamma)\left\|\Delta_{\bf x}^k\right\|^2\\
&\leq \Theta_{\alpha,\gamma}({\bf u}^k)- \Theta_{\alpha,\gamma}({\bf u}^{k+1}) \\
&\leq b\left(\|\Delta_{\bf x}^k\|+\|\Delta_{\bf x}^{k-1}\|\right)\zeta_{k,k+1}.
\end{aligned}$$
This implies that
$$ \|\Delta_{\bf y}^{k+1}\| \leq \sqrt{\frac{1}{2}(\|\Delta_{\bf x}^k\|+\|\Delta_{\bf x}^{k-1}\|)}\sqrt{\frac{2b}{\rho_1}\zeta_{k,k+1}},$$
and 
$$ \|\Delta_{\bf x}^{k}\| \leq \sqrt{\frac{1}{2}(\|\Delta_{\bf x}^k\|+\|\Delta_{\bf x}^{k-1}\|)}\sqrt{\frac{2b}{\rho_2}\zeta_{k,k+1}},$$
where $\rho_1:=\left(\Lambda(\gamma)-\tau\right)\left(\frac{1}{\gamma}+\frac{L_h}{2}\right)$ and $\rho_2:= \xi(\alpha,\gamma)$.
Further, using the fact that $\sqrt{\mu_1\mu_2}\leq \mu_1/2 + \mu_2/2$ with $\mu_1=(\|\Delta_{\bf x}^k\|+\|\Delta_{\bf x}^{k-1}\|)/2$
and $\mu_2= 2b \zeta_{k,k+1}/\rho_1$ or $\mu_2= 2b \zeta_{k,k+1}/\rho_2$, we get
\begin{equation}\label{the_aa}
 \|\Delta_{\bf y}^{k+1}\|\leq\frac{1}{4}\left(\|\Delta_{\bf x}^k\|+\|\Delta_{\bf x}^{k-1}\|\right)+\frac{b}{\rho_1}\zeta_{k,k+1},
\end{equation}
and 
\begin{equation}\label{the11}
 \|\Delta_{\bf x}^{k}\|\leq\frac{1}{4}\left(\|\Delta_{\bf x}^k\|+\|\Delta_{\bf x}^{k-1}\|\right)+\frac{b}{\rho_2}\zeta_{k,k+1}.
\end{equation}
 Then, it follows from \cref{bot} and \cref{the11} that
$\sum_{k=1}^{\infty}\|\Delta_{\bf x}^{k+1}\| <+\infty,$ and we further have $\sum_{k=1}^{\infty}\|\Delta_{\bf y}^{k+1}\| <+\infty$ due to  \cref{the_aa}. Again from \cref{alg:2}, we know that $\sum_{k=1}^{\infty}\|\Delta_{\bf z}^{k+1}\| <+\infty$. Thus, $\{{\bf u}^k\}_{k\geq 1}$ is a Cauchy sequence and hence it is convergent. Applying \cref{theo1}(i), there exists a ${\bf y}^*\in {\rm crit}F$ such that $\lim_{k\rightarrow\infty}{\bf y}^k={\bf y}^*$. This completes the proof.
\end{proof}

\begin{remark}
KL functions exhibit remarkable versatility and are extensively applied in various domains, including semi-algebraic analysis, subanalytic analysis, and log-exp functions.
Concrete examples of KL functions can be found in \cite{attouch2010proximal, attouch2013convergence, bolte2014alternating}. These examples encompass many common instances such as $\ell_p$-norm (where $p\geq 0$), indicator functions of semi-algebraic sets, and a majority of convex functions.
\end{remark}

\section{Extrapolated PnP-DYS methods} \label{sec4}
In this section, we focus on the development of a  class of Plug-and-Play Davis-Yin splitting (PnP-DYS) algorithms with convergence guarantee. 
The PnP approach is a versatile methodology primarily utilized for addressing inverse problems involving large-scale measurements through the integration of statistical priors defined as denoisers. This approach draws inspiration from well-established proximal algorithms commonly employed in nonsmooth composite optimization, such as FBS, DRS, and ADMM.
The rise in the popularity of deep learning has resulted in the widespread adoption of PnP for effectively utilizing learned priors defined through pre-trained deep neural networks. This adoption has propelled PnP to achieve state-of-the-art performance across a range of applications.
For instance, by replacing the proximal operator of $f_2$ with a learned denoiser $\mathcal{D}_\sigma$ in \cref{DYS}, we can obtain a PnP-DYS method as follows:
\begin{equation*}
\left\{\begin{aligned}
 {\bf y}^{k+1} &= {\rm Prox}_{\gamma f_1}\left({\bf x}^k\right), \\ 
 {\bf z}^{k+1} &= \mathcal{D}_\sigma\left(2 {\bf y}^{k+1}-\gamma \nabla h({\bf y}^{k+1})-{\bf x}^k\right), \\
 {\bf x}^{k+1} &= {\bf x}^k+\left({\bf z}^{k+1}-{\bf y}^{k+1}\right).
\end{aligned}\right.     
 \end{equation*}
 
To guarantee the theoretical convergence, we  consider the Gradient Step (GS) Denoiser developed in \cite{cohen2021has,hurault2022gradient} as follows:
\begin{equation}\label{denoiser}
\mathcal{D}_\sigma= I-\nabla g_\sigma,
\end{equation}
which is obtained from a scalar function:
$$
g_\sigma=\frac{1}{2}\left\|{\bf x}-N_\sigma({\bf x})\right\|^2,
$$
where the mapping $N_\sigma({\bf x})$ is realized as a differentiable neural network, enabling the explicit computation of $g_\sigma$ and ensuring that $g_\sigma$ has a Lipschitz gradient with a constant $L$ ($L<1$). Originally, the denoiser $\mathcal{D}_\sigma$ in \cref{denoiser} is trained to denoise images degraded with Gaussian noise of level $\sigma$. In \cite{hurault2022gradient}, it is shown that, although constrained to be an exact conservative field, it can realize state-of-the-art denoising.
Remarkably, the denoiser $\mathcal{D}_\sigma$ in \cref{denoiser} takes the form of a proximal mapping of a weakly convex function, as stated in the next proposition.

\begin{proposition}\label{pro3.1} \cite[Propostion 3.1]{hurault2022proximal} $\mathcal{D}_\sigma({\bf x})=\operatorname{prox}_{\phi_\sigma}({\bf x})$, where $\phi_\sigma$ is defined by
\begin{equation}\label{phi_sigma}
\phi_\sigma({\bf x})=g_\sigma\left(\mathcal{D}_\sigma^{-1}({\bf x})\right)-\frac{1}{2}\left\|\mathcal{D}_\sigma^{-1}({\bf x})-{\bf x}\right\|^2
\end{equation}
if ${\bf x} \in \operatorname{Im}\left(\mathcal{D}_\sigma\right)$, and $\phi_\sigma({\bf x})=+\infty$ otherwise. Moreover, $\phi_\sigma$ is $\frac{L}{L+1}$-weakly convex and $\nabla \phi_\sigma \text { is } \frac{L}{1-L} \text {-Lipschitz on} \operatorname{Im}\left(\mathcal{D}_\sigma\right)$, { and  $\phi_\sigma({\bf x})\geq g_{\sigma}({\bf x})$, $\forall  {\bf x}\in\mathbb{R}^n$}.
\end{proposition}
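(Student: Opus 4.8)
The plan is to exploit that $\mathcal{D}_\sigma=I-\nabla g_\sigma$ is a Lipschitz perturbation of the identity with perturbation constant $L<1$, which reduces everything to the descent lemma (\cref{Lem2.2}) for $g_\sigma$ together with elementary facts about convex conjugates. First I would verify that $\mathcal{D}_\sigma$ is injective: for all ${\bf x},{\bf x}'$,
\[
\|\mathcal{D}_\sigma({\bf x})-\mathcal{D}_\sigma({\bf x}')\|\ge \|{\bf x}-{\bf x}'\|-\|\nabla g_\sigma({\bf x})-\nabla g_\sigma({\bf x}')\|\ge (1-L)\|{\bf x}-{\bf x}'\|,
\]
so $\mathcal{D}_\sigma^{-1}$ is well defined on $\operatorname{Im}(\mathcal{D}_\sigma)$ and $\tfrac{1}{1-L}$-Lipschitz; in particular $\phi_\sigma$ in \cref{phi_sigma} is well defined. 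It is also convenient to record that $\mathcal{D}_\sigma=\nabla h_\sigma$ with $h_\sigma:=\tfrac12\|\cdot\|^2-g_\sigma$, which (by $L$-Lipschitzness of $\nabla g_\sigma$) is $(1-L)$-strongly convex with $(1+L)$-Lipschitz gradient.

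To prove the proximal identity, I would write an arbitrary ${\bf z}\in\operatorname{Im}(\mathcal{D}_\sigma)$ as ${\bf z}=\mathcal{D}_\sigma({\bf y})$, so that ${\bf y}-{\bf z}=\nabla g_\sigma({\bf y})$, and expand squares in \cref{phi_sigma} to obtain
\[
\phi_\sigma({\bf z})+\tfrac12\|{\bf z}-{\bf x}\|^2 = g_\sigma({\bf y})-\langle\nabla g_\sigma({\bf y}),{\bf y}-{\bf x}\rangle+\tfrac12\|{\bf y}-{\bf x}\|^2.
\]
Applying \cref{Lem2.2} to $g_\sigma$ bounds the right-hand side below by $g_\sigma({\bf x})+\tfrac{1-L}{2}\|{\bf y}-{\bf x}\|^2\ge g_\sigma({\bf x})$, with equality precisely when ${\bf y}={\bf x}$, i.e.\ when ${\bf z}=\mathcal{D}_\sigma({\bf x})$. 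Since the objective equals $+\infty$ off $\operatorname{Im}(\mathcal{D}_\sigma)$, this shows $\mathcal{D}_\sigma({\bf x})$ is the unique minimizer, i.e.\ $\operatorname{prox}_{\phi_\sigma}({\bf x})=\mathcal{D}_\sigma({\bf x})$.

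For the weak convexity and the Lipschitz bound on $\nabla\phi_\sigma$, I would pass to the conjugate of $h_\sigma$. Using Fenchel's equality together with $\nabla h_\sigma^{*}=(\nabla h_\sigma)^{-1}=\mathcal{D}_\sigma^{-1}$, a short computation shows $\phi_\sigma=h_\sigma^{*}-\tfrac12\|\cdot\|^2$ on $\operatorname{Im}(\mathcal{D}_\sigma)$. Since $h_\sigma$ is $(1+L)$-smooth, $h_\sigma^{*}$ is $\tfrac{1}{1+L}$-strongly convex, hence $\phi_\sigma+\tfrac{L}{2(L+1)}\|\cdot\|^2=h_\sigma^{*}-\tfrac{1}{2(L+1)}\|\cdot\|^2$ is convex, i.e.\ $\phi_\sigma$ is $\tfrac{L}{L+1}$-weakly convex. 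For the gradient, differentiating $\phi_\sigma({\bf z})=g_\sigma({\bf y}({\bf z}))-\tfrac12\|{\bf y}({\bf z})-{\bf z}\|^2$ with ${\bf y}({\bf z})=\mathcal{D}_\sigma^{-1}({\bf z})$ and using ${\bf y}({\bf z})-{\bf z}=\nabla g_\sigma({\bf y}({\bf z}))$ makes the Jacobian terms cancel, leaving $\nabla\phi_\sigma({\bf z})=\nabla g_\sigma(\mathcal{D}_\sigma^{-1}({\bf z}))$; this is a composition of an $L$-Lipschitz map and a $\tfrac{1}{1-L}$-Lipschitz map, hence $\tfrac{L}{1-L}$-Lipschitz. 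Finally, $\phi_\sigma\ge g_\sigma$ follows from $\phi_\sigma({\bf z})=g_\sigma({\bf y})-\tfrac12\|\nabla g_\sigma({\bf y})\|^2$ (with ${\bf z}={\bf y}-\nabla g_\sigma({\bf y})$) together with the upper bound $g_\sigma({\bf z})\le g_\sigma({\bf y})-\tfrac{2-L}{2}\|\nabla g_\sigma({\bf y})\|^2\le g_\sigma({\bf y})-\tfrac12\|\nabla g_\sigma({\bf y})\|^2$ from \cref{Lem2.2}, again using $L\le 1$.

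The step I expect to require the most care is the bookkeeping on $\operatorname{Im}(\mathcal{D}_\sigma)$: one must check that $\mathcal{D}_\sigma$ is a bijection onto this set, that the conjugate identity $\phi_\sigma=h_\sigma^{*}-\tfrac12\|\cdot\|^2$ is valid there, and that $\phi_\sigma$ is differentiable there — the last point needing $\mathcal{D}_\sigma^{-1}\in C^1$, which follows from the inverse function theorem under a $C^2$ assumption on $g_\sigma$ (the eigenvalues of $\nabla\mathcal{D}_\sigma=I-\nabla^2 g_\sigma$ lying in $[1-L,1+L]\subset(0,\infty)$), or alternatively from a monotone-operator argument. Everything else is a routine expansion of squares combined with \cref{Lem2.2}.
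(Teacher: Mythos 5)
This proposition is not proved in the paper at all: it is imported verbatim from \cite[Proposition 3.1]{hurault2022proximal}, so there is no in-paper argument to match against. Your blind proof is essentially correct and self-contained, and it differs from the cited source mainly in one respect: Hurault et al.\ observe that $\mathcal{D}_\sigma=\nabla h_\sigma$ with $h_\sigma=\tfrac12\|\cdot\|^2-g_\sigma$ convex and then invoke the Gribonval--Nikolova characterization of proximity operators (gradients of convex functions are proximal maps, with $\phi_\sigma=h_\sigma^*-\tfrac12\|\cdot\|^2$) to get the prox identity, whereas you verify the prox identity directly by parametrizing $\operatorname{Im}(\mathcal{D}_\sigma)$ as ${\bf z}=\mathcal{D}_\sigma({\bf y})$, expanding squares, and applying the descent lemma (\cref{Lem2.2}) with $L<1$ to show the objective is minimized uniquely at ${\bf y}={\bf x}$; your computation $\phi_\sigma({\bf z})+\tfrac12\|{\bf z}-{\bf x}\|^2=g_\sigma({\bf y})-\langle\nabla g_\sigma({\bf y}),{\bf y}-{\bf x}\rangle+\tfrac12\|{\bf y}-{\bf x}\|^2\ge g_\sigma({\bf x})+\tfrac{1-L}{2}\|{\bf y}-{\bf x}\|^2$ is correct, and as a byproduct it also gives the bound $\phi_\sigma\ge g_\sigma$ quoted in the statement. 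Your treatment of the remaining claims (conjugate identity $\phi_\sigma=h_\sigma^*-\tfrac12\|\cdot\|^2$ on $\operatorname{Im}(\mathcal{D}_\sigma)$, weak convexity from $\tfrac1{1+L}$-strong convexity of $h_\sigma^*$, the cancellation giving $\nabla\phi_\sigma=\nabla g_\sigma\circ\mathcal{D}_\sigma^{-1}$ and hence the $\tfrac{L}{1-L}$-Lipschitz bound) coincides with the route taken in the source, so the net effect of your approach is a more elementary argument that avoids citing the proximity-operator characterization, at the cost of re-deriving what that result packages. The caveat you flag is the right one: the weak-convexity statement really concerns $\phi_\sigma$ restricted to $\operatorname{Im}(\mathcal{D}_\sigma)$ (the extension by $+\infty$ is convex-plus-quadratic only if that image is convex), and differentiability of $\mathcal{D}_\sigma^{-1}$ uses $g_\sigma\in\mathcal{C}^2$ with $\|\nabla^2 g_\sigma\|\le L<1$; both points are consistent with how the paper later uses the proposition, e.g.\ the convexity of $\operatorname{Im}(\mathcal{D}_\sigma)$ assumed for \cref{alg:3} and the standing $\mathcal{C}^2$ assumption on $g_\sigma$ in \cref{sec4}.
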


Drawing upon the \cref{pro3.1}, we are interested in developing the extrapolated PnP-DYS algorithm, with a plugged denoiser $\mathcal{D}_\sigma$ in \cref{denoiser} that corresponds to the proximal operator of a nonconvex functional $\phi_\sigma$ in \cref{phi_sigma}. To do so, we turn to target the optimization problems as follows:
\begin{equation}\label{pnp_problem}
  \min F_{\gamma, \sigma}({\bf x})=f({\bf x})+\frac{1}{\gamma}\phi_\sigma({\bf x})+ h({\bf x}),  
\end{equation}
where $f$ is a (possibly nonconvex) data-fidelity term, $h$ is differential with Lipschitz continus gradient, $\gamma$ is a regularization parameter and $\phi_\sigma$ is defined as in \cref{pro3.1} from the function $g_\sigma$ satisfying $\mathcal{D}_\sigma=I-\nabla g_\sigma$. In our analysis, to use \cref{pro3.1}, $g_\sigma$ is assumed $\mathcal{C}^2$ with $L$-Lipschitz continuous gradient $(L<1)$. We also assume $f$ and $g_\sigma$ bounded from below. From \cref{pro3.1}, we get that $\phi_\sigma$ and thus $F_{\lambda, \sigma}$ are also bounded from below. In the following, we develop two extrapolated PnP-DYS methods depending on whether $f$ in \cref{pnp_problem} exhibits smoothness and discuss their theoretical convergence.

{\h According to \cite[Lemma 1]{hurault2023convergent}, $\phi_\sigma({\bf x})$ in \cref{phi_sigma} satisfies the Kurdyka-{\L}ojasiewicz (KL) property if $g_\sigma$ is real analytic \cite{krantz2002primer} in a neighborhood of $\bf x\in\mathbb{R}^n$ and its Jacobian matrix $Jg_\sigma ({\bf x})$ is nonsingular. Note that the real analytic property of $g_\sigma$ can be ensured for a broader range of deep neural networks. Meanwhile, the nonsingularity of $Jg_\sigma ({\bf x})$ can be guaranteed by assuming $L<1$ as discussed in \cite{hurault2023convergent}. For more discussions on general conditions under which the KL property holds for deep neural networks, we refer to \cite{barakat2020convergence,castera2021inertial, zeng2019global}. Therefore, selecting a neural network for $g_\sigma$ that guarantees the KL property of $\phi_\sigma({\bf x})$ during implementation is not a difficult task.
}

\subsection{When $f$ is smooth with Lipschitz continuous gradient}\label{subsec_4.1}
In this subsection, we consider the case that $f$ in \cref{pnp_problem} is differentiable with Lipschitz continuous gradient. 
In this case, we replace the second proximal subproblem with a learned denoiser $\mathcal{D}_\sigma$ in \cref{denoiser}, and produce a smooth extrapolated PnP-DYS method detailed in \cref{alg:buildtree}. Actually, \cref{alg:buildtree} reduces to the extrapolated versions, i.e., the accelerated versions, of PnP-DRS and PnP-FBS methods when $h=0$ and $f=0$, respectively. Notably, these specific cases have not been explored in previous literature to the best of our knowledge.

\vspace{-0.05in}
\begin{algorithm}
\caption{A smooth extrapolated PnP-DYS method}
\label{alg:buildtree}
\begin{algorithmic}
\STATE{Choose the parameters $\alpha\geq 0$ and $\gamma>0$. Given ${\bf x}^0$ and ${\bf x}^{-1}={\bf x}^0$, set $k=0$.}
\WHILE{the stopping criteria is not satisfied,}
\STATE{\vspace{-0.5cm}
\begin{equation*}
\left\{\begin{aligned}
{\bf w}^k &= {\bf x}^k+\alpha({\bf x}^k-{\bf x}^{k-1}),\\ 
 {\bf y}^{k+1} &= {\rm Prox}_{\gamma f}\left({\bf w}^k\right), \\ 
 {\bf z}^{k+1} &= \mathcal{D}_\sigma\left(2 {\bf y}^{k+1}-\gamma \nabla h({\bf y}^{k+1})-{\bf w}^k\right), \\ 
 {\bf x}^{k+1} &= {\bf w}^k+\left({\bf z}^{k+1}-{\bf y}^{k+1}\right) .
\end{aligned}\right.     
 \end{equation*}
}\vspace{-0.3cm}
\ENDWHILE
\end{algorithmic}
\end{algorithm}

Next, we discuss the convergence property of \cref{alg:buildtree} for the explicit optimization problem \cref{pnp_problem}. Before the analysis, we define
\begin{equation*}
\begin{aligned}
\mathcal{\widetilde H}_\gamma\left({\bf y}, {\bf z},{\bf x}\right)=   f({\bf y})+\frac{1}{\gamma}\phi_\sigma({\bf z})+h({\bf y})+\frac{1}{2 \gamma}\|{\bf y}-{\bf x}-\gamma \nabla h({\bf y})\|^2-\frac{1}{2 \gamma}\|{\bf z}-{\bf x}-\gamma \nabla h({\bf y})\|^2,
\end{aligned}
\end{equation*} 
and
\begin{equation*}
\begin{aligned}
\widetilde\Theta_{\alpha,\gamma}\left({\bf y},{\bf z},{\bf x},{\bf x}_1,{\bf x}_2\right)= \mathcal{\widetilde H}_\gamma({\bf y}, {\bf z},{\bf x})+\frac{\alpha^2}{2\gamma}\|{\bf x}_1-{\bf x}_2\|^2.
\end{aligned}
\end{equation*}

In the following, we present the convergence results of \cref{alg:buildtree}. 
\begin{theorem}\label{rev_theo4.1}
  Let $g_\sigma: \mathbb{R}^n \rightarrow \mathbb{R} \cup\{+\infty\}$ of class $\mathcal{C}^2$ with $L$-Lipschitz continuous gradient with $L<1$, and $\mathcal{D}_\sigma=I-\nabla g_\sigma$. Let $f: \mathbb{R}^n \rightarrow \mathbb{R} \cup\{+\infty\}$ and $h$ be  differentiable with $L_f$- and $L_h$-Lipschitz continuous gradient, {  and let $l_f$ be a constant such that $f+\frac{l_f}{2}\|\cdot\|$ is convex}. Suppose that $f$, $g_\sigma$ and $h$ are bounded from below, Then, for $\alpha$ and $\gamma$ satisfying  \cref{ass2} with $L_{f_1}:=L_f$ { and $l:=l_f$}, the sequence $\left\{({\bf y}^k, {\bf z}^k, {\bf x}^k)\right\}_{k\geq 1}$ generated by \cref{alg:buildtree} which is assumed to be bounded verify that
\begin{itemize}
    \item[(i)] $\left\{\widetilde\Theta_{\alpha,\gamma}\left({\bf y}^k, {\bf z}^k,{\bf x}^k,{\bf x}^{k-1},{\bf x}^{k-2}\right)\right\}_{k\geq 1}$ is nonincreasing and converges.
    \item[(ii)] { the sequences $\{\Delta_{\bf x}^k\}$, $\{\Delta_{\bf y}^k\}$ and $\{{\bf y}^k-{\bf z}^k\}$ vanish with  rate $\min_{k\leq K}\|\Delta_{\bf x}^k\|=\mathcal{O}(\frac{1}{\sqrt{K}})$, $\min_{k\leq K}\|\Delta_{\bf y}^k\|=\mathcal{O}(\frac{1}{\sqrt{K}})$, and $\min_{k\leq K}\|{\bf y}^k-{\bf z}^k\|=\mathcal{O}(\frac{1}{\sqrt{K}})$, respectively.} 
    \item[(iii)]  any cluster point ${\bf u}^*:=({\bf y}^*,{\bf z}^*,{\bf x}^*,{\bf x}^*,{\bf x}^*)$  of sequence $\left\{{\bf u}^k\right\}_{k\geq 1}$ is a critical point of the problem \cref{pnp_problem}, i.e., it holds that $0\in \partial F_{\gamma,\sigma}({\bf y}^*)$.
    \item[(iv)] if $h$ is a twice continuously differentiable
function with a bounded Hessian, i.e., there exists a constant $M > 0$ such that $\| \nabla^2 h({\bf y})\| \leq  M,~ \forall ~ {\bf y}\in\mathbb{R}^n$, and $F_{\gamma,\sigma}$ in \cref{pnp_problem} is a KL function. Then, the whole sequence $\{{\bf u}^k\}_{k\geq 1} $ is convergent.
\end{itemize}
\end{theorem}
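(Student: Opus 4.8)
The plan is to recognize that \cref{alg:buildtree} is nothing but \cref{alg3.1} applied to problem \cref{pnp_problem}, and then to invoke the convergence theory of \Cref{sec:alg} essentially verbatim. Concretely, I would set $f_1:=f$, $f_2:=\tfrac1\gamma\phi_\sigma$, and keep $h$ unchanged. By \cref{pro3.1} we have $\mathcal{D}_\sigma={\rm prox}_{\phi_\sigma}={\rm Prox}_{\gamma f_2}$, so the ${\bf z}$-update ${\bf z}^{k+1}=\mathcal{D}_\sigma(2{\bf y}^{k+1}-\gamma\nabla h({\bf y}^{k+1})-{\bf w}^k)$ coincides with ${\rm Prox}_{\gamma f_2}(2{\bf y}^{k+1}-\gamma\nabla h({\bf y}^{k+1})-{\bf w}^k)$; the objective $F_{\gamma,\sigma}=f_1+f_2+h$ has exactly the form of \cref{general}; and the auxiliary functions $\mathcal{\widetilde H}_\gamma$ and $\widetilde\Theta_{\alpha,\gamma}$ are precisely $\mathcal{H}_\gamma$ and $\Theta_{\alpha,\gamma}$ of \cref{merit1}--\cref{merit2} under this identification.

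The first step is then to check that \cref{ass1} transfers (\cref{ass2} with $L_{f_1}:=L_f$ and $l:=l_f$ is granted by hypothesis). Parts (i) and (ii) of \cref{ass1} hold because $\nabla f$ and $\nabla h$ are $L_f$- and $L_h$-Lipschitz. For part (iii): $\phi_\sigma$ is proper, lower semicontinuous and prox-bounded by \cref{pro3.1}, hence $f_2=\tfrac1\gamma\phi_\sigma$ is a proper closed function; and $F_{\gamma,\sigma}$ is bounded from below since $f$, $g_\sigma$, $h$ are bounded below and $\phi_\sigma\geq g_\sigma$ by \cref{pro3.1}. The weak-convexity modulus furnishes the constant $l$: since $\nabla f$ is $L_f$-Lipschitz there is $l_f\in[-L_f,L_f]$ with $f+\tfrac{l_f}{2}\|\cdot\|^2$ convex, exactly as in the discussion following \cref{ass1}, and this $l_f$ is the $l$ appearing in $\Lambda(\gamma)$.

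With these identifications in place, the four claims follow directly from \Cref{sec:alg}. Claim (i) is \cref{lemma_descent} (monotone nonincrease of $\Theta_{\alpha,\gamma}=\widetilde\Theta_{\alpha,\gamma}$, for any $\tau\in(\alpha,\Lambda(\gamma))$) combined with the lower bound obtained in the proof of \cref{lemma_descent_add}, which here reads $\widetilde\Theta_{\alpha,\gamma}({\bf u}^k)\geq F_{\gamma,\sigma}({\bf z}^k)+\big(\tfrac1{2\gamma}-\tfrac{L_f+L_h}{2}\big)\|{\bf y}^k-{\bf z}^k\|^2$ with $\tfrac1{2\gamma}-\tfrac{L_f+L_h}{2}>0$ since $\gamma<\tfrac1{L_f+L_h}$; monotone plus bounded below gives convergence. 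Claim (ii) is \cref{lemma_descent_add}(ii) verbatim. Claim (iii) is \cref{theo1}(i): the cluster-point relation $0\in\nabla f_1({\bf y}^*)+\partial f_2({\bf y}^*)+\nabla h({\bf y}^*)$ becomes $0\in\nabla f({\bf y}^*)+\tfrac1\gamma\partial\phi_\sigma({\bf y}^*)+\nabla h({\bf y}^*)=\partial F_{\gamma,\sigma}({\bf y}^*)$, using $\partial(\tfrac1\gamma\phi_\sigma)=\tfrac1\gamma\partial\phi_\sigma$ and the sum rule $\partial(\psi+\eta)=\partial\psi+\nabla\eta$ for smooth $\eta$ recalled after \cref{def2.1}. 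Claim (iv) is \cref{theo2}: under the bounded-Hessian hypothesis on $h$ (needed so that \cref{lemma4} applies) and with $F_{\gamma,\sigma}$ a KL function, the whole sequence $\{{\bf u}^k\}_{k\geq1}$ has finite length and converges.

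The only genuinely non-mechanical point, and hence where I would be most careful, is the verification of \cref{ass1}(iii) for $\phi_\sigma$: that $\phi_\sigma$ is proper, lower semicontinuous and prox-bounded despite being defined only implicitly on $\operatorname{Im}(\mathcal{D}_\sigma)$. This rests entirely on \cref{pro3.1} (from \cite{hurault2022proximal}) together with the $\mathcal{C}^2$, $L$-Lipschitz-gradient assumption on $g_\sigma$ with $L<1$. A secondary point is that the proof of \cref{theo2} ultimately applies the uniformized KL property (\cref{Lem2.1}) on the cluster set of $\{{\bf u}^k\}$, so one should record — as noted after \cref{pro3.1}, following \cite{hurault2023convergent} — that choosing $g_\sigma$ real analytic (with $L<1$ ensuring nonsingular Jacobian) makes $\phi_\sigma$, and therefore $F_{\gamma,\sigma}$, a KL function, which is exactly the standing hypothesis in (iv). Beyond these remarks there is no new obstacle: the value of the general framework of \Cref{sec:alg} is precisely that this theorem is a corollary of it.
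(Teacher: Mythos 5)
Your proposal follows the same route as the paper: identify $f_1:=f$, $f_2:=\tfrac1\gamma\phi_\sigma$ so that \cref{alg:buildtree} becomes \cref{alg3.1} applied to \cref{pnp_problem}, then read off (i)--(iv) from \cref{lemma_descent}, \cref{lemma_descent_add}, \cref{theo1}, and \cref{theo2}. Your additional verification of \cref{ass1}(iii) (properness/closedness of $\phi_\sigma$ and lower boundedness of $F_{\gamma,\sigma}$ via $\phi_\sigma\geq g_\sigma$) is consistent with the paper's setup discussion and is correct.
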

\begin{proof}
Since $f$ and $h$ are differentiable with $L_f$- and $L_h$-Lipschitz continuous gradient, the problem \cref{pnp_problem} is a special form of \cref{general} with $f_1:=f$ and $f_2:=\frac{1}{\gamma}\phi_\sigma$. Therefore, it follows from \cref{lemma_descent} and \cref{lemma_descent_add} that (i) and (ii) hold. The assertion (iii) can be obtained according to \cref{theo1}, and the conclusion (iv) can be derived from \cref{theo2}. This completes the proof.
\end{proof}

\subsection{When $f$ is nonsmooth}\label{subsec_4.2}
To cope with the problem \cref{pnp_problem} with a possibly nondifferentiable function $f$,
 we propose a nonsmooth extrapolated PnP-DYS method in \cref{alg:3}. In this case, we replace the first proximal subproblem in \cref{alg:3} by a learned denoiser $\mathcal{D}_\sigma$ defined in \cref{denoiser} to guarantee the theoretical convergence.
\begin{algorithm}[t!]
\caption{A nonsmooth extrapolated PnP-DYS method}
\label{alg:3}
\begin{algorithmic}
\STATE{Choose the parameters $\alpha\geq 0$ and $\gamma>0$. Given ${\bf x}^0$ and ${\bf x}^{-1}={\bf x}^0$, set $k=0$.}
\WHILE{the stopping criteria is not satisfied,}
\STATE{\vspace{-0.5cm}
\begin{equation*}
\left\{\begin{aligned}
{\bf w}^k &= {\bf x}^k+\alpha({\bf x}^k-{\bf x}^{k-1}),\\
 {\bf y}^{k+1} &= \mathcal{D}_\sigma\left({\bf w}^k\right), \\
 {\bf z}^{k+1} &= {\rm Prox}_{\gamma f}\left(2 {\bf y}^{k+1}-\gamma \nabla h({\bf y}^{k+1})-{\bf w}^k\right), \\
 {\bf x}^{k+1} &= {\bf w}^k+\left({\bf z}^{k+1}-{\bf y}^{k+1}\right) .
\end{aligned}\right.     
 \end{equation*}
}\vspace{-0.3cm}
\ENDWHILE
\end{algorithmic}
\end{algorithm}

In order to analyze the convergence of \cref{alg:3}, we define
\begin{equation*}
\begin{aligned}
\mathcal{\widehat H}_\gamma\left({\bf y},{\bf z},{\bf x}\right)=   \frac{1}{\gamma}\phi_\sigma({\bf y})+f({\bf z})+h({\bf y})+\frac{1}{2 \gamma}\|{\bf y}-{\bf x}-\gamma \nabla h({\bf y})\|^2-\frac{1}{2 \gamma}\|{\bf z}-{\bf x}-\gamma \nabla h({\bf y})\|^2,
\end{aligned}
\end{equation*} 
and
\begin{equation*}
\begin{aligned}
\widehat\Theta_{\alpha,\gamma}\left({\bf y},{\bf z},{\bf x},{\bf x}_1,{\bf x}_2\right)= \mathcal{\widehat H}_\gamma({\bf y},{\bf z},{\bf x})+\frac{\alpha^2}{2\gamma}\|{\bf x}_1-{\bf x}_2\|^2.
\end{aligned}
\end{equation*}

Now we give the convergence results of \cref{alg:3} based on the conclusions in \Cref{sec:alg} and the discussions in \cite{hurault2022proximal}.
\begin{theorem}
Let $g_\sigma: \mathbb{R}^n \rightarrow \mathbb{R} \cup\{+\infty\}$ of class $\mathcal{C}^2$ with $L$-Lipschitz continuous gradient with $L<1$, and $\mathcal{D}_\sigma=I-\nabla g_\sigma$  with ${\rm Im}(\mathcal{D}_\sigma)$ being convex. Let $f: \mathbb{R}^n \rightarrow \mathbb{R} \cup\{+\infty\}$ is a proper closed function and $h$ is differentiable $L_h$-Lipschitz continuous gradient. Suppose that $f$, $g_\sigma$ and $h$ are bounded from below, Then, for $\alpha$ and $\gamma$ satisfying \cref{ass2} with $L_{f_1}:=\frac{L}{\gamma(1-L)}$ and $l:=\frac{L}{\gamma(L+1)}$, the sequence $\left\{({\bf y}^k, {\bf z}^k, {\bf x}^k)\right\}_{k\geq 1}$ generated by \cref{alg:3} which is assumed to be bounded verify that
\begin{itemize}
    \item[(i)] $\left\{\widehat\Theta_{\alpha,\gamma}\left({\bf y}^k, {\bf z}^k,{\bf x}^k,{\bf x}^{k-1},{\bf x}^{k-2}\right)\right\}_{k\geq 1}$ is nonincreasing and converges.
    \item[(ii)] { the sequences $\{\Delta_{\bf x}^k\}$, $\{\Delta_{\bf y}^k\}$ and $\{{\bf y}^k-{\bf z}^k\}$  vanish with  rate $\min_{k\leq K}\|\Delta_{\bf x}^k\|=\mathcal{O}(\frac{1}{\sqrt{K}})$, $\min_{k\leq K}\|\Delta_{\bf y}^k\|=\mathcal{O}(\frac{1}{\sqrt{K}})$, and $\min_{k\leq K}\|{\bf y}^k-{\bf z}^k\|=\mathcal{O}(\frac{1}{\sqrt{K}})$, respectively.} 
    \item[(iii)]  any cluster point ${\bf u}^*:=({\bf y}^*,{\bf z}^*,{\bf x}^*,{\bf x}^*,{\bf x}^*)$  of sequence $\left\{{\bf u}^k\right\}_{k\geq 1}$ is a critical point of the problem \cref{pnp_problem}, i.e., it holds that $0\in \partial F_{\gamma,\sigma}({\bf y}^*)$.
    \item[(iv)] if $h$ is a twice continuously differentiable
function with a bounded Hessian, i.e., there exists a constant $M > 0$ such that $\| \nabla^2 h({\bf y})\| \leq  M, \forall {\bf y}\in\mathbb{R}^n$, and $F_{\gamma,\sigma}$ in \cref{pnp_problem} is a KL function. Then, the whole sequence $\{{\bf u}^k\}_{k\geq 1} $ is convergent.
\end{itemize}
\end{theorem}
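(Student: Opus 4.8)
The plan is to recognize \cref{alg:3} as a concrete instance of the extrapolated DYS method \cref{alg3.1} and then inherit the convergence theory of \Cref{sec:alg}. Concretely, I would put $f_1:=\tfrac1\gamma\phi_\sigma$ and $f_2:=f$, keeping the same $h$; then \cref{pnp_problem} is an instance of \cref{general} with objective $F=f_1+f_2+h=F_{\gamma,\sigma}$, and substituting these choices into \cref{merit1} and \cref{merit2} reproduces exactly the functions $\mathcal{\widehat H}_\gamma$ and $\widehat\Theta_{\alpha,\gamma}$ defined above. By \cref{pro3.1} one has $\mathcal{D}_\sigma(\cdot)=\operatorname{prox}_{\phi_\sigma}(\cdot)={\rm Prox}_{\gamma f_1}(\cdot)$, so the update ${\bf y}^{k+1}=\mathcal{D}_\sigma({\bf w}^k)$ is precisely ${\bf y}^{k+1}={\rm Prox}_{\gamma f_1}({\bf w}^k)$, while the ${\bf z}$- and ${\bf x}$-updates of \cref{alg:3} match those of \cref{alg:2} verbatim. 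Thus \cref{alg:3} on \cref{pnp_problem} coincides with \cref{alg3.1} on \cref{general}, and the task reduces to checking \cref{ass1} and \cref{ass2} for this identification.

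Next I would verify the assumptions and read off the conclusions. \cref{ass1}(ii) is the hypothesis on $h$; \cref{ass1}(iii) holds because $f_2=f$ is proper and closed by assumption, $f$ is prox-bounded with threshold $+\infty$ (being bounded below, so ${\rm Prox}_{\gamma f}$ is nonempty and compact for every $\gamma>0$ by \cite[Theorem 1.25]{rockafellar2009variational}), and $F_{\gamma,\sigma}$ is bounded below since $f$ and $h$ are and $\phi_\sigma\ge g_\sigma\ge\inf g_\sigma>-\infty$ by \cref{pro3.1}. For \cref{ass1}(i) and the constant $l$ I would invoke \cref{pro3.1} again: $\nabla\phi_\sigma$ is $\tfrac{L}{1-L}$-Lipschitz on ${\rm Im}(\mathcal{D}_\sigma)$, hence $\nabla f_1=\tfrac1\gamma\nabla\phi_\sigma$ has Lipschitz constant $L_{f_1}=\tfrac{L}{\gamma(1-L)}$; and $\phi_\sigma$ is $\tfrac{L}{L+1}$-weakly convex, so $f_1+\tfrac{l}{2}\|\cdot\|^2$ is convex with $l=\tfrac{L}{\gamma(L+1)}\in[-L_{f_1},L_{f_1}]$, the strong-convexity modulus $\tfrac1\gamma-l=\tfrac1{\gamma(L+1)}>0$ that drives \cref{lemma_descent} being automatically positive. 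Since \cref{ass2} with these $L_{f_1}$ and $l$ is assumed in the statement, the remaining conclusions are then exactly \cref{lemma_descent} (monotonicity of $\widehat\Theta_{\alpha,\gamma}$, hence with its lower bound its convergence, i.e.\ (i)), \cref{lemma_descent_add} (summability of $\|\Delta_{\bf x}^k\|^2$ and $\|\Delta_{\bf y}^k\|^2$ and the $\mathcal{O}(1/\sqrt K)$ rates in (ii)), \cref{theo1} (every cluster point satisfies $0\in\tfrac1\gamma\nabla\phi_\sigma({\bf y}^*)+\partial f({\bf y}^*)+\nabla h({\bf y}^*)=\partial F_{\gamma,\sigma}({\bf y}^*)$, i.e.\ (iii)), and, adjoining the bounded-Hessian hypothesis and the KL property of $F_{\gamma,\sigma}$, \cref{lemma4} together with \cref{theo2} (finite length of $\{{\bf u}^k\}$, i.e.\ (iv)).

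The hard part is that, unlike in \cref{rev_theo4.1} where $\tfrac1\gamma\phi_\sigma$ sits in the second, arbitrary-proper-closed slot, here $f_1=\tfrac1\gamma\phi_\sigma$ is \emph{not} finite-valued and $\mathcal{C}^1$ on all of $\mathbb{R}^n$: $\phi_\sigma\equiv+\infty$ off ${\rm Im}(\mathcal{D}_\sigma)$ and is differentiable with Lipschitz gradient only on that set, which is why the extra hypothesis ``${\rm Im}(\mathcal{D}_\sigma)$ convex'' is imposed. I would argue that the proofs of \cref{lemma_descent}, \cref{lemma_descent_add}, \cref{lemma4}, \cref{theo1} and \cref{theo2} touch $f_1$ only at the ${\bf y}$-iterates, which all lie in ${\rm Im}(\mathcal{D}_\sigma)$ since ${\bf y}^{k+1}=\mathcal{D}_\sigma({\bf w}^k)$, and that convexity of ${\rm Im}(\mathcal{D}_\sigma)$ legitimizes each remaining step: the strong-convexity inequality comparing $f_1({\bf y}^k)$ with $f_1({\bf y}^{k+1})$ is valid because ${\bf y}^k$ and the minimizer ${\bf y}^{k+1}$ of $f_1+\tfrac1{2\gamma}\|{\bf w}^k-\cdot\|^2$ both lie in the convex set where $\phi_\sigma+\tfrac12\|\cdot\|^2$ is strongly convex; the bound $\|\nabla f_1({\bf y}^k)-\nabla f_1({\bf y}^{k+1})\|\le L_{f_1}\|\Delta_{\bf y}^{k+1}\|$ used in \cref{proof_eq7} holds because $[{\bf y}^k,{\bf y}^{k+1}]\subseteq{\rm Im}(\mathcal{D}_\sigma)$; the optimality condition \cref{optimality_y} reads $\nabla\phi_\sigma({\bf y}^{k+1})={\bf w}^k-{\bf y}^{k+1}$ by \cref{pro3.1}; the estimate of \cref{lemma4} uses $\nabla f_1({\bf y}^{k+1})=\tfrac1\gamma\nabla\phi_\sigma({\bf y}^{k+1})$, well defined and continuous on ${\rm Im}(\mathcal{D}_\sigma)$; and in \cref{theo1} the cluster point ${\bf y}^*$ stays in the (closed) set ${\rm Im}(\mathcal{D}_\sigma)$, so the limiting argument with the closed map $\partial f$, the continuous map $\tfrac1\gamma\nabla\phi_\sigma$ and the lower semicontinuity of $\phi_\sigma$ carries over---all these facts about $\mathcal{D}_\sigma$ and $\phi_\sigma$ coming from \cite{hurault2022proximal}. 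The step that genuinely changes, and where I expect most of the work, is the lower boundedness of $\widehat\Theta_{\alpha,\gamma}$: the argument in \cref{lemma_descent_add} evaluates $f_1$ at ${\bf z}^k$, which may lie outside ${\rm Im}(\mathcal{D}_\sigma)$, so instead I would bound $\widehat\Theta_{\alpha,\gamma}({\bf u}^k)\ge\mathcal{\widehat H}_\gamma({\bf y}^k,{\bf z}^k,{\bf x}^k)$ directly, using $\tfrac1\gamma\phi_\sigma({\bf y}^k)\ge\tfrac1\gamma\inf g_\sigma$, $f({\bf z}^k)\ge\inf f$, $h({\bf y}^k)\ge\inf h$, nonnegativity of the $\tfrac1{2\gamma}\|{\bf y}^k-{\bf x}^k-\gamma\nabla h({\bf y}^k)\|^2$ term, and boundedness of the iterates (which controls the single negative term $-\tfrac1{2\gamma}\|{\bf z}^k-{\bf x}^k-\gamma\nabla h({\bf y}^k)\|^2$ via continuity of $\nabla h$); this finite lower bound then feeds the summation and rate arguments of \cref{lemma_descent_add} unchanged.
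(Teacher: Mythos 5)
Your proposal is correct and follows essentially the same route as the paper: identify \cref{alg:3} as \cref{alg3.1} applied to \cref{pnp_problem} with $f_1=\tfrac{1}{\gamma}\phi_\sigma$ and $f_2=f$, use \cref{pro3.1} to supply $L_{f_1}=\tfrac{L}{\gamma(1-L)}$ and $l=\tfrac{L}{\gamma(L+1)}$, and then read off (i)--(iv) from \cref{lemma_descent}, \cref{lemma_descent_add}, \cref{theo1} and \cref{theo2}. The only difference is one of detail: the paper delegates the domain issues caused by $\phi_\sigma$ being finite and smooth only on the convex set ${\rm Im}(\mathcal{D}_\sigma)$ (and the attendant adjustment of the lower bound on $\widehat\Theta_{\alpha,\gamma}$) to the cited Appendix C.2 of Hurault et al., whereas you spell these points out explicitly, which is a legitimate filling-in of the same argument rather than a different proof.
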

\begin{proof}
It follows from \cref{pro3.1} that $\phi_\sigma$ is $\frac{L}{L+1}$-weakly convex and $\nabla \phi_\sigma$ is $ \frac{L}{1-L}$-Lipschitz on $\operatorname{Im} (\mathcal{D}_\sigma)$. Thus, the problem \cref{pnp_problem} can be seen as a special form of \cref{general} with $f_1=\frac{1}{\gamma}\phi_\sigma$ and $f_2=f$.  Since ${\rm Im}(\mathcal{D}_\sigma)$ is convex, it follows from \cite[Appendix C.2]{hurault2022proximal} and \cref{lemma_descent} that (i) and (ii) hold. 
According to the assumptions on $g_\sigma$, we know that $\mathcal{D}_\sigma$ is continuous on ${\rm Im}(\mathcal{D}_\sigma)$, and then the assertion (iii) can be obtained according to \cref{theo1}. Moreover, the conclusion (iv) can be derived from \cref{theo2}. This completes the proof.
\end{proof}
{ 
\begin{remark}
 As discussed in \cite{hurault2022proximal,pesquet2021learning}, one can ensure that the Lipschitz constant $L<1$ for $\nabla g_\sigma$ is to softly constrain it by penalizing the spectral norm of the Hessian of $g_\sigma$ in the denoiser training loss. This approach will be further explained in the experiments. 
In addition, if $L>1$, one can relax the deep prior with a parameter $\eta\in[0,1]$, given by $\mathcal{D}_\sigma^\eta=\eta \mathcal{D}_\sigma+(1-\eta)I$. It is important to note that the relaxed deep prior $\mathcal{D}_\sigma^\eta$ exhibits the same property as stated in \cref{pro3.1}. More specifically, $\mathcal{D}_\sigma^\eta$ continues to be the proximal operator of a certain {\h weakly convex} functional. As a result, the condition becomes $\eta L<1$, which can be easily guaranteed since $\eta\in[0,1]$. We refer to \cite[Subsection 3.4]{hurault2023convergent} for more discussions.
\end{remark}
}

\section{Numerical experiments}\label{sec:experiments}
In this section, we implement the extrapolated DYS algorithm with or without PnP denoiser on image deblurring and image super-resolution tasks, and compare numerical results with other advanced models and methods. All experiments are implemented with PyTorch on an NVIDIA RTX A6000 GPU. 

We consider the image restoration problem with both sparse-induced regularization and Tikhonov regularization, whose mathematical model can be read as 
\begin{equation}\label{image_deblurring}
    \min_{{\bf x}\in\mathbb{R}^n}    \frac{1}{2\nu^2}\Vert A{\bf x} - {\bf b}\Vert^2+r({\bf x})+ \frac{\beta}{2}\Vert {\bf x}\Vert^2,
\end{equation}
where $r(\cdot)$ is the sparse-induced regularizer which maybe nonconvex, $\bf b$ is the observation, $\nu$ is the Gaussian noise level and $A$ is the linear operator. 
When $A$ denotes the blur kernel, the model \cref{image_deblurring} corresponds to image deblurring problem, which aims to restore a clean image ${\bf x}^*$ from the observed image $\bf b$.
Additionally, if $A = SB$, where $B$ denotes the blur operator and $S$ is the standard $s$-fold downsampler (i.e., selecting the upper-left pixel for each distinct $s \times s$ patch), the model \cref{image_deblurring} reduces to the image super-resolution problem. This problem involves enhancing the resolution and quality of a low-resolution image to generate a high-resolution version of the same image.
We can see  that the model \cref{image_deblurring} falls into the form of \cref{general} with $f_1({\bf x})=\frac{1}{2\nu^2}\Vert A{\bf x} - {\bf b}\Vert^2$, $f_2({\bf x})=r({\bf x})$ and $h({\bf x})=\frac{\beta}{2}\Vert {\bf x}\Vert^2$. 
Additionally, the following model with sparse-induced regularization and box constraint is also widely used in solving image deblurring and image super-resolution problems: 
\begin{equation}\label{image_deblurring2}
    \min_{{\bf x}\in\mathcal{B}}    \frac{1}{2\nu^2}\Vert A{\bf x} - {\bf b}\Vert^2+r({\bf x}),
\end{equation}
where $\mathcal{B}$ is a convex box. 
Model \cref{image_deblurring2} is  a special form of \cref{general} if $r(\cdot)$ is smooth with $f_1({\bf x})=r({\bf x})$, $f_2({\bf x})=\delta_{\mathcal{B}}({\bf x})$ and $h({\bf x})=\frac{1}{2\nu^2}\Vert A{\bf x} - {\bf b}\Vert^2$, where $\delta_{\mathcal{B}}({\cdot})$ denotes the { indicator} function. 

In the experiments, we will consider two cases of $r(\cdot)$ for \cref{image_deblurring} and \cref{image_deblurring2} as follows:
\begin{itemize}
\item[1.]  $r({\bf x})=\|{\bf x}\|_{\rm TV}$, the { isotropic} total-variational (TV)  regularizer \cite{goldstein2009split,rudin1992nonlinear};
\item[2.] $r({\bf x})=\frac{1}{\gamma}\phi_\sigma({\bf x})$, the nonconvex regularizer in \cref{phi_sigma} induced by Gradient Step (GS) denoiser $\mathcal{D}_\sigma$.
\end{itemize}
We refer to the model \cref{image_deblurring} with the above two regularizers as TVTik and DeTik. Similarly, the model \cref{image_deblurring2} with both regularizers is denoted as TVBox and DeBox, respectively.
As discussed in \Cref{sec4}, DeTik can be solved by \cref{alg:buildtree}, while DeBox should be solved by \cref{alg:3} due to the nonsmoothness of $\delta_{\mathcal{B}}$.  For the classical TV-based models, i.e., TVTik and TVBox, the split Bregman algorithm is applicable. 
{\h More specifically, we import the image processing package `scikit-image' in Python with `skimage.restoration.denoise\_tv\_bregman' for solving the isotropic TV-subproblem with a maximum iteration of 100.} 
Certainly, \cref{alg3.1} also can be used to solve TVTik, as there are two smooth terms with Lipschitz continuous gradient involved in \cref{image_deblurring}. 
We initialize all the tested algorithms with ${\bf x}^{-1} = {\bf x}^0 = {\bf b}$. The algorithms are terminated when the relative difference between consecutive values of the objective function is less than $\varepsilon = 10^{-8}$ or the number of iterations exceeds $k_{\max} = 1000$.
 
As aforementioned, we utilize the deep GS denoiser to replace the traditional regularizer. Specifically, in the experiments, we employ the classical DRUNet \cite{zhang2021plug} as our denoiser $\mathcal{D}_\sigma$. DRUNet incorporates both U-Net and ResNet architectures and takes an additional noise level map as input, achieving state-of-the-art performance in Gaussian noise removal. 
To ensure $L<1$ of the Lipschitz constant of $\nabla g_\sigma$ in \cref{denoiser}, following the approach in \cite{pesquet2021learning}, we regularize the training loss of $\mathcal{D}_\sigma$ using the spectral norm of the Hessian of $g_\sigma$ as follows: 
\begin{equation}\label{loss1}
\mathcal{L}_S(\sigma)=\mathbb{E}_{{\bf x} \sim p, \xi_\sigma \sim \mathcal{N}(0, \sigma^2)}\left[\|\mathcal{D}_\sigma({\bf x}+\xi_\sigma)-{\bf x}\|^2+\mu \max (\|\nabla^2g_\sigma({\bf x}+\xi_\sigma)\|_S, 1-\epsilon)\right],
\end{equation}
where $p$ is the distribution of a dataset of clean images and $\Vert\cdot\Vert_S$ is the spectral norm. We set $\epsilon=0.1$ and $\mu=0.01$ according to \cite{hurault2022proximal}. Following the setting of \cite{hurault2022gradient}, we have retrained the DRUNet \cite{zhang2021plug} with loss function \cref{loss1} on the Berkeley segmentation dataset, Waterloo Exploration Database, DIV2K dataset, and Flick2K dataset.
For the image deblurring problem, ten different blur kernels\footnote{\url{https://github.com/Huang-chao-yan/convergent_pnp/tree/main/kernels}} 
(from Ker1 to Ker10) and three noise levels: $\nu=\{2.55, 7.65, 12.75\}$ will be used to simulate the degraded image.

\subsection{Effect of extrapolation}
We first test the effectiveness of extrapolation parameter $\alpha$ by applying \cref{alg:buildtree} to solve the DeTik model. For the DeTik model, we know that $L_{f_1}=\frac{1}{\nu^2}\lambda_{\max}(A^\top A)$, $l=-L_{f_1}$ and $L_h=\beta$, where $\lambda_{\max}$ denotes the maximal eigenvalue of a given matrix. In the experiment,  we set the model parameter $\beta\in [0.0005, 0.001]$ for different noise levels in \cref{image_deblurring}. {\y It follows from \cref{ass2} that $0\leq\alpha<\Lambda(\gamma)$. Therefore, for a given and fixed $\gamma$ that satisfies \cref{gamma_range1}, we test the values of $\alpha=\{0,0.25,0.50,0.75,0.99\}*\Lambda(\gamma)$  by performing a numerical comparison of the computational cost and the quality of recovery for the image deblurring problem.}

\begin{figure}[h!]
	\begin{minipage}{0.3\linewidth}
		\centering
	\centerline{\includegraphics[width=2.in]{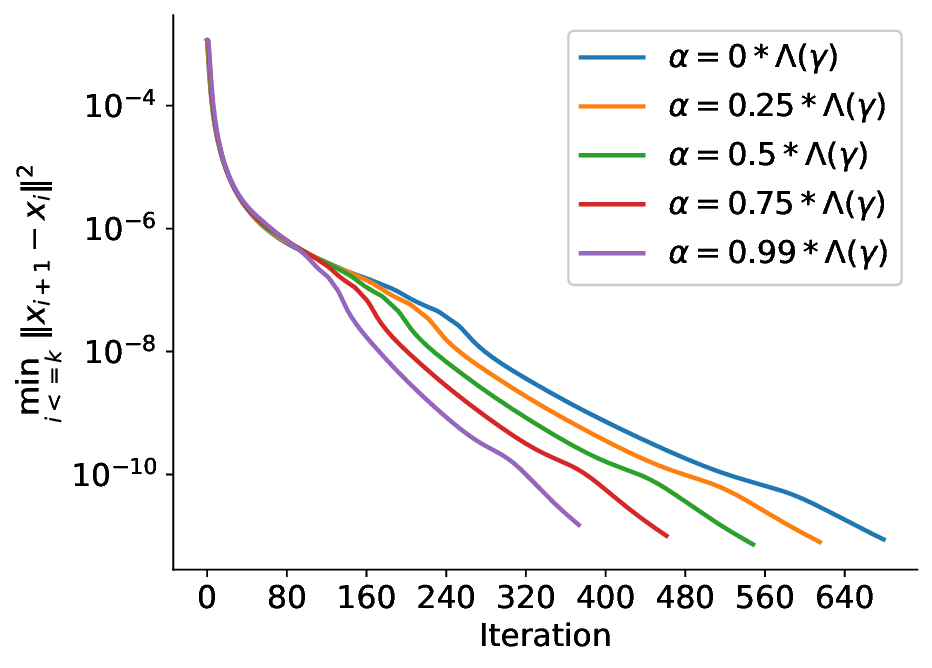}
		}\vspace{-0.03in}
		\centerline{}
	\end{minipage}  \hspace{0.15in}
 \begin{minipage}{0.3\linewidth}
		\centering
		\centerline{\includegraphics[width=2.in]{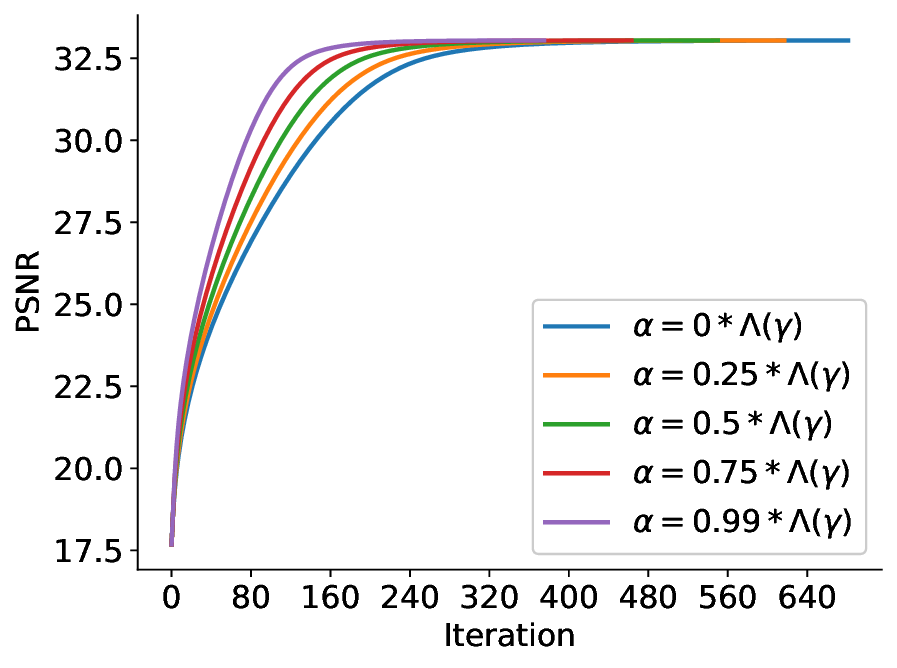}
		}\vspace{-0.03in}
		\centerline{} 
	\end{minipage} \hspace{0.15in}
  \begin{minipage}{0.3\linewidth}
		\centering
		\centerline{\includegraphics[width=2.in]{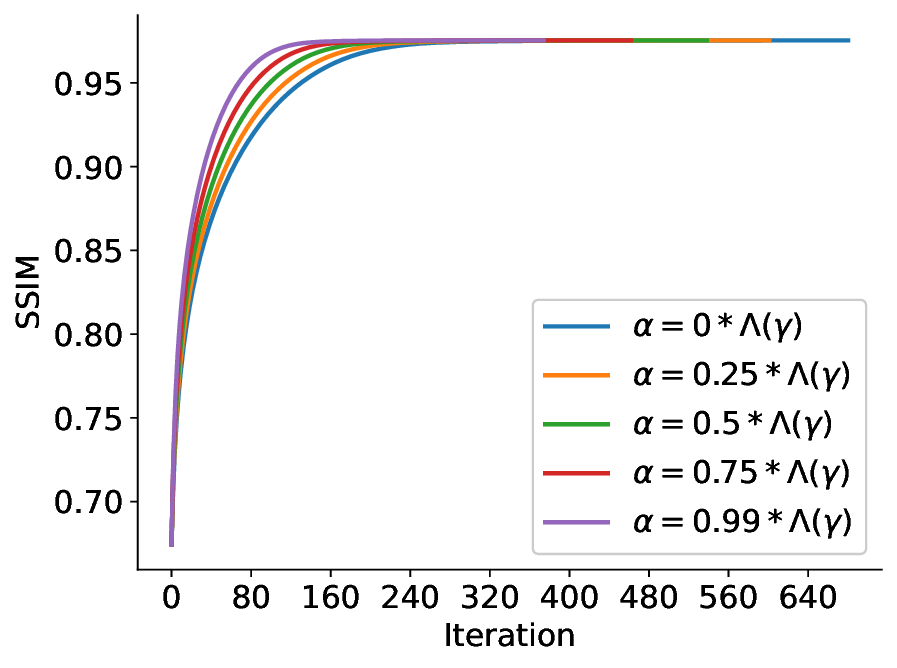}
		}\vspace{-0.03in}
		\centerline{} 
	\end{minipage} 
\vspace{-0.2cm} \caption{\h Effect of $\alpha$ in \cref{alg:buildtree} for solving DeTik model on `butterfly' with Ker1 and noise level $2.55$. Increasing the extrapolation parameter $\alpha$ speeds-up the convergence of the algorithm. This increased convergence speed does not alter the quality of the proposed restoration.
 }\label{fig:alpha}
 \end{figure}

In \cref{fig:alpha}, we report the effect of $\alpha$ on `butterfly' with Ker1 and noise level $2.55$. More specifically, the evolution curves of the convergence of residual $\left\|{\bf x}^{k+1}-{\bf x}^k\right\|$ at rate $\min_{j \leq k}\left\|{\bf x}^{j+1}-{\bf x}^j\right\|^2$, PSNR and SSIM values with respect to the number of iterations are presented, which showcases the advantage of the proposed extrapolation step. 
{\h Furthermore, the detailed results include iteration number (Iter.), computational time in seconds (Time(s)), recovered PSNR (dB), and SSIM for three tested images (butterfly, leaves, and starfish) in Sect3C with different levels of noise are reported in \cref{app_D}. 
From the presented results, we can see that \cref{alg:buildtree} exhibits improved performance as the extrapolation stepsize $\alpha$ increases, particularly in terms of computational cost. }
In our subsequent experiments, we set $\alpha=0.99*\Lambda(\gamma)$ for a given $\gamma$ to obtain results more efficiently.


\begin{figure}[b!]
\hspace{0.1in}
\begin{minipage}{0.23\linewidth}
		\centering
		\centerline{\includegraphics[width=1.9in]{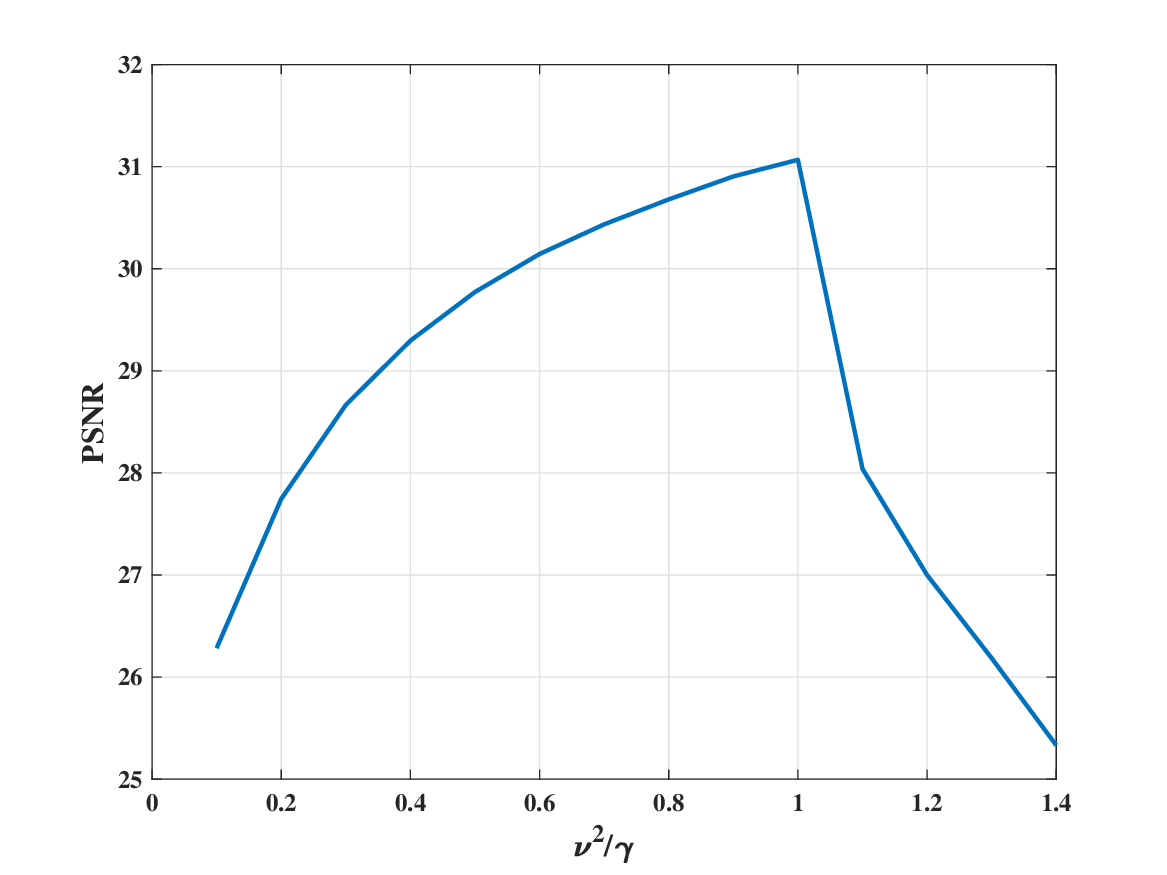}
		}\vspace{-0.03in}
		\centerline{\footnotesize{\h PSNR value along with $\gamma_\nu$}}
	\end{minipage}  \hspace{0.05in}
 \begin{minipage}{0.23\linewidth}
		\centering
		\centerline{\includegraphics[width=1.4in]{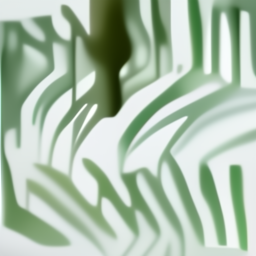}
		}\vspace{-0.03in}
		\centerline{\footnotesize{\y $\gamma_\nu = 0.01$}} 
	\end{minipage}
  \begin{minipage}{0.23\linewidth}
		\centering
		\centerline{\includegraphics[width=1.4in]{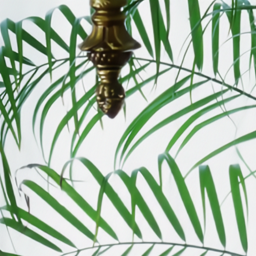}
		}\vspace{-0.03in}
		\centerline{\footnotesize{\y $\gamma_\nu = 1$}} 
	\end{minipage}
  \begin{minipage}{0.23\linewidth}
		\centering
		\centerline{\includegraphics[width=1.4in]{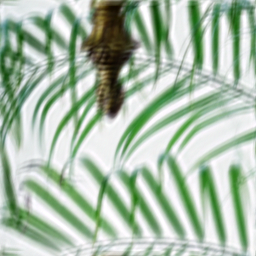}
		}\vspace{-0.03in}
		\centerline{\footnotesize{\y $\gamma_\nu = 1.3$}} 
	\end{minipage}
		\label{fig:pargamma}
  \caption{\y Influence of the parameter $\gamma_\nu=\frac{\nu^2}{\gamma}$ for deblurring with DeTik model. First column: average PSNR along with the $\gamma_\nu$. The other parameters are fixed. Remaining columns: visual results for deblurring `leaves' with various $\gamma_\nu$.}
\end{figure}
\begin{figure}[t!]
\hspace{0.1in}
	\begin{minipage}{0.23\linewidth}
		\centering
		\centerline{\includegraphics[width=1.9in]{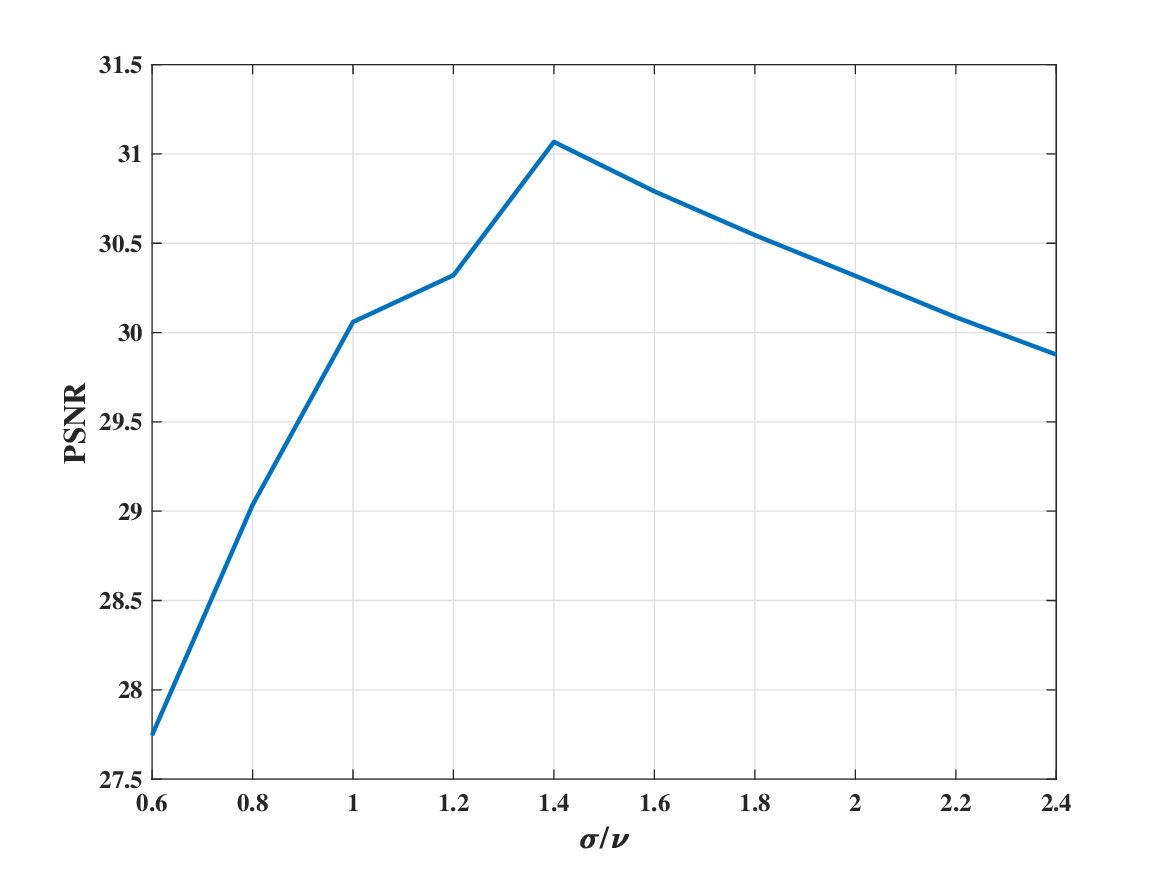}
		}\vspace{-0.03in}
		\centerline{\footnotesize{\h PSNR value along with $\sigma_\nu$}}
	\end{minipage}  \hspace{0.05in}
 \begin{minipage}{0.23\linewidth}
		\centering
		\centerline{\includegraphics[width=1.4in]{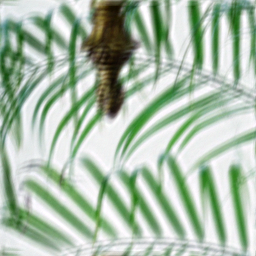}
		}\vspace{-0.03in}
		\centerline{\footnotesize{\y $\sigma_\nu = 0.6$}} 
	\end{minipage}
 \begin{minipage}{0.23\linewidth}
		\centering
		\centerline{\includegraphics[width=1.4in]{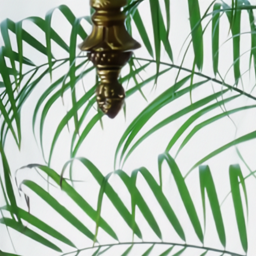}
		}\vspace{-0.03in}
		\centerline{\footnotesize{\y $\sigma_\nu = 1.4$}} 
	\end{minipage}
 \begin{minipage}{0.23\linewidth}
		\centering
		\centerline{\includegraphics[width=1.4in]{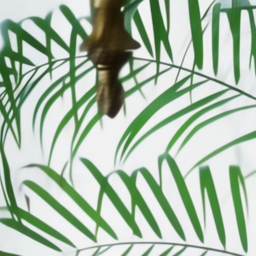}
		}\vspace{-0.03in}
		\centerline{\footnotesize{\y $\sigma_\nu = 10$}} 
	\end{minipage}
 \label{fig:parsigma}
\caption{\y Influence of  the parameter $\sigma_\nu$ for deblurring with DeTik model. First column: average PSNR along with the $\sigma_\nu$. The other parameters are fixed. Remaining columns: visual results for deblurring `leaves' with various $\sigma_\nu$.}
\end{figure}

{\y\subsection{Parameter analysis}
In this subsection, we study the influence of the parameters and initialization of \cref{alg:buildtree} for solving the DeTik model. Recall that DeTik  can be read as
\vspace{-0.3cm}\begin{equation}\nonumber
    \min_{{\bf x}\in\mathbb{R}^n}    \frac{1}{2\nu^2}\Vert A{\bf x} - {\bf b}\Vert^2+\frac{1}{\gamma}\phi_{\sigma}({\bf x})+ \frac{\beta}{2}\Vert {\bf x}\Vert^2,
\end{equation}
where $\nu$  and $\sigma$ are the noise levels of the synth input image and the denoiser $\phi_\sigma$, respectively. We fix model parameter $\beta $ for different noise levels as that in the last subsection, and roughly estimate $\sigma$ proportionally to the input noise level $\nu$ as $\sigma=\sigma_\nu*\nu$, where $\sigma_\nu$ is a positive constant. Consequently, the parameters we will be testing are $\gamma_\nu=\frac{\nu^2}{\gamma}$ and $\sigma_\nu=\sigma/\nu$.

In \cref{fig:pargamma}, we display the average PSNR value of Set3C using 10 tested blur kernels under a noise level of $2.55$, where $\gamma_\nu$ ranges from $0.1$ to $1.4$ with a step size of $0.1$. 
From the results, we can see that the instances with $\gamma_\nu$ values around 1 exhibit superior performance compared to other cases. This observation is further supported by the restored images on the right-hand side, which demonstrate that the quality of that corresponding to $\gamma_\nu=1$ is better than those for $\gamma_\nu=0.01$ and $\gamma_\nu=1.3$.
When $\gamma_\nu=0.01$, the noise is removed, but the blur remains. for a larger value of $\gamma_\nu=1.3$, both the noise and blur remain. Hence, in our experiments, we chose $\gamma_\nu=1$ to address the noise level of $2.55$.  
Next, we test the effect of the parameter $\sigma_\nu$ and present the average PSNR value of Set3C with 10 tested blur kernels under noise level $2.55$ for $\sigma_\nu\in [0.6, 2.4]$ with a step size $0.2$ in \cref{fig:parsigma}. 
The results indicate that almost no deblurring occurs when the value of $\sigma_\nu$ is small. Conversely, as $\sigma_\nu$ increases, excessive smoothing takes place, resulting in the loss of image details. Based on both the curve analysis and the visual outcomes, we select $\sigma_\nu\in[1,2]$.

We further investigate the impact of the initialization of \cref{alg:buildtree}.
In \cref{fig:ini}, we plot the average PSNR value of Set3C obtained from 10 tested blur kernels under a noise level of $2.55$. 
{\h
Due to the nonconvex regularizer, the proposed scheme is sensitive to initial value. Following the setting of \cite{hurault2022proximal}, the initial ${\bf x}^0$ is varied with different noise levels: $\{0.01, 2.55, 5, 7.5, 10\}$.}
Based on the PSNR curve and visual quality in \cref{fig:ini}, we can see that a suitable initial input is crucial for the image deblurring task. When an initial input closely resembles the ground truth image, certain images may not undergo further iterations and terminate prematurely, particularly when the stopping criteria remain unchanged.
On the other hand, if a heavily noisy image serves as the initial input, the iteration process progresses smoothly. However, the resulting image retains the heavy noise due to the low-level denoiser's inability to effectively handle such noise levels. 
In our experiments, we adopt the observation as the initial input to ensure the validity of the obtained results.
}

\begin{figure}[h!]
\hspace{0.1in}
	\begin{minipage}{0.23\linewidth}
		\centering
		\centerline{\includegraphics[width=1.9in]{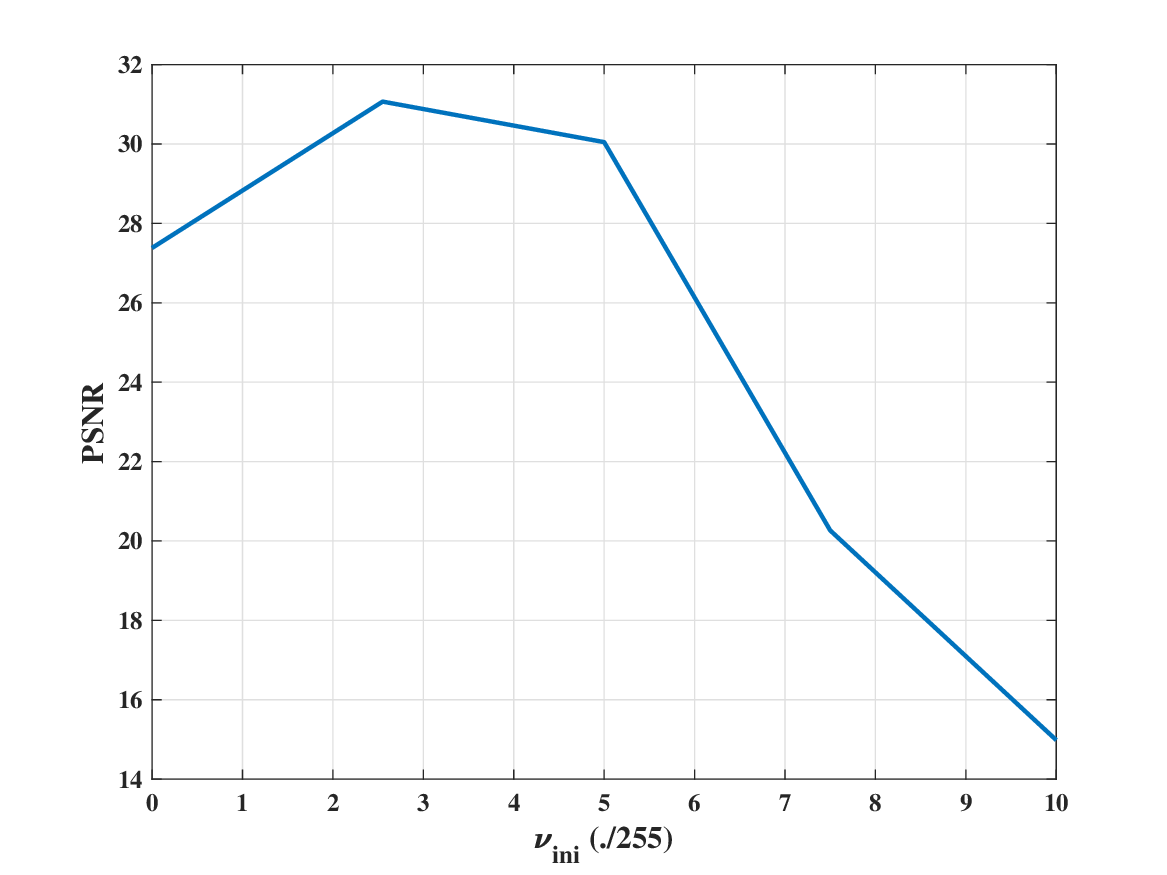}
		}\vspace{-0.03in}
		\centerline{\footnotesize{\h PSNR value along with $\nu_{init}$}}
	\end{minipage}  \hspace{0.05in}
 \begin{minipage}{0.23\linewidth}
		\centering
		\centerline{\includegraphics[width=1.4in]{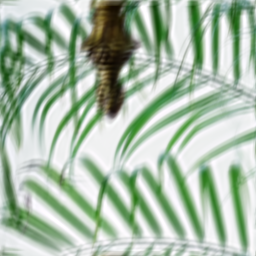}
		}\vspace{-0.03in}
		\centerline{\footnotesize{\y $\nu_{init} = 0.01/255$}} 
	\end{minipage}
 \begin{minipage}{0.23\linewidth}
		\centering
		\centerline{\includegraphics[width=1.4in]{forMR/img_1_33.98dB_Ite172_deblur_1.4.png}
		}\vspace{-0.03in}
		\centerline{\footnotesize{\y $\nu_{init} = 2.55/255$}} 
	\end{minipage}
 \begin{minipage}{0.23\linewidth}
		\centering
		\centerline{\includegraphics[width=1.4in]{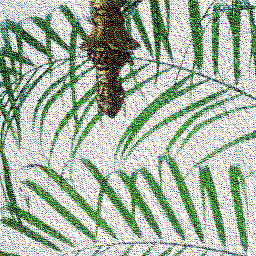}
		}\vspace{-0.03in}
		\centerline{\footnotesize{\y $\nu_{init} = 10/255$}} 
	\end{minipage}
 \label{fig:ini}
\caption{\y Influence of the initialitation ${\bf x}^0$ for deblurring with DeTik model. First column: average PSNR along with different ${\bf x}^0$. The other parameters are fixed. Remaining columns: visual results for deblurring `leaves' with various ${\bf x}^0$.}
\end{figure}

\subsection{Image deblurring and super-resolution}
In this subsection, we are devoted to demonstrating the effectiveness and robustness of the proposed \cref{alg:buildtree} and \cref{alg:3} by solving image deblurring and super-resolution problems. 

As discussed in \cref{subsec_4.1}, \cref{alg:buildtree} can be utilized to solve DeTik model due to the smoothness of $f_1({\bf x})=\frac{1}{2\nu^2}\Vert A{\bf x} - {\bf b}\Vert^2$, where $\nu$ is the noise level;
\cref{alg:3} can be used to solve DeBox model mentioned in \cref{image_deblurring2}. 
We first determine an appropriate $\gamma$ satisfying \cref{ass2} and set $\alpha=0.99*\Lambda(\gamma)$. 
We consider Gaussian noise with 3 noise levels $\nu \in \{2.55, 7.65, 12.75\} / 255$, i.e., $\nu \in \{0.01, 0.03, 0.05\}$, and 2 scale factors $\times 2, \times 3$. 
For the tested noise levels, we set $\sigma = \{1.4 \nu, 0.7\nu, 0.6\nu\}$, $\nu^2/\gamma = \{1, 0.9, 0.6\}$ in \cref{alg:buildtree} for both image deblurring and super-resolution.
For all noise levels, we set $\sigma = \{2\nu, 1\nu, 0.75\nu\}$ and $\nu^2/\gamma = \{5, 1.5, 1\}$ in \cref{alg:3} for both tasks. 
We test the proposed algorithms for different tasks and compare the numerical results recovered by DeTik and DeBox. 

\begin{table}[t!]
\caption{\h Numerical results (PSNR(dB)) of our DeTik and DeBox for image deblurring with Ker1 and 3 noise levels on Dataset Set3C.}\vspace{-0.1in}\label{table: deblur1}\centering
	\resizebox{0.87\hsize}{!}{
		\begin{tabular}{c|ccc|ccc|ccc}
			\hline
                Noise Level&  \multicolumn{3}{c|}{2.55} & \multicolumn{3}{c|}{7.65} &\multicolumn{3}{c}{12.75}   \\
			\hline
Images&Butterfly&Leaves&Starfish&Butterfly&Leaves&Starfish&Butterfly&Leaves&Starfish\\ \hline
Degraded&17.68&16.50&21.56
        &17.48&16.34&21.09
        &17.10&16.06&20.28\\
DeTik   &33.18&34.02&33.14
        &29.91&30.34&29.78
        &27.94&28.06&27.58\\
DeBox  &33.62&33.80&33.53
        &29.75&30.20&29.59
        &27.90&27.96&27.57\\
\hline
	\end{tabular}}
\end{table}

For the image deblurring task, we test four classical datasets, i.e., Set3C, Set14, Kodak24, and Set17, with different blur kernels and noise levels. {\h For the sake of brevity, we present the image deblur results of Ker1 with various noise levels on Set3C in \cref{table: deblur1}, and more results can be found in \cref{app_E}. Our proposed methods demonstrate competitive performance in the task of image deblurring across different noise levels.
}
On the other hand, the visual results of image `powerpoint2002' in Set14 degraded by the blur Ker6 and noise level $12.75$ can be found in \cref{fig:deblur3}. To assess the convergence of the proposed algorithms in the experimental aspect, the evolution and energy curves are plotted and presented alongside the corresponding recovered images.

For the image super-resolution task, we set the scale factor as $\times 2$ and $\times 3$. Meanwhile, the blur and noise (mentioned in the deblurring task) are also considered in the experiments.  
The image super-resolution results on datasets Set5, CBSD68, and Urban100 are reported in \cref{app_E}. {\h More specifically, we report the numerical results on Set5 in \cref{table: sr}.}
Furthermore, the visual results for noise level $7.65$ with blur Ker8 and scale factor $\times 2$ are shown in \cref{fig:sr2}. The evolution and energy curves demonstrate the convergence of the proposed approaches in the experiment, which aligns with our theoretical results.

\begin{table}[h]
\caption{\h Numerical results (PSNR(dB)) of our DeTik and DeBox for image super-resolution with Ker1 and 3 noise levels and scales $\times 2$ and $\times 3$  on Dataset Set5.}\vspace{-0.1in}\label{table: sr}\centering
	\resizebox{1\hsize}{!}{
		\begin{tabular}{cc|ccccc|ccccc|ccccc}
			\hline
                Scales&Noise Level&  \multicolumn{5}{c|}{2.55} & \multicolumn{5}{c|}{7.65} &\multicolumn{5}{c}{12.75}   \\
			\hline
&Images&Baby&Bird&Butterfly&Head&Woman
&Baby&Bird&Butterfly&Head&Woman
&Baby&Bird&Butterfly&Head&Woman\\ \hline
\multirow{3}*{$\times 2$}
&Degraded&28.82&24.73&17.75&25.52&22.73
         &27.61&24.23&17.64&24.93&22.41
         &25.90&23.36&17.43&23.94&21.82\\
&DeTik   &33.93&31.90&27.88&29.17&30.63
         &32.49&29.75&26.42&28.53&29.02
         &31.51&27.87&24.67&27.74&26.81\\
&DeBox  &34.26&31.85&27.19&29.19&30.51
         &32.45&29.53&26.16&28.32&28.89
         &31.42&27.85&24.77&27.63&26.95\\
\hline
\multirow{3}*{$\times 3$}
&Degraded&28.75&24.72&17.75&25.43&22.66
         &27.20&24.05&17.61&24.65&22.23
         &25.15&22.94&17.34&23.40&21.47\\
&DeTik   &32.40&29.17&22.51&28.29&27.44
         &31.51&27.59&23.68&27.77&26.47
         &30.46&26.05&22.35&27.20&25.14\\
&DeBox  &32.53&29.09&22.91&27.67&27.06
         &31.54&27.44&23.37&27.69&26.45
         &30.63&26.15&22.44&27.15&25.20\\
\hline
	\end{tabular}}
\end{table} 

	\begin{figure}[h]
		\centering
\begin{minipage}{0.231\linewidth}
			\centering
			\centerline{\includegraphics[width=1.25in]{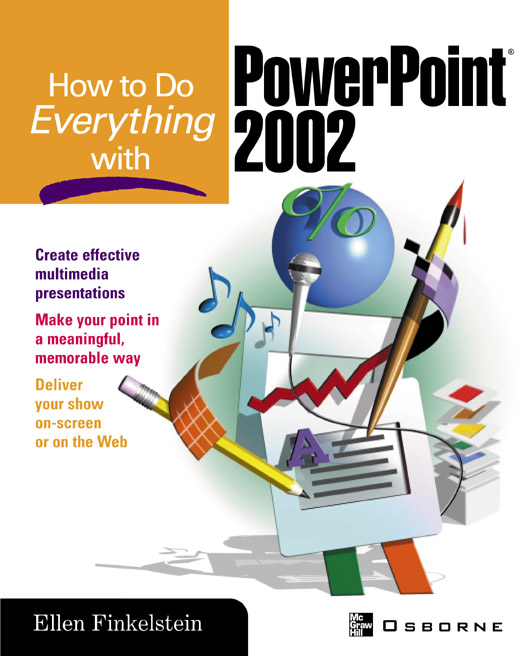}
			}\vspace{-0.03in}
			\centerline{\tiny{(a) Original}}
		\end{minipage}
                \centering
		\begin{minipage}{0.231\linewidth}
			\centering
			\centerline{\includegraphics[width=1.25in]{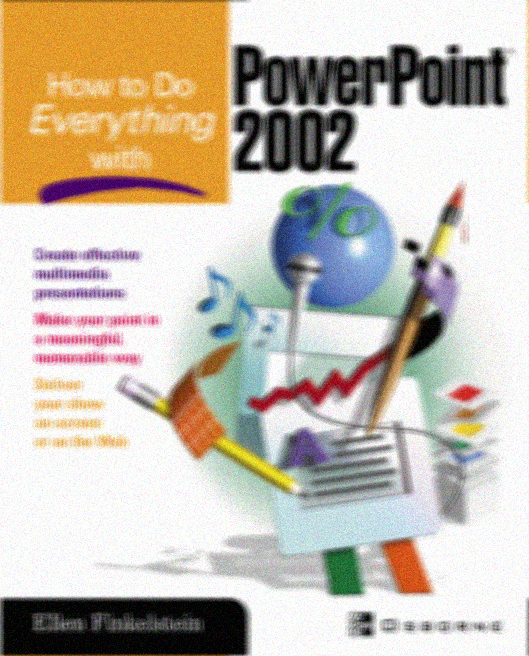}
			}\vspace{-0.03in}
			\centerline{\tiny{(b) Observed (18.14 dB)}}
		\end{minipage}
		\begin{minipage}{0.231\linewidth}
			\centering
			\centerline{\includegraphics[width=1.25in]{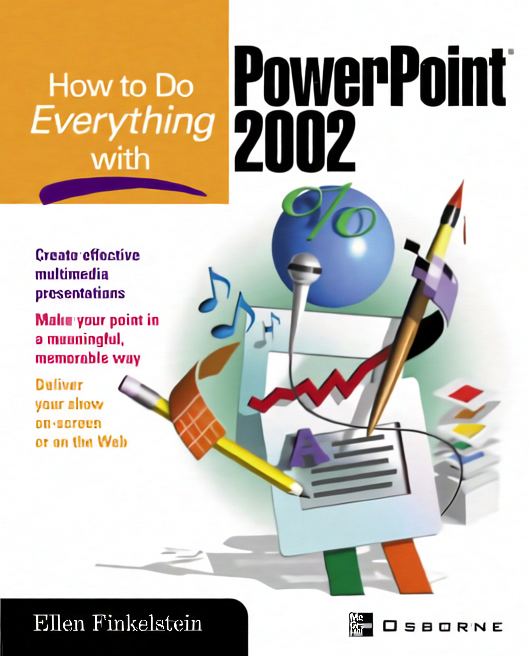}
			}\vspace{-0.03in}
			\centerline{\tiny{(c) DeTik (30.04 dB)}}
		\end{minipage}	
  		\begin{minipage}{0.231\linewidth}
			\centering
			\centerline{\includegraphics[width=1.25in]{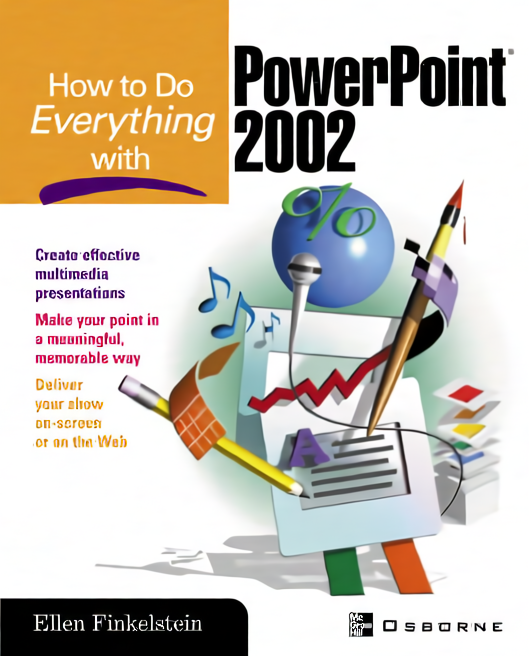}
			}\vspace{-0.03in}
			\centerline{\tiny{\h (d) DeBox (30.48 dB)}}
		\end{minipage}	
      		\begin{minipage}{0.231\linewidth}
			\centering
			\centerline{\includegraphics[width=1.55in]{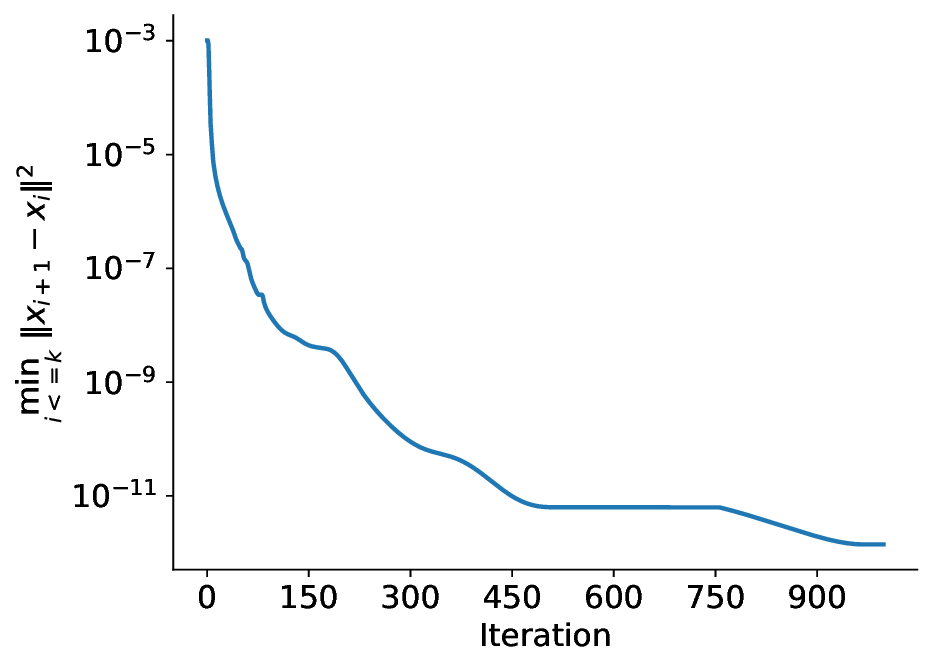}
			}\vspace{-0.03in}
			\centerline{\tiny{\h (e) Evolution of (c) }}
		\end{minipage}	
    		\begin{minipage}{0.231\linewidth}
			\centering
			\centerline{\includegraphics[width=1.55in]{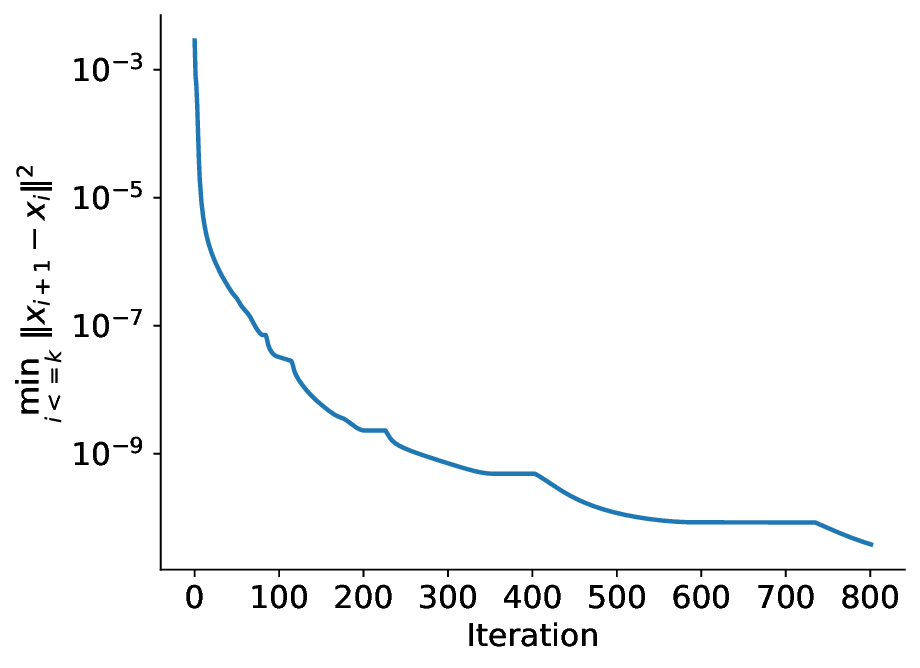}
			}\vspace{-0.03in}
			\centerline{\tiny{(f) \h Evolution of (d) }}
		\end{minipage}	
    		\begin{minipage}{0.231\linewidth}
			\centering
			\centerline{\includegraphics[width=1.55in]{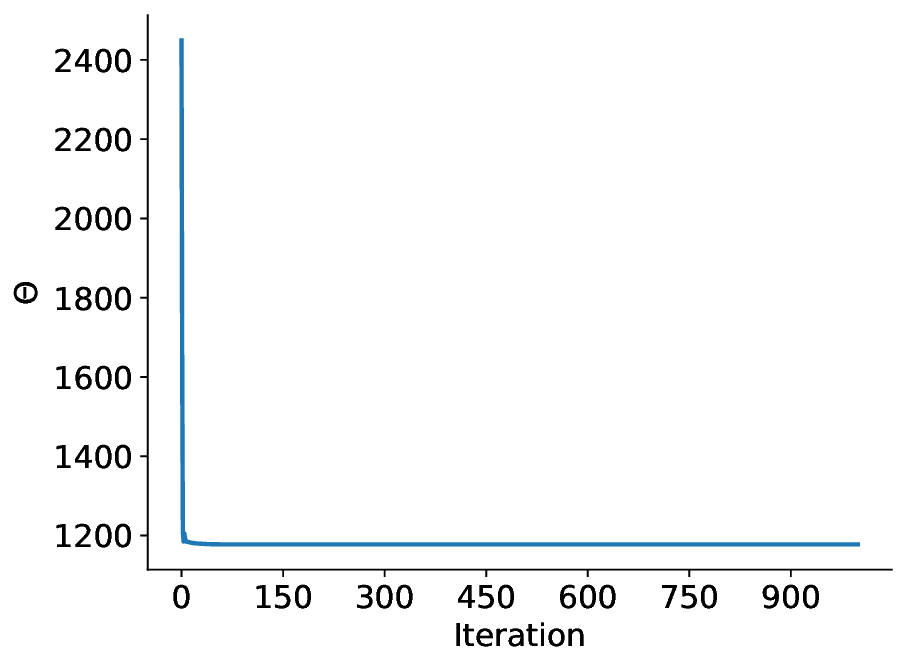}
			}\vspace{-0.03in}
			\centerline{\tiny{\h (g)   $\Theta_{\alpha,\gamma}$ value of (c)}}
		\end{minipage}
  		\begin{minipage}{0.231\linewidth}
			\centering
			\centerline{\includegraphics[width=1.55in]{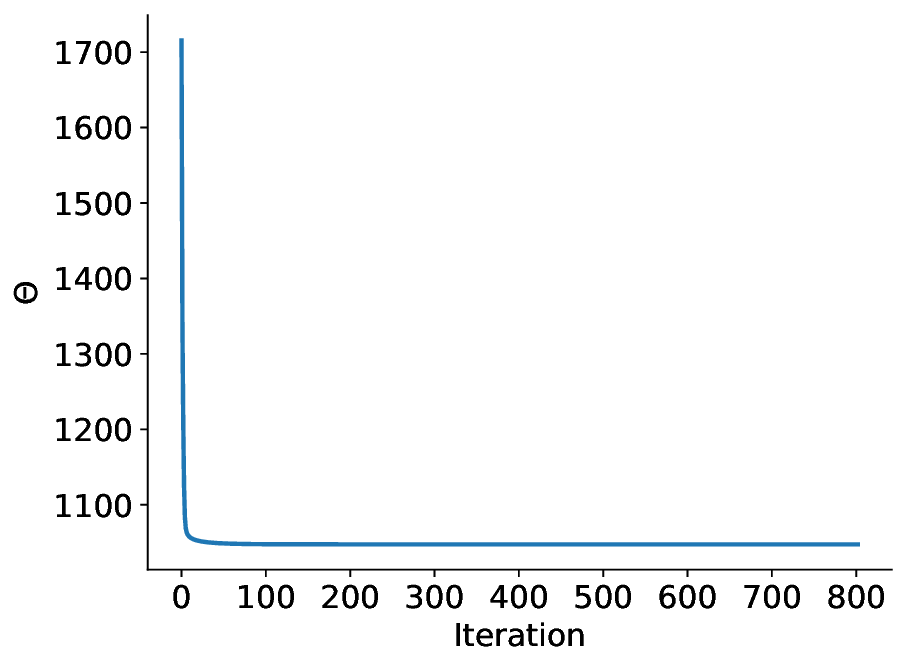}
			}\vspace{-0.03in}
			\centerline{\tiny{\h (h)  $\Theta_{\alpha,\gamma}$ value of (d) }}
		\end{minipage}	

		\caption{\y The deblurring results of DeTik and DeBox on image degradation with Ker6 and noise level $12.75$. The evolution and $\Theta_{\alpha,\gamma}$ value along with the number of iterations. }
		\label{fig:deblur3}
	\end{figure}

\begin{figure}[h]
		\centering
\begin{minipage}{0.231\linewidth}
			\centering
			\centerline{\includegraphics[width=1.25in]{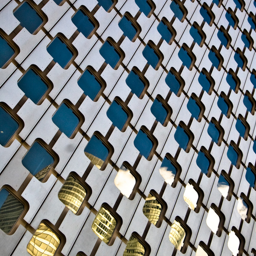}
			}\vspace{-0.03in}
			\centerline{\tiny{(a) Original}}
		\end{minipage}
                \centering
		\begin{minipage}{0.231\linewidth}
			\centering
			\centerline{\includegraphics[width=1.25in]{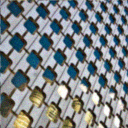}
			}\vspace{-0.03in}
			\centerline{\tiny{(b) Observed (18.03 dB)}}
		\end{minipage}
		\begin{minipage}{0.231\linewidth}
			\centering
			\centerline{\includegraphics[width=1.25in]{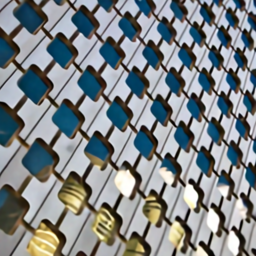}
			}\vspace{-0.03in}
			\centerline{\tiny{(c) DeTik (24.82 dB)}}
		\end{minipage}	
  		\begin{minipage}{0.231\linewidth}
			\centering
			\centerline{\includegraphics[width=1.25in]{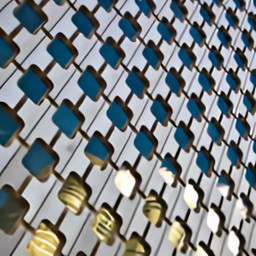}
			}\vspace{-0.03in}
			\centerline{\tiny{\h (d) DeBox (24.79 dB)}}
		\end{minipage}
      		\begin{minipage}{0.231\linewidth}
			\centering
			\centerline{\includegraphics[width=1.55in]{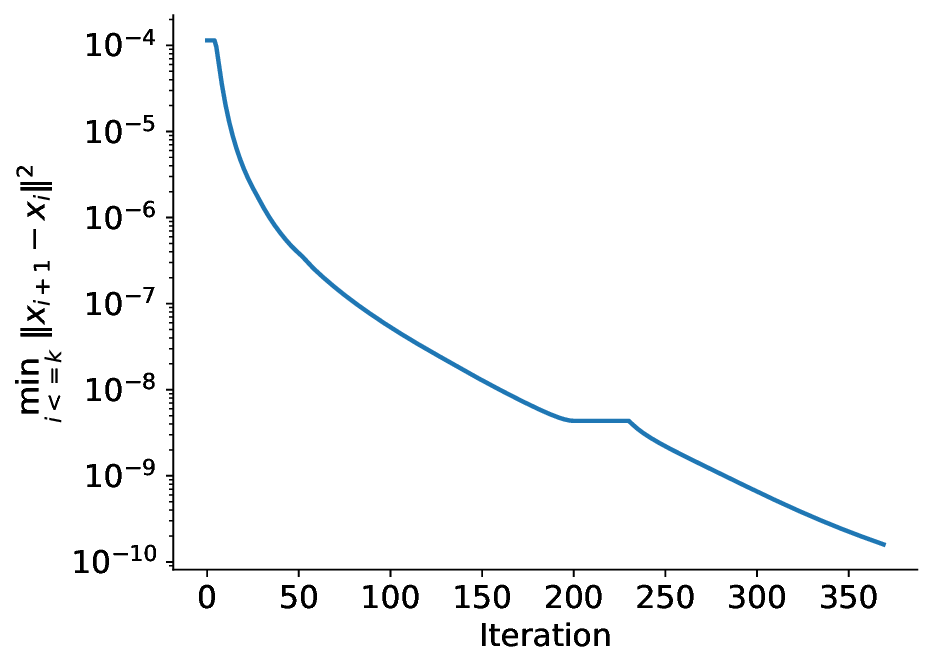}
			}\vspace{-0.03in}
			\centerline{\tiny{\h (e) Evolution of (c) }}
		\end{minipage}	
    		\begin{minipage}{0.231\linewidth}
			\centering
			\centerline{\includegraphics[width=1.55in]{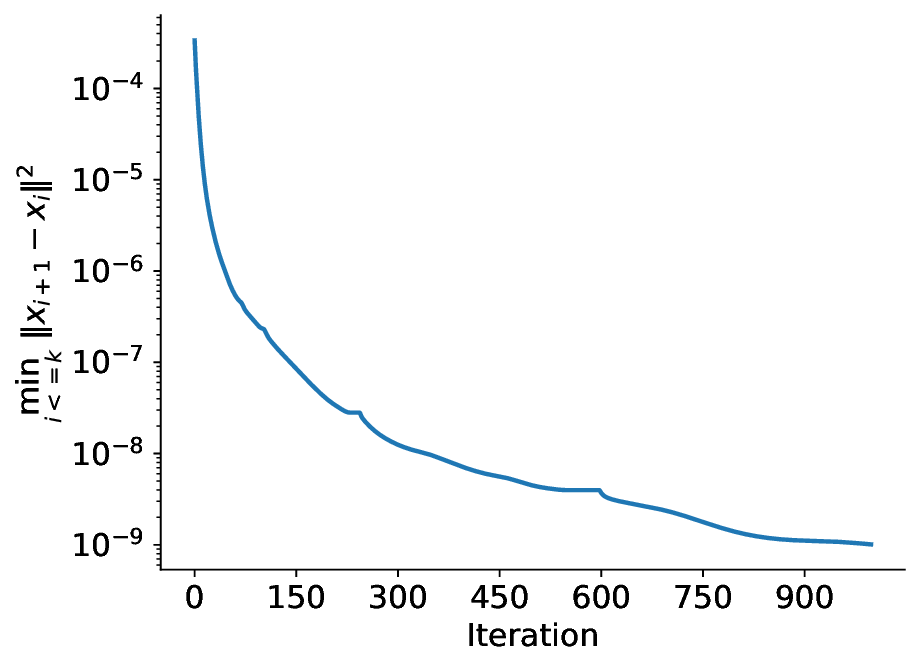}
			}\vspace{-0.03in}
			\centerline{\tiny{\h (f) Evolution of (d) }}
		\end{minipage}	
    		\begin{minipage}{0.231\linewidth}
			\centering
			\centerline{\includegraphics[width=1.55in]{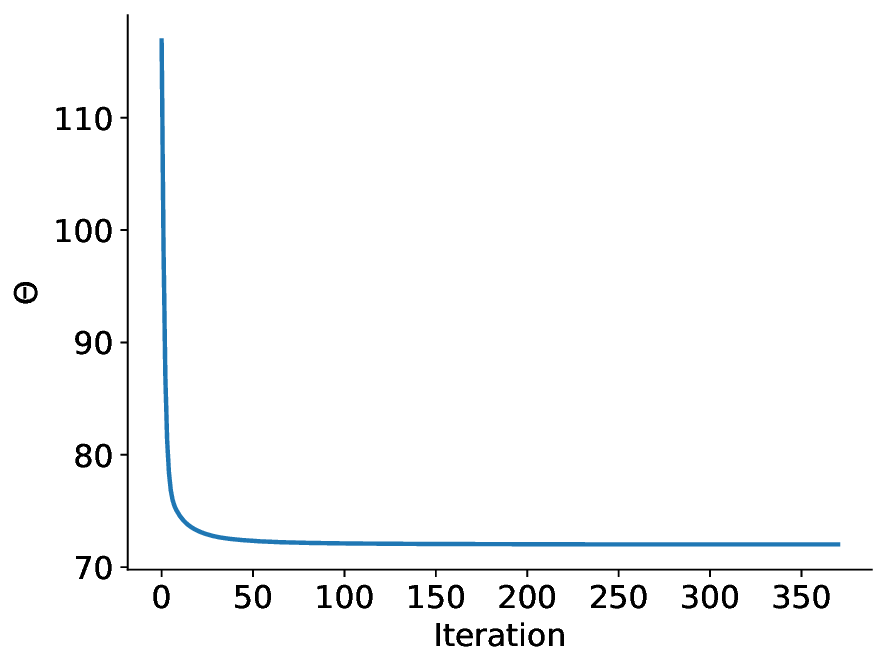}
			}\vspace{-0.03in}
			\centerline{\tiny{\h (g) $\Theta_{\alpha,\gamma}$ value of (c)}}
		\end{minipage}
  		\begin{minipage}{0.231\linewidth}
			\centering
			\centerline{\includegraphics[width=1.55in]{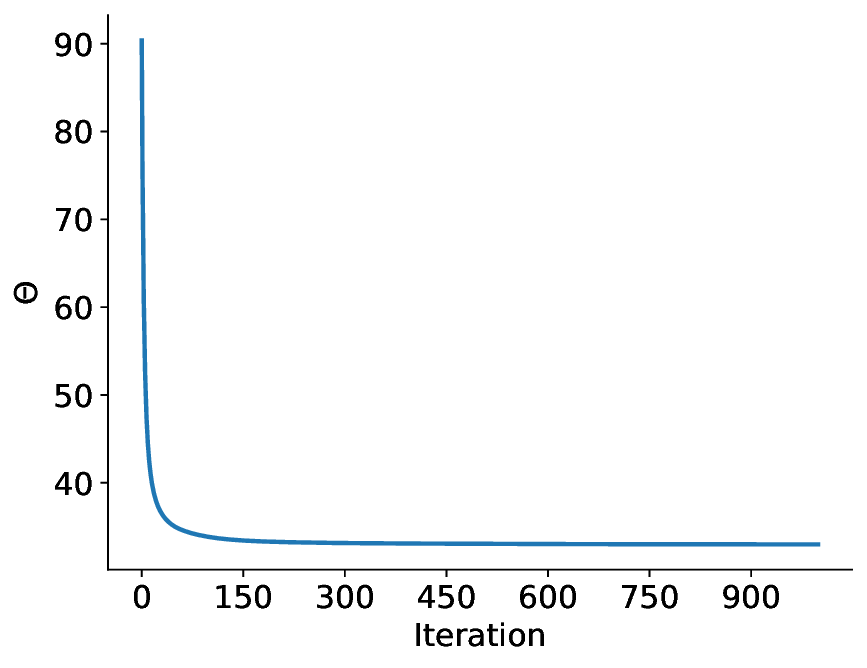}
			}\vspace{-0.03in}
			\centerline{\tiny{\h (h) $\Theta_{\alpha,\gamma}$ value  of (d) }}
		\end{minipage}

		\caption{\y The super-resolution results of DeTik and DeBox on image degradation with scale ($\times 2$) Ker8 and noise level $7.65$. The evolution and $\Theta_{\alpha,\gamma}$ value along with the number of iterations. }
		\label{fig:sr2}
	\end{figure}

\begin{table}[h]
\caption{Comparison on average image deblurring results (PSNR(dB)) of the state-of-the-art methods with our methods on Set3C, Set14, and Set17 datasets.}\vspace{-0.1in}\label{table: comp1}
\centering
	\resizebox{0.92\hsize}{!}{
		\begin{tabular}{c|c|ccccc|cc|cc}
                \hline 
                \multirow{2}*{Datasets}&\multirow{2}*{Noise Level}&\multirow{2}*{Degraded}&\multirow{2}*{DWDN}&\multirow{2}*
                {DP-IRCNN}&\multirow{2}*
                {DPIR}&\multirow{2}*
                {DREDDUN}&  Alg. \ref{alg3.1} & Alg. \ref{alg:buildtree}&  ADMM & Alg. \ref{alg:3}\\
                \cline{8-11}&&&
                &
                &
                
                &
                &TVTik&DeTik&TVBox&DeBox\\
			\hline 
			\multirow{3}*{Set3C}&{2.55}
                &19.93&30.92&30.92&{32.55}&30.71&29.46&30.98&28.84&{31.24}\\
                &{7.65}&19.52&28.62&27.60&28.60&28.62&25.10&{28.78}&25.18&28.62\\
                &{12.75}&18.84&26.92&25.93&26.80&26.97&23.34&{27.08}&23.39&{27.08}\\
                \hline
			\multirow{3}*{Set14}&{2.55}
                &22.82&31.08&30.64&{31.76}&{31.16}&28.47&30.17&27.68&30.08\\ 
                &{7.65}&22.10&28.41&28.13&{28.79}&{28.57}&26.68&28.47&26.06&28.33\\
                &{12.75}&21.03&27.20&27.03&{27.32}&27.38&25.30&27.30&25.10&{27.32}\\
                \hline 
			\multirow{3}*{Set17}&{2.55}
                &25.28&33.14&32.35&{33.98}&{33.41}&30.56&32.60&30.67&32.43\\
                &{7.65}
                &24.07&30.39&29.83&{30.64}&30.62&27.73&{30.64}&27.85&30.55\\
                &{12.75}
                &22.55&28.93&28.74&29.40&29.24&26.33&{29.25}&26.54&{29.29}\\
                \hline
\end{tabular}} 
\end{table}

\subsection{Comparison with state-of-the-art methods}
In the preceding subsections, we have substantiated the validity of the proposed algorithm in handling both smooth and non-smooth objective functions.  
However, these evaluations alone do not entirely showcase the advantage of our method. Hence, in this subsection, we conduct a comparative analysis with state-of-the-art methods to provide further evidence of the exceptional effectiveness of our approach.

	\begin{figure}[b!]
 \subfigure[\footnotesize{Original}]{
	\zoomincludgraphic{0.218\textwidth}{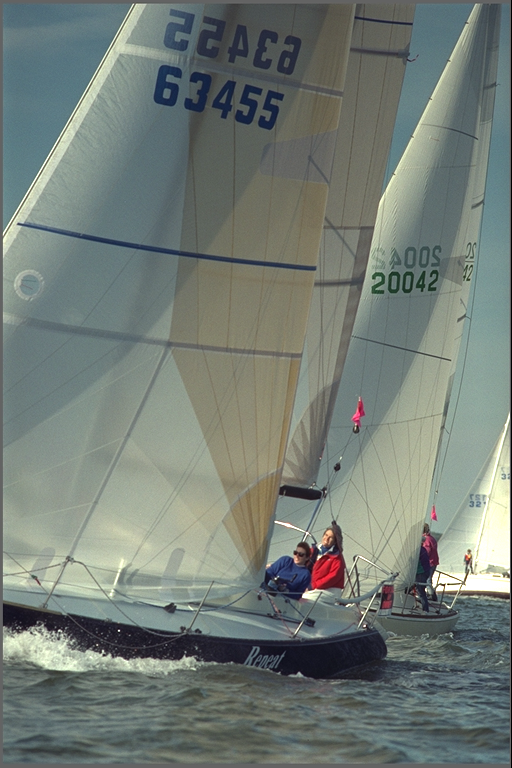}{0.7}{0.6}{0.9}{0.7}{3}{help_grid_off}{bottom_right}{line_connection_off}{2}{blue}{1}{red} 
	} \hspace{-0.28in}
    \subfigure[\footnotesize{Observed (23.02 dB)}]{
	\zoomincludgraphic{0.218\textwidth}{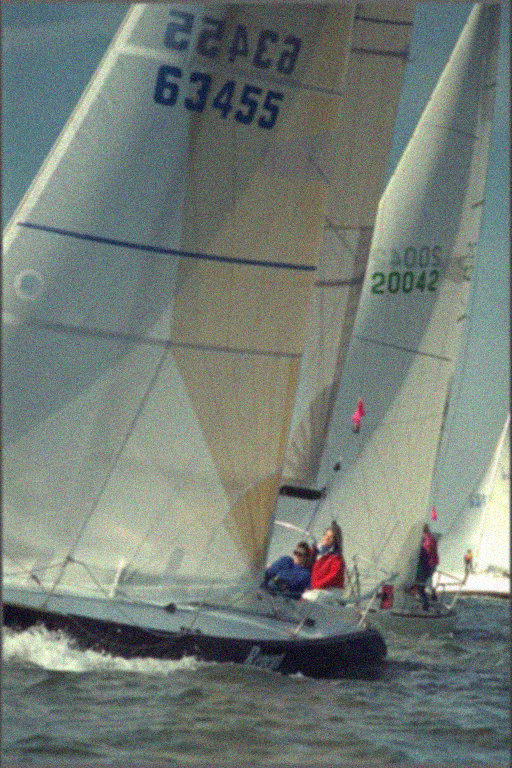}{0.7}{0.6}{0.9}{0.7}{3}{help_grid_off}{bottom_right}{line_connection_off}{2}{blue}{1}{red} }\hspace{-0.23in}
 \subfigure[\footnotesize{DWDN (31.05 dB)}]{	\zoomincludgraphic{0.218\textwidth}{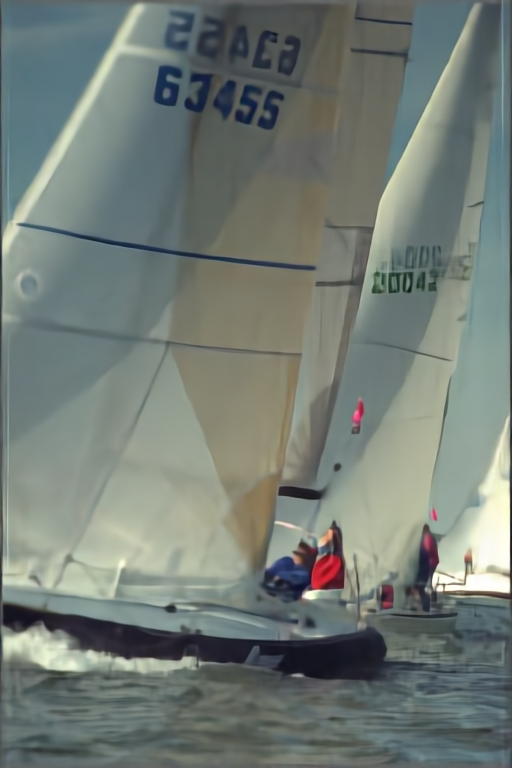}{0.7}{0.6}{0.9}{0.7}{3}{help_grid_off}{bottom_right}{line_connection_off}{2}{blue}{1}{red} 	}
\hspace{-0.28in}
      \subfigure[\footnotesize{DP-IRCNN (31.22 dB)}]{
	\zoomincludgraphic{0.218\textwidth}{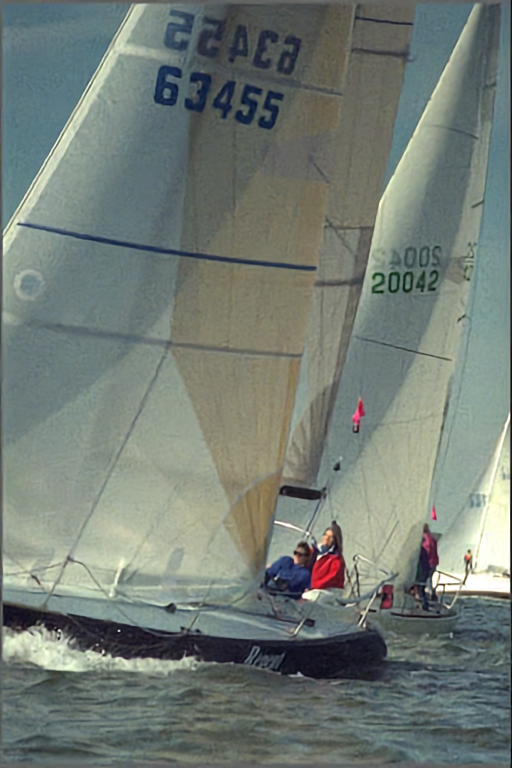}{0.7}{0.6}{0.9}{0.7}{3}{help_grid_off}{bottom_right}{line_connection_off}{2}{blue}{1}{red} }
 
    \subfigure[\footnotesize{DPIR (31.97 dB)}]{
	\zoomincludgraphic{0.218\textwidth}{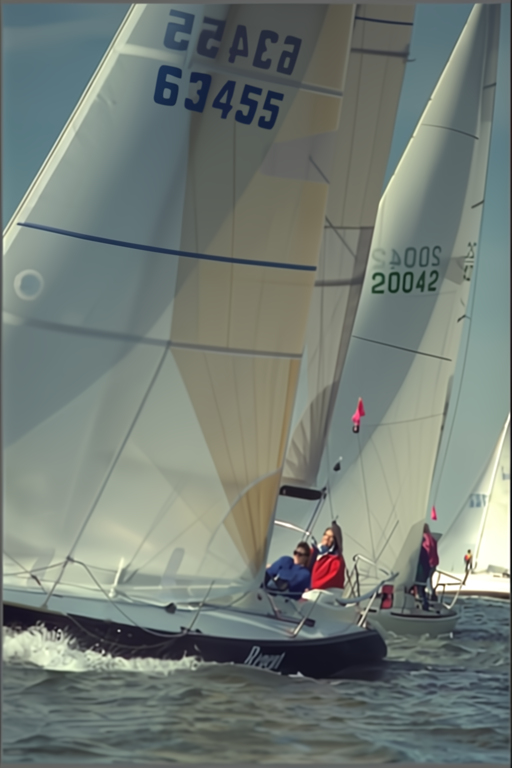}{0.7}{0.6}{0.9}{0.7}{3}{help_grid_off}{bottom_right}{line_connection_off}{2}{blue}{1}{red} 
	}\hspace{-0.23in}
    \subfigure[\footnotesize{DREDDUN (31.19 dB)}]{
	\zoomincludgraphic{0.218\textwidth}{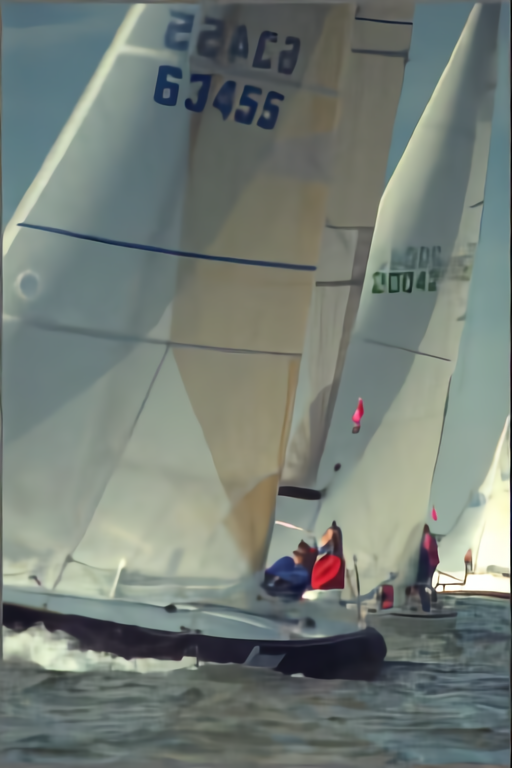}{0.7}{0.6}{0.9}{0.7}{3}{help_grid_off}{bottom_right}{line_connection_off}{2}{blue}{1}{red} 
	}\hspace{-0.23in}
    \subfigure[\footnotesize{DeTik (32.05 dB)}]{
	\zoomincludgraphic{0.218\textwidth}{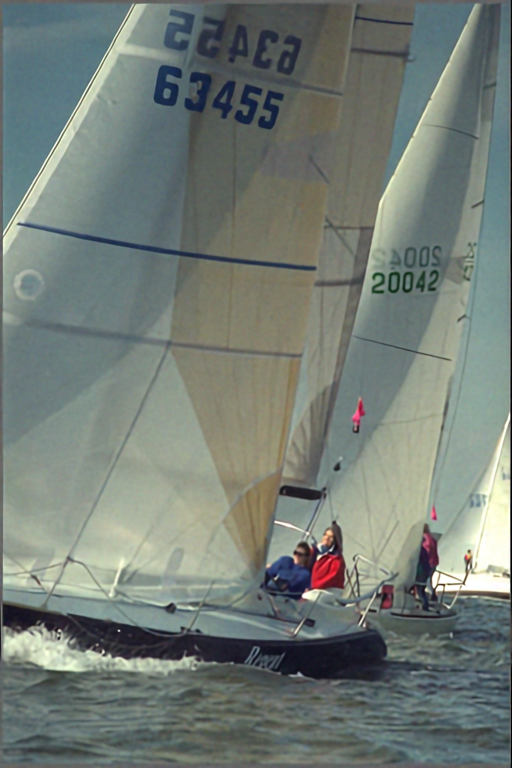}{0.7}{0.6}{0.9}{0.7}{3}{help_grid_off}{bottom_right}{line_connection_off}{2}{blue}{1}{red} 
	}\hspace{-0.23in}
  \subfigure[\footnotesize{DeBox (31.61 dB)}]{
	\zoomincludgraphic{0.218\textwidth}{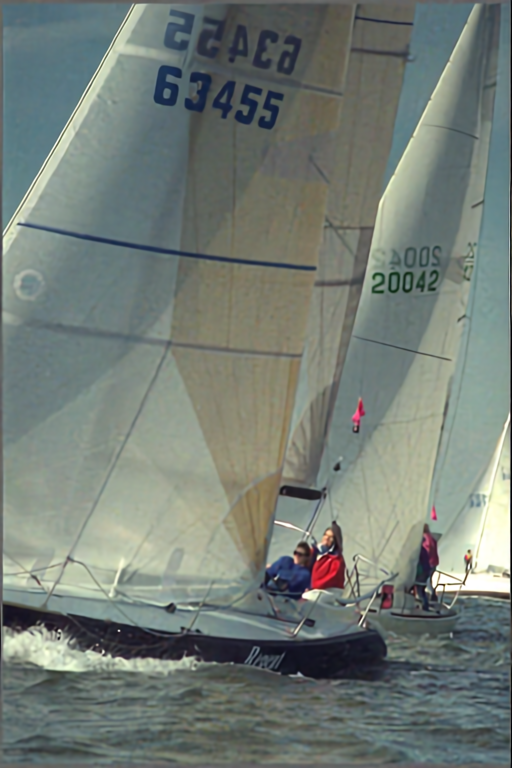}{0.7}{0.6}{0.9}{0.7}{3}{help_grid_off}{bottom_right}{line_connection_off}{2}{blue}{1}{red} 
	}
	\vspace{-0.3cm}	\caption{\y Image deblurring results with Ker9 and noise level 12.75. (g) is the result of proposed \cref{alg:buildtree} for DeTik; (h) is the result of the proposed \cref{alg:3} for DeBox. }
		\label{fig:compblur1}
	\end{figure}

\subsubsection{Comparisons with advanced deblurring models}
Following the implementation of the plug-and-play strategy, our proposed method integrates a denoiser into the objective function. 
Consequently, several methods that employ the same strategy are compared. 
While these methods yield competitive results, it is important to note that our proposed method holds a distinct advantage in terms of theoretical analysis. Specifically, our method guarantees convergence, whereas not all of the compared methods provide such a guarantee. In this paper, some plug-and-play methods and unrolling models 
DWDN \cite{dong2020deep}, 
DPIR \cite{zhang2021plug} with IRCNN \cite{zhang2017learning} (DP-IRCNN), DPIR \cite{zhang2021plug} with DRUNet (DPIR),
and DREDDUN \cite{9662672} are compared. All the compared codes were obtained either from the official published versions or were graciously provided by the authors themselves.

To provide more comprehensive results of the image deblurring, we compiled the average results for 10 blur kernels and 3 noise levels in
\cref{table: comp1}. We list the results of the proposed two algorithms with two cases, respectively. From the numerical results, it becomes evident that our DeTik and DeBox yield competitive performance compared to deep learning-based plug-and-play and unrolling methods. Nevertheless, it is important to note that the traditional TVTik and TVBox cases may exhibit less satisfactory results, which is understandable considering that deep learning-based models have the advantage of leveraging more prior information compared to traditional priors.
{\y Furthermore, the visual results are depicted in \cref{fig:compblur1} for a more comprehensive illustration. 
Note that we only present {\h our PnP-based results (DeTik and DeBox)} for visual comparison. We can see that although the PnP-based methods usually cause over-smoothing, the proposed algorithms (DeTik and DeBox) exhibit superior performance in detail restoration compared to the other methods.}

\begin{table}[t!]
\caption{Comparison on average  image super-resolution results (PSNR(dB)) of the state-of-the-art methods with our methods on  Set5 and Urban100 datasets.}\vspace{-0.1in}\label{table: comp2}
\centering
	\resizebox{0.92\hsize}{!}{
		\begin{tabular}{c|c|c|ccccc|cc|cc}
                \hline 
                \multirow{2}*{Scales}&\multirow{2}*{Datasets}&\multirow{2}*{Noise Level}&\multirow{2}*{Bicubic}&\multirow{2}*{USRNet}&\multirow{2}*{DP-IRCNN}&\multirow{2}*{DPIR}&\multirow{2}*{DREDDUN}& Alg. \ref{alg3.1} & Alg. \ref{alg:buildtree}&  ADMM & Alg. \ref{alg:3}\\
                \cline{9-12}&&&&&&&&TVTik&DeTik&TVBox&DeBox\\
			\hline 
			\multirow{6}*{$\times 2$}&\multirow{3}*{Set5}&{2.55}
                &24.21&30.75&29.33&31.07&30.49&28.16&30.29&27.70&{30.51}\\
                &&{7.65}&23.48&29.38&27.76&28.81&28.46&26.59&{29.16}&26.04&{29.12}\\
                &&{12.75}&22.45&27.98&26.96&27.60&27.34&23.26&27.91&24.40&{27.99}\\
                \cline{2-12}
			&\multirow{3}*{Urban100}&{2.55}
                &19.15&{25.67}&25.34&{25.40}&{25.43}&21.23&24.10&21.51&23.86\\
                &&{7.65}&18.93&24.49&23.69&{24.52}&23.81&19.86&{24.34}&20.80&23.18\\
                &&{12.75}&18.53&22.92&22.68&23.18&{22.89}&19.24&{23.29}&19.81&{22.34}\\
                \hline
                \multirow{6}*{$\times 3$}&\multirow{3}*{Set5}&{2.55}
                &23.29&30.11&27.99&28.95&{28.55}&25.79&28.20&26.09&28.42\\
                &&{7.65}&22.71&28.19&26.52&27.22&27.11&25.19&{27.65}&25.72&27.64\\
                &&{12.75}&21.84&27.04&25.68&26.18&26.14&24.57&{26.61}&25.03&26.67\\
                \cline{2-12}
			&\multirow{3}*{Urban100}&{2.55}
                &18.54&24.03&22.80&23.62&23.12&21.52&23.14&20.45&21.72\\
                &&{7.65}
                &18.35&22.12&21.90&22.36&21.67&20.05&21.65&19.92&21.45\\
                &&{12.75}&18.00&20.93&20.37&20.91&20.91&19.16&20.70&19.40&20.93\\
                \hline
\end{tabular}}
\end{table} 

\subsubsection{Comparisons with advanced super-resolution models}
For image super-resolution task, USRNet \cite{zhang2020deep}, IRCNN \cite{zhang2017learning} (DP-IRCNN), DPIR \cite{zhang2021plug} with DRUNet (DPIR),
and DREDDUN \cite{9662672} are compared. All the compared codes used in our study were obtained either from the official published versions or were graciously provided by the authors themselves.
Note that when addressing the image super-resolution task with sample scales $\times 2$ and $\times 3$, we simulated the degraded images by incorporating blur and noise during the sampling process. Specifically, we added 10 blur kernels and introduced the 3 Gaussian noises mentioned earlier. 

The average image super-resolution results of the proposed algorithms with other advanced super-resolution models are listed in \cref{table: comp2}. We can see that our methods achieve competitive results under different scaling factors. While it is true that some compared methods outperform the proposed algorithm in some degradation cases, it is important to note that most of these methods lack convergence guarantees. Furthermore, we conducted a visual comparison of the renderings in \cref{fig:compsr1}, in which the proposed methods exhibit distinct advantages. Our proposed method excels in detail recovery when compared to other methods. Hence, based on both theoretical guarantees and experimental evidence, the algorithms we proposed exhibit distinct advantages when applied to image super-resolution tasks.
	\begin{figure}[t!]
		\centering
 \subfigure[\footnotesize{Original}]{
	\zoomincludgraphic{0.218\textwidth}{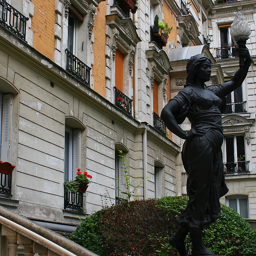}{0.01}{0.6}{0.3}{0.2}{1.5}{help_grid_off}{up_right}{line_connection_off}{2}{blue}{1}{red} 
	} \hspace{-0.23in}
    \subfigure[\footnotesize{Observed (18.56 dB)}]{
	\zoomincludgraphic{0.218\textwidth}{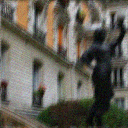}{0.01}{0.6}{0.3}{0.2}{1.5}{help_grid_off}{up_right}{line_connection_off}{2}{blue}{1}{red} 
	}\hspace{-0.23in}
      \subfigure[\footnotesize{USRNet (22.28 dB)}]{
	\zoomincludgraphic{0.218\textwidth}{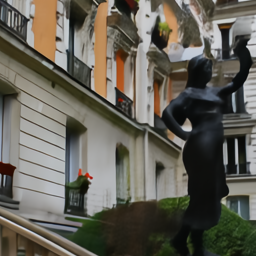}{0.01}{0.6}{0.3}{0.2}{1.5}{help_grid_off}{up_right}{line_connection_off}{2}{blue}{1}{red} 
	}\hspace{-0.23in}
    \subfigure[\footnotesize{DP-IRCNN (22.10 dB)}]{
	\zoomincludgraphic{0.218\textwidth}{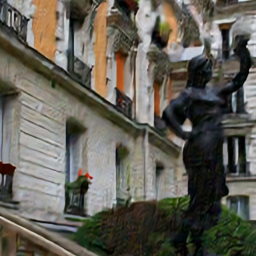}{0.01}{0.6}{0.3}{0.2}{1.5}{help_grid_off}{up_right}{line_connection_off}{2}{blue}{1}{red} 
	}\vspace{-0.1in}
 
    \subfigure[\footnotesize{DPIR (22.51 dB)}]{
	\zoomincludgraphic{0.218\textwidth}{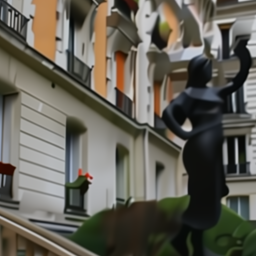}{0.01}{0.6}{0.3}{0.2}{1.5}{help_grid_off}{up_right}{line_connection_off}{2}{blue}{1}{red} 
	}\hspace{-0.23in}
    \subfigure[\footnotesize{DREDDUN (21.53 dB)}]{
	\zoomincludgraphic{0.218\textwidth}{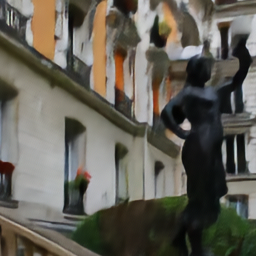}{0.01}{0.6}{0.3}{0.2}{1.5}{help_grid_off}{up_right}{line_connection_off}{2}{blue}{1}{red} 
	}\hspace{-0.23in}
    \subfigure[\footnotesize{DeTik (22.84 dB)}]{
	\zoomincludgraphic{0.218\textwidth}{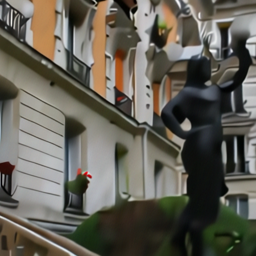}{0.01}{0.6}{0.3}{0.2}{1.5}{help_grid_off}{up_right}{line_connection_off}{2}{blue}{1}{red} 
	}\hspace{-0.23in}
  \subfigure[\footnotesize{DeBox (22.62 dB)}]{
	\zoomincludgraphic{0.218\textwidth}{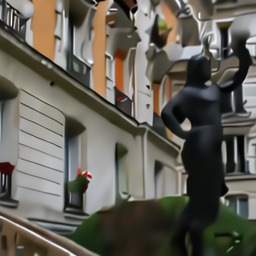}{0.01}{0.6}{0.3}{0.2}{1.5}{help_grid_off}{up_right}{line_connection_off}{2}{blue}{1}{red} 
	}\vspace{-0.1in}
	 	\caption{Image super-resolution results with scale $\times 2$, Ker1 and noise level 7.65. (g) is the result of proposed \cref{alg:buildtree} for DeTik; (h) is the results of the proposed \cref{alg:3} for DeBox. }
		\label{fig:compsr1}\vspace{-0.2in}
	\end{figure}
 
\section{Conclusions} \label{sec:conclusions}
This paper studied an extrapolated three-operator splitting method for solving a class of structural nonconvex optimization problems that minimize the sum of three functions. Our method extends the Davis-Yin splitting approach, which encompasses the widely-used forward-backward and Douglas-Rachford splitting methods, and introduces extrapolation techniques to handle nonconvex optimization problems. The convergence to a stationary point has been established by leveraging the Kurdyka-{\L}ojasiewicz property.
To further enhance the applicability, we applied the proposed splitting method within the Plug-and-Play (PnP) approach, incorporating a learned denoiser. The extrapolated PnP-based splitting methods replace the regularization step with a denoiser based on gradient step-based techniques, and we have provided theoretical guarantees for their convergence. This integration allows us to leverage the power of learning-based models.
Furthermore, we have conducted extensive numerical experiments to evaluate the performance of our proposed methods on image deblurring and super-resolution problems. The results of these experiments have demonstrated the advantages and efficiency of the extrapolation strategy employed in our algorithmic framework. Importantly, our experiments have highlighted the superiority of the learning-based model with the PnP denoiser in terms of image quality.

In future research, we will consider the variants of the proposed method, such as incorporating line search, inexact solving techniques, and dynamically adapting parameter choices, to extend the applicability of our framework to a broader range of practical problems. Further theoretical investigations are warranted to establish convergence guarantees for splitting methods combined with other efficient PnP denoisers, such as the Bregman-based denoiser proposed in \cite{hurault2023convergent2} for various Poisson inverse problems. {\h In addition, investigating the potential applications of the proposed methods in the field of medical image processing is a crucial aspect of our future work.}

\appendix
{
\section{Experimental results on effect of extrapolation}\label{app_D}
We report the average image deblurring results under 10 different blur kernels and 3 noise levels in \cref{table: alpha1}, which include iteration number (Iter.), computational time in seconds (Time(s)), recovered PSNR (dB), and SSIM for three tested images (butterfly, leaves, and starfish) in Sect3C with different levels of noise. From the presented results, we can see that \cref{alg:buildtree} exhibits improved performance as the extrapolation stepsize $\alpha$ increases, particularly in terms of computational cost. Increasing the extrapolation parameter $\alpha$ speeds-up the convergence of the algorithm. This increased convergence speed does not alter the quality of the proposed restoration. 

\begin{table}[h]
\caption{\y Parameter analysis of $\alpha$  in \cref{alg:buildtree} for image deblurring by DeTik model on the dataset Set3C with different noise levels.}\vspace{-0.1in}\label{table: alpha1}
\centering
	\resizebox{0.9\hsize}{!}{
		\begin{tabular}{c|c|ccccccccccc}
                \hline 
               \multirow{2}*{$\alpha$}&  {Image} &\multicolumn{3}{c}{butterfly}&&\multicolumn{3}{c}{leaves}&&\multicolumn{3}{c}{starfish}\\
			\cline{2-5}\cline{7-9}\cline{11-13} 
                &  {Noise Level} & 2.55 & 7.65 &12.75 &&2.55 & 7.65 &12.75&&2.55 & 7.65 &12.75\\
                \hline
                 \multirow{4}*{0}&Iter.  &681 &1001  &512  & &436&596 &428   &&388 &396  &513  \\
                                  &Time(s) &25.97&36.79 &18.87 &&15.54&20.92 &15.61 &&13.56&13.75 &18.30    \\
                &PSNR &33.18 &29.91 &27.94  && 33.97 & 30.33  &28.05  && 33.11 &29.78  &27.57  \\
                  & SSIM & 0.9760 & 0.9569 &0.9367 &&  0.9890 &  0.9760 & 0.9617 && 0.9551& 0.9233 & 0.8866 \\
                 \hline                 
                 \multirow{4}*{$  0.25*\Lambda(\gamma)$}&Iter.& 617&972&457&&383&532 &381  &&340 &351  &417  \\
                                                 &Time(s)&22.54  &37.87 &17.03&&13.13&19.00 &13.72  &&11.42&12.40  &14.74  \\
                  &PSNR  &33.18 &29.91 &27.94 &&33.97  &30.33  &28.05 &&33.11  &29.78   &27.57  \\
                   & SSIM &0.9760&0.9569&0.9367&&0.9890 &0.9760 &0.9617&&0.9551 &0.9233  &0.8866  \\
                  \hline                  
                 \multirow{4}*{$ 0.50*\Lambda(\gamma)$}&Iter.& 550&901 &403 &&332&467&226   &&297 &308  &396   \\
                  &Time(s)&20.07&36.18 &15.07    &&11.79 &16.32 &12.07    &&10.11 &10.51 &13.88  \\
                   &PSNR &33.18 &29.91 &27.94 &&33.97 &30.33 &28.05  &&33.11  &29.78 &27.57  \\
                    &SSIM&0.9760 &0.9569 &0.9367&&0.9890&0.9760&0.9617 &&0.9551 &0.9233&0.8866 \\
                    \hline                    
                 \multirow{4}*{$  0.75*\Lambda(\gamma)$}&Iter.&463 &873 &348 &&279&405&317     &&251 &265  &507  \\
                  &Time(s)&16.77 &37.86&12.46    &&10.06&14.28 &10.99 &&8.71&9.35 &17.20   \\
                   &PSNR &33.18  &29.91 &27.94   &&33.9 &30.33  &28.05  &&33.11 &29.78  &27.58  \\
                     &SSIM &0.9760 &0.9569 &0.9367   &&0.9890&0.9760 &0.9617 &&0.9551 &0.9233 &0.8866 \\
                    \hline                    
                 \multirow{4}*{$  0.99*\Lambda(\gamma)$}&Iter.&375&861 &491   &&225&344&26 &&204&223 &276 \\
                  &Time(s)&13.63&32.35 &18.41     &&7.66&12.10&9.21  &&6.69 &7.69  &9.31   \\
                   &PSNR  &33.18&29.91&27.95   &&33.97&30.33 &28.05  &&33.14&29.78&27.57\\
                    &SSIM  &0.9760&0.9569&0.9367   &&0.9890&0.9760 &0.9617  &&0.9551&0.9233&0.8866\\
            \hline
\end{tabular}}\vspace{-0.05in}
\end{table}

\begin{figure}[t!]
\centering
\begin{minipage}{0.38\linewidth}
		\centering
		\centerline{\includegraphics[height=1.51in,width=2.5in]{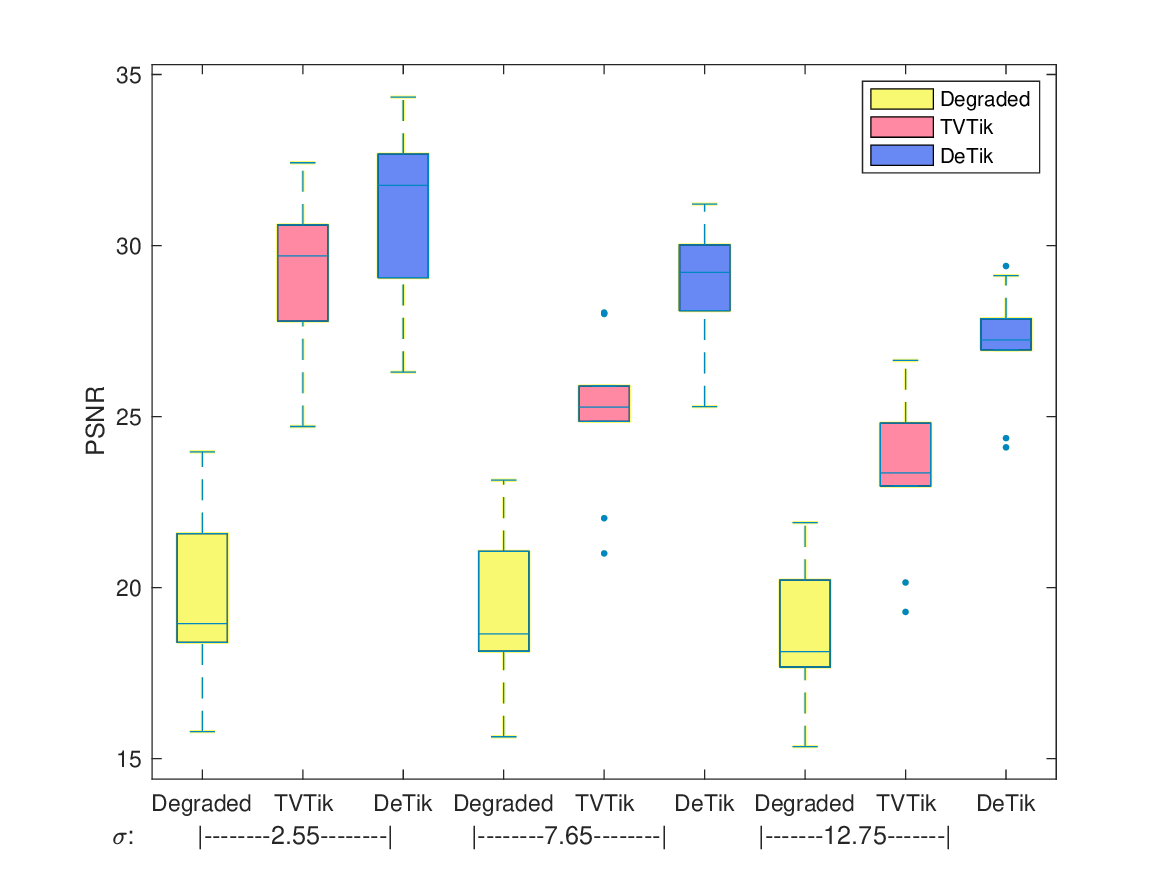}
		}\vspace{-0.03in}
		\centerline{\footnotesize{\y Set3C}}
	\end{minipage} 
 \begin{minipage}{0.38\linewidth}
		\centering
		\centerline{\includegraphics[height=1.51in,width=2.5in]{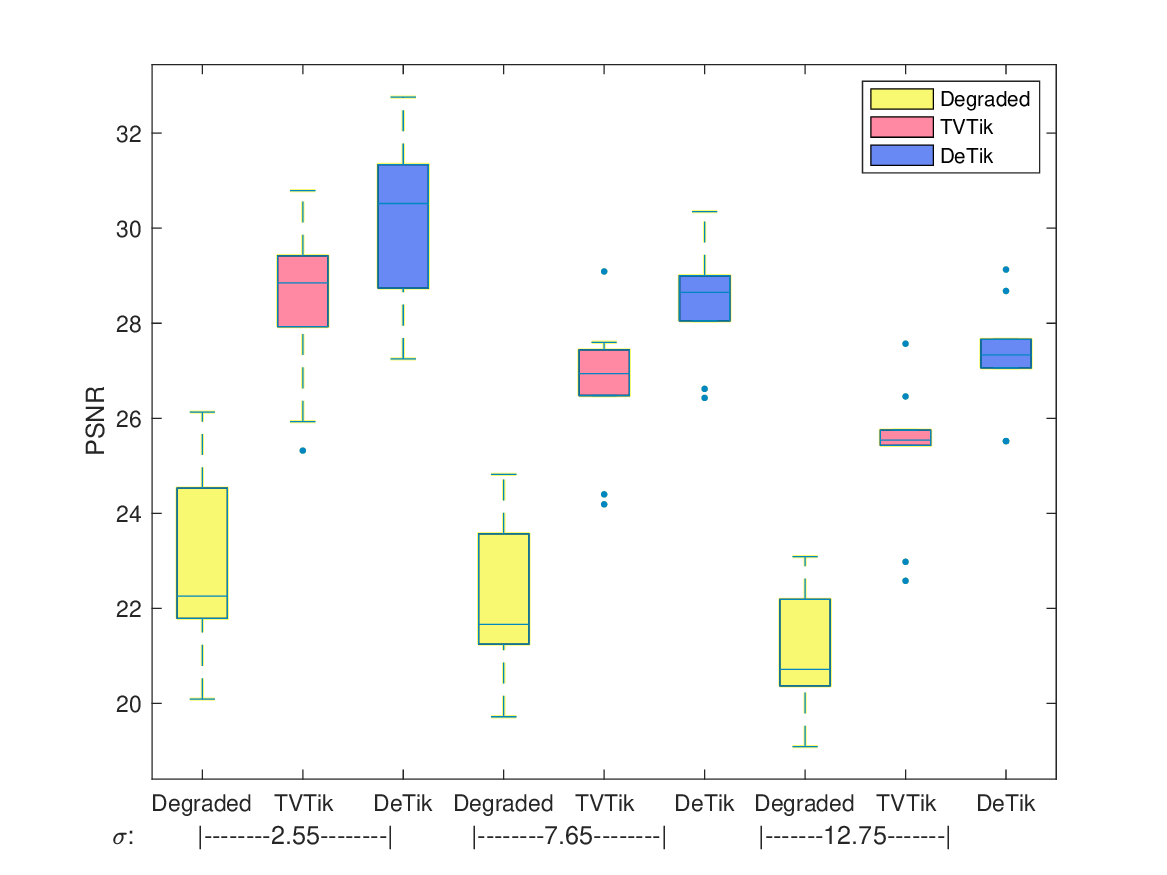}
		}\vspace{-0.03in}
		\centerline{\footnotesize{\y Set14}} 
	\end{minipage}

 \hspace{0.05in}
  \begin{minipage}{0.38\linewidth}
		\centering
		\centerline{\includegraphics[height=1.51in,width=2.5in]{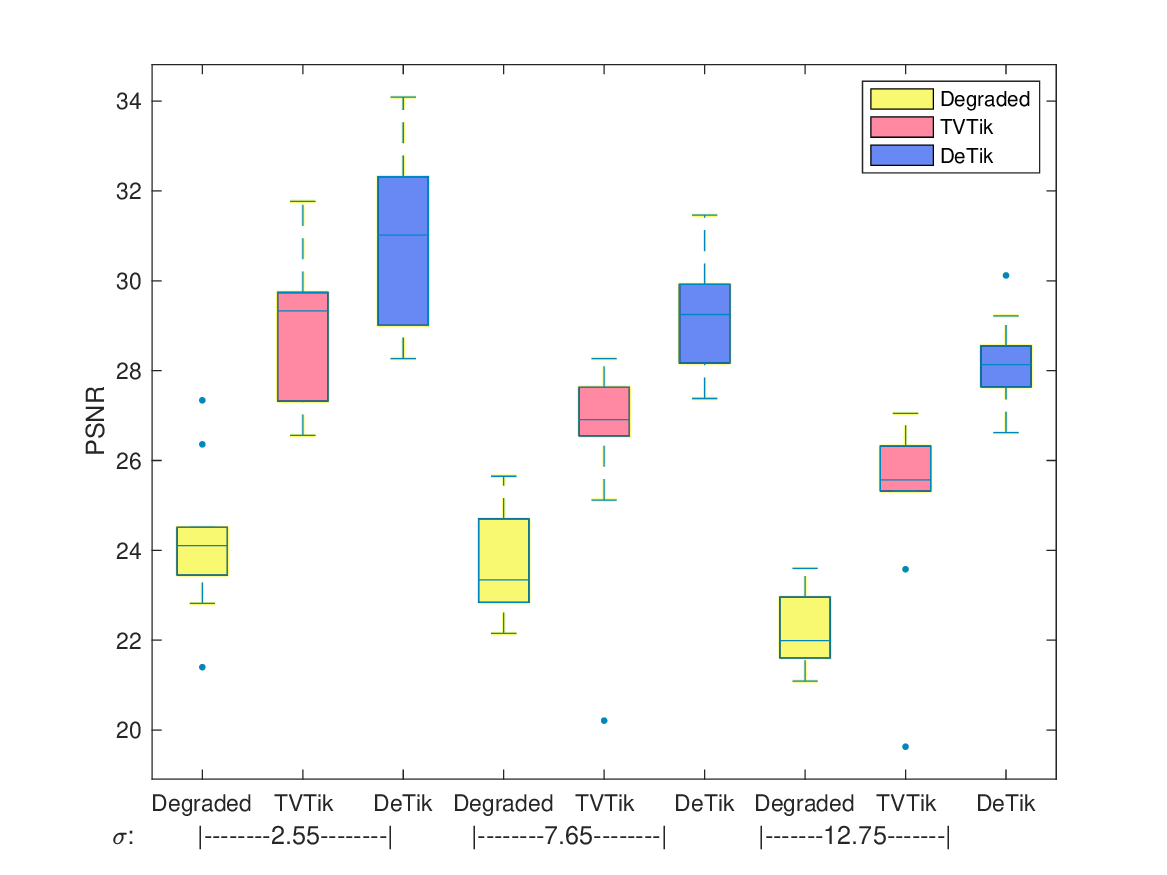}
		}\vspace{-0.03in}
		\centerline{\footnotesize{\y Kodak24}} 
	\end{minipage}
  \begin{minipage}{0.38\linewidth}
		\centering
		\centerline{\includegraphics[height=1.51in,width=2.5in]{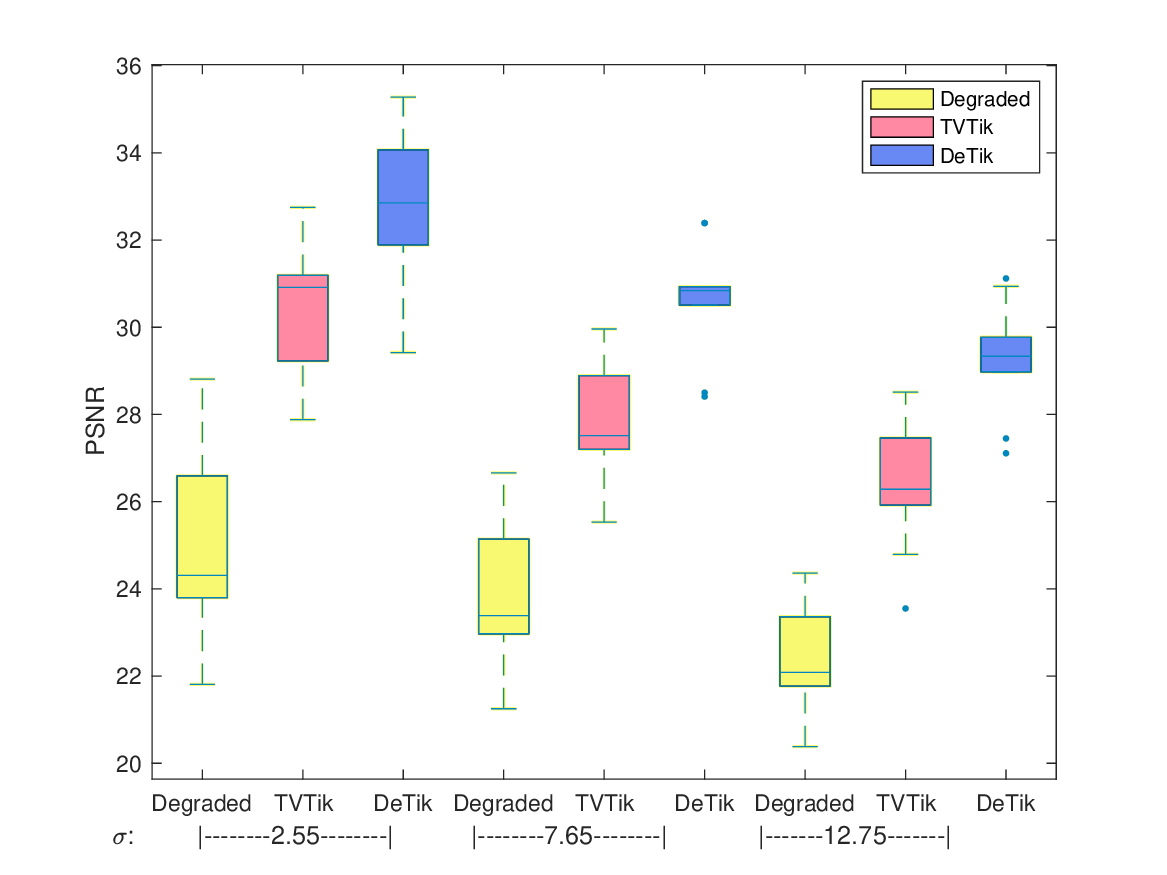}
		}\vspace{-0.03in}
		\centerline{\footnotesize{\y Set17}} 
	\end{minipage}
		\label{table: deblur}
  \caption{\y Average results (PSNR(dB)) of TVTik and DeTik for image deblurring with 10 different blur kernels and 3 noise levels on   Set3C, Set14, Kodak24, and Set17 datasets. }
\end{figure}

\begin{figure}[b!]
\begin{minipage}{0.314\linewidth}
		\centering
		\centerline{\includegraphics[height=1.51in,width=2.3in]{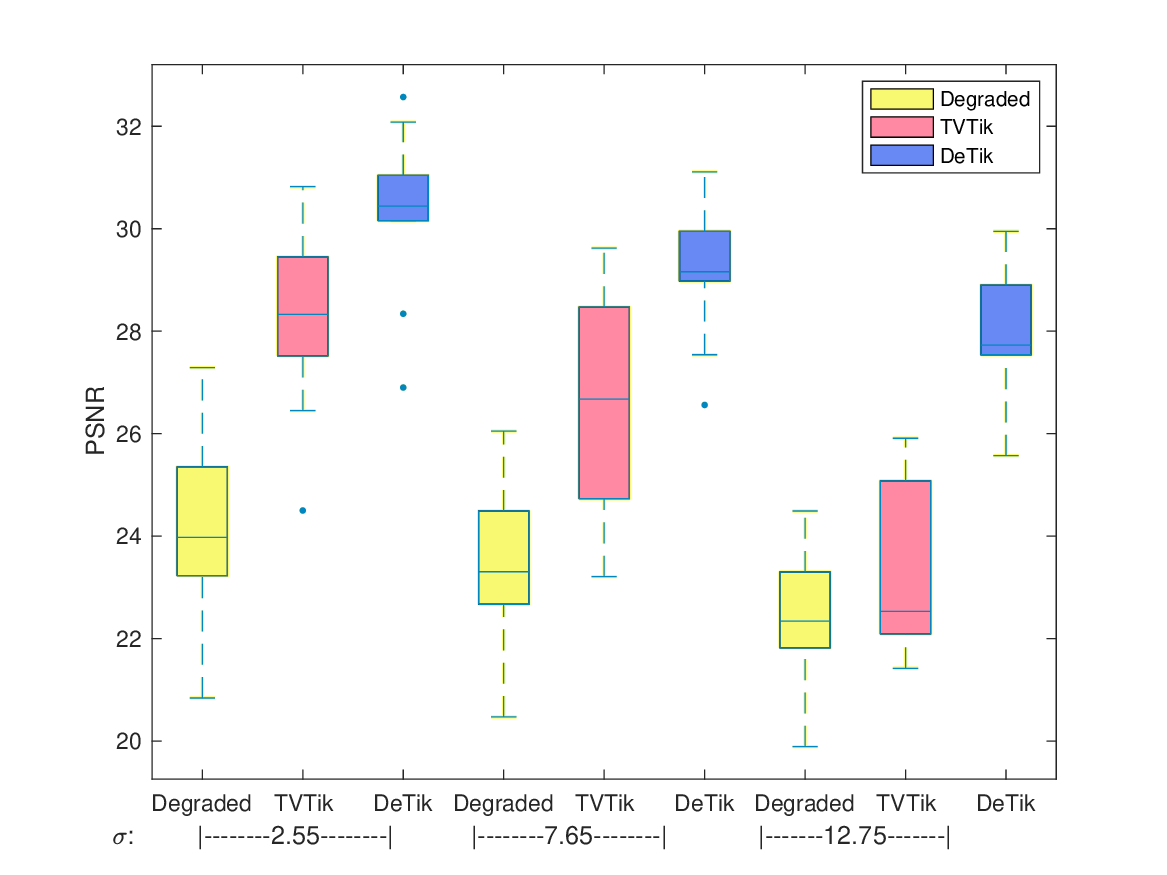}
		}\vspace{-0.03in}
		\centerline{\footnotesize{\y Set5 ($\times 2$)}}
	\end{minipage}\hspace{0.1in}
  \begin{minipage}{0.314\linewidth}
		\centering
		\centerline{\includegraphics[height=1.51in,width=2.3in]{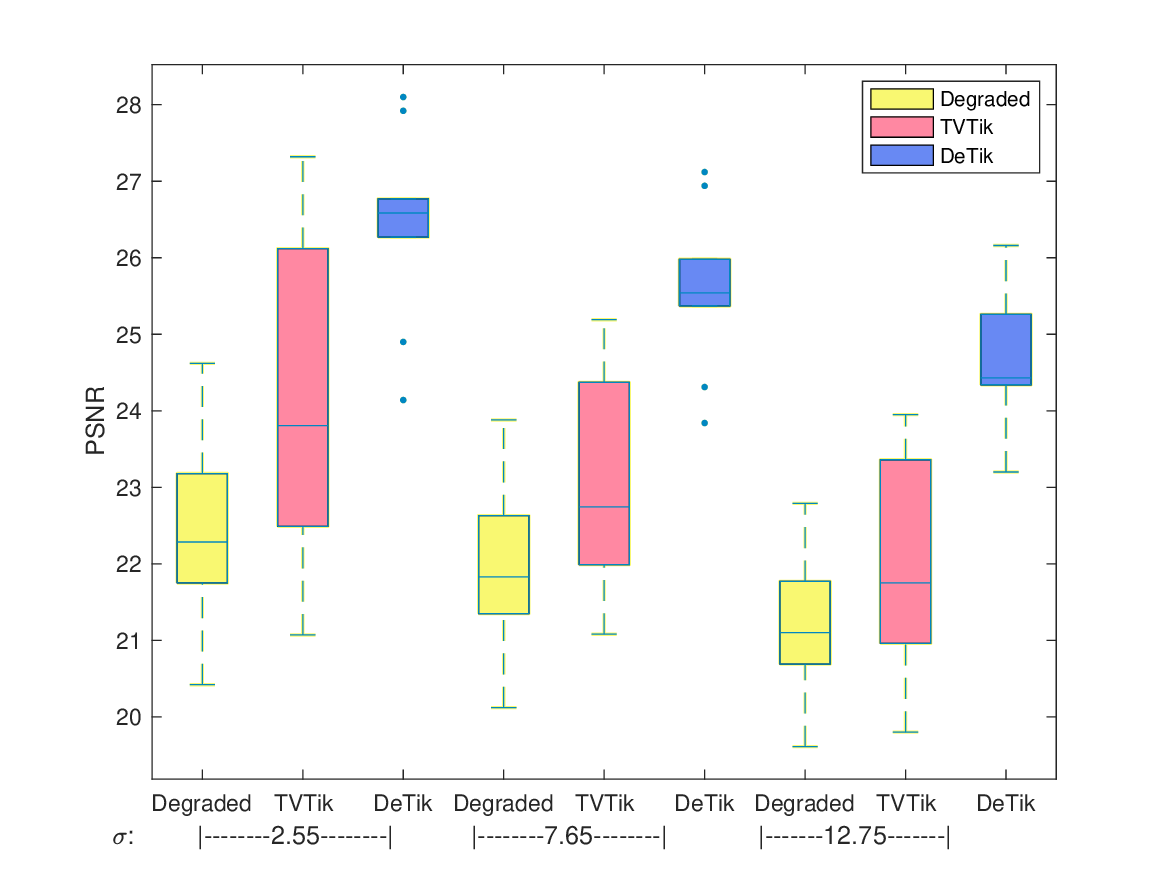}
		}\vspace{-0.03in}
		\centerline{\footnotesize{\y CBSD68 ($\times 2$)}} 
	\end{minipage}\hspace{0.1in}
   \begin{minipage}{0.314\linewidth}
		\centering
		\centerline{\includegraphics[height=1.51in,width=2.3in]{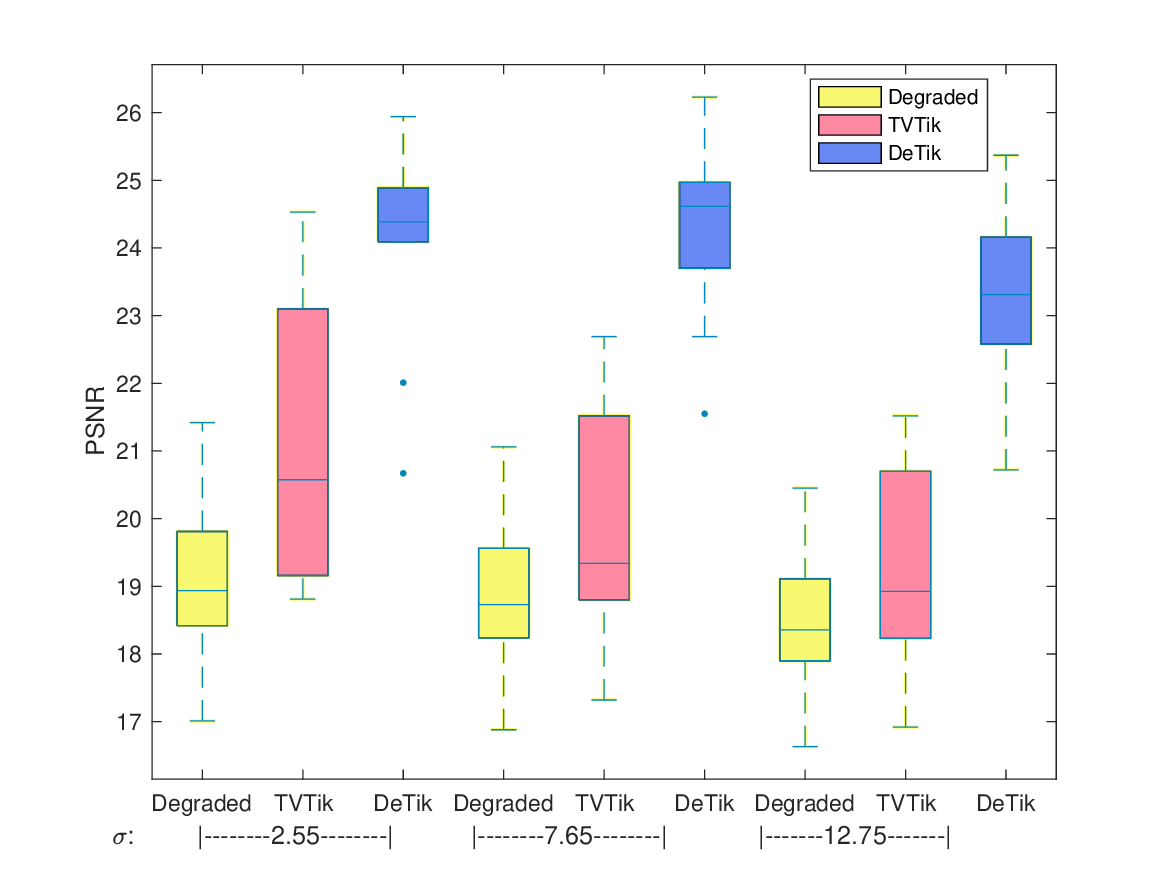}
		}\vspace{-0.03in}
		\centerline{\footnotesize{\y Urban100 ($\times 2$)}} 
	\end{minipage}

   \begin{minipage}{0.314\linewidth}
		\centering
		\centerline{\includegraphics[height=1.51in,width=2.3in]{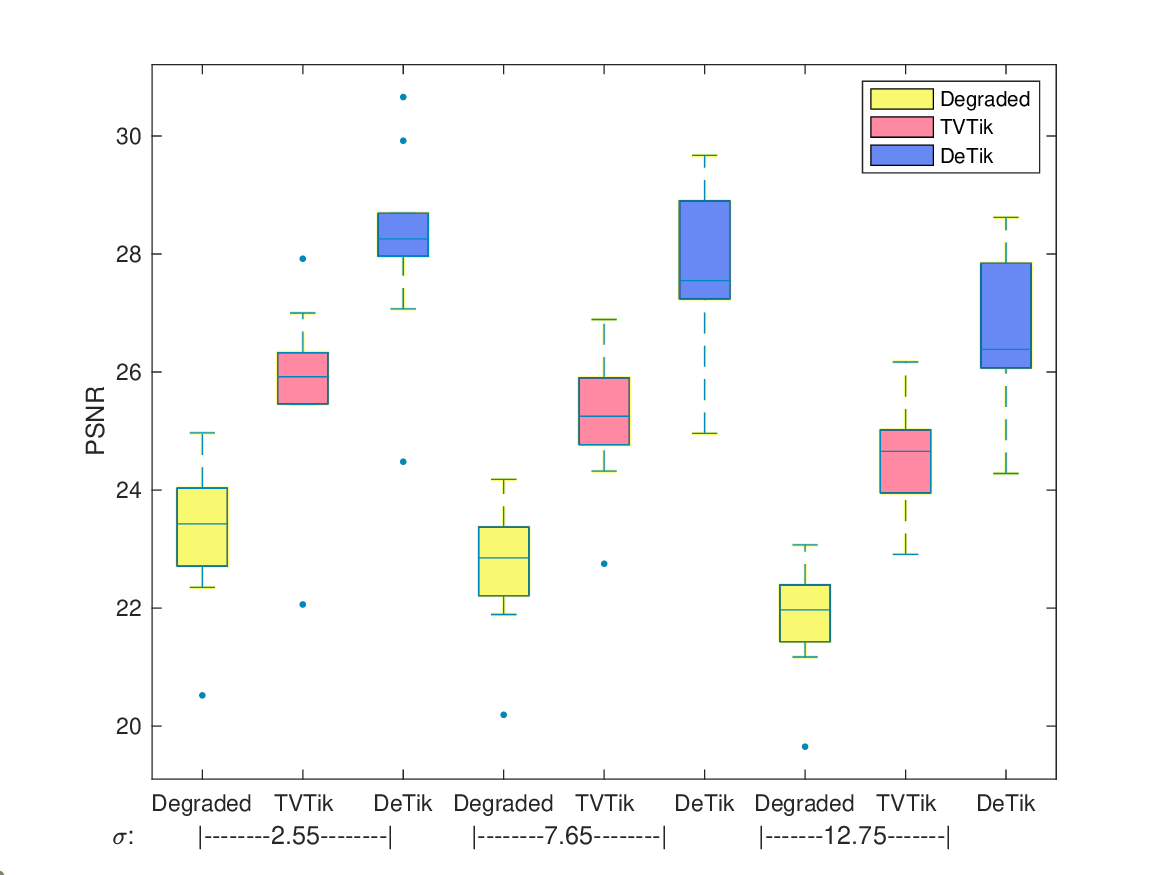}
		}\vspace{-0.03in}
		\centerline{\footnotesize{\y Set5 ($\times 3$)}}
	\end{minipage} \hspace{0.1in}
   \begin{minipage}{0.314\linewidth}
		\centering
		\centerline{\includegraphics[height=1.51in,width=2.3in]{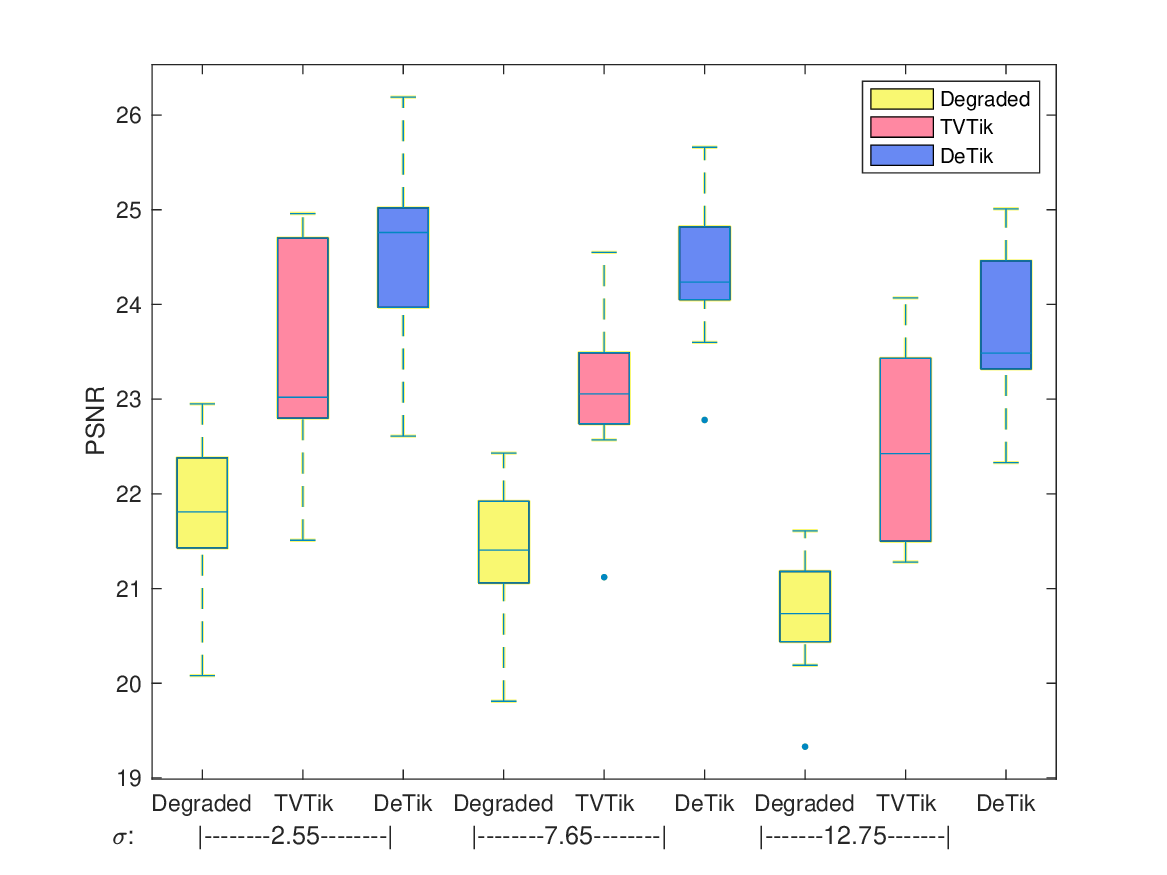}
		}\vspace{-0.03in}
		\centerline{\footnotesize{\y CBSD68 ($\times 3$)}} 
	\end{minipage}\hspace{0.1in}
  \begin{minipage}{0.314\linewidth}
		\centering
		\centerline{\includegraphics[height=1.51in,width=2.3in]{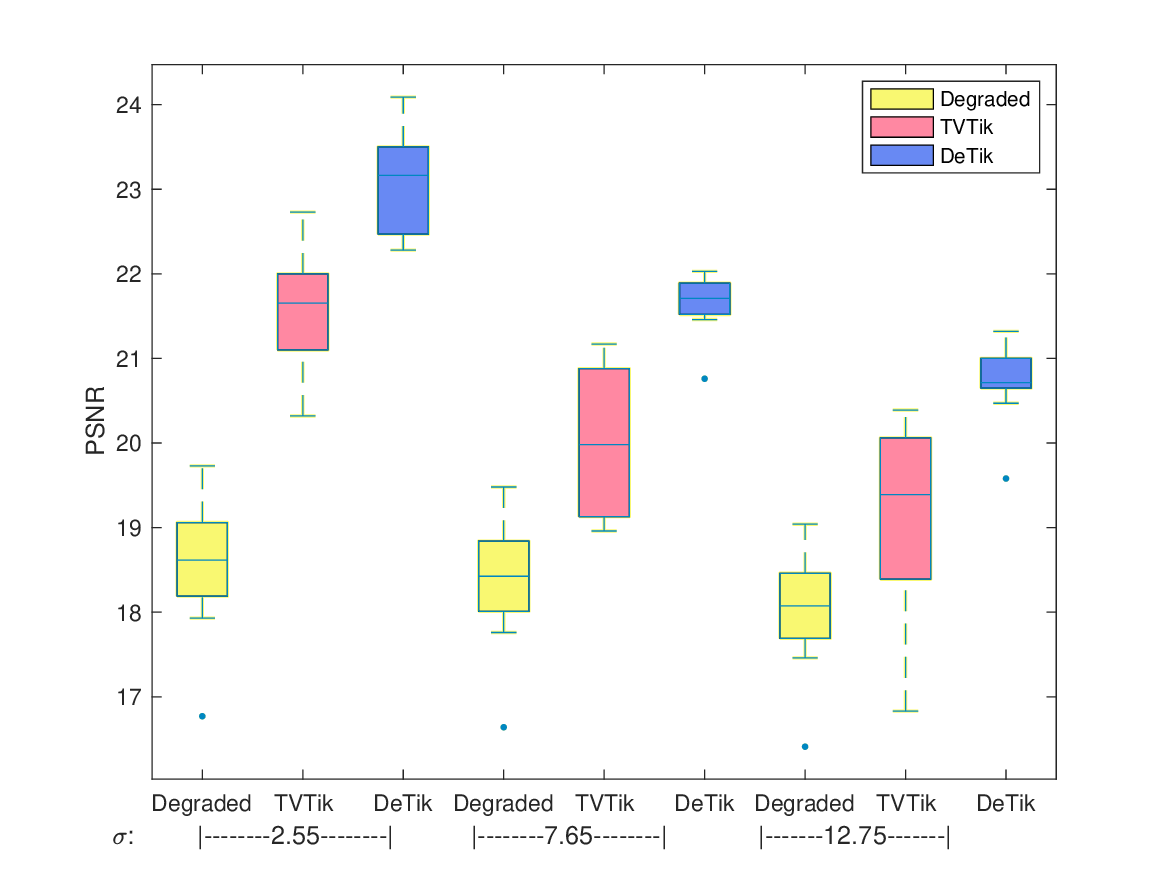}
		}\vspace{-0.03in}
		\centerline{\footnotesize{\y Urban100 ($\times 3$)}} 
	\end{minipage}
		\label{table: SR}
  \caption{\y Average results (PSNR(dB)) of TVTik and DeTik for image super-resolution with 2 scales ($\times 2$ and $\times 3$), 10 different blur kernels and 3 noise levels on   Set5, CBSD68, and Urban100 datasets. }
\end{figure}

\begin{figure}[t!]
\centering
\begin{minipage}{0.38\linewidth}
		\centering
		\centerline{\includegraphics[height=1.51in,width=2.5in]{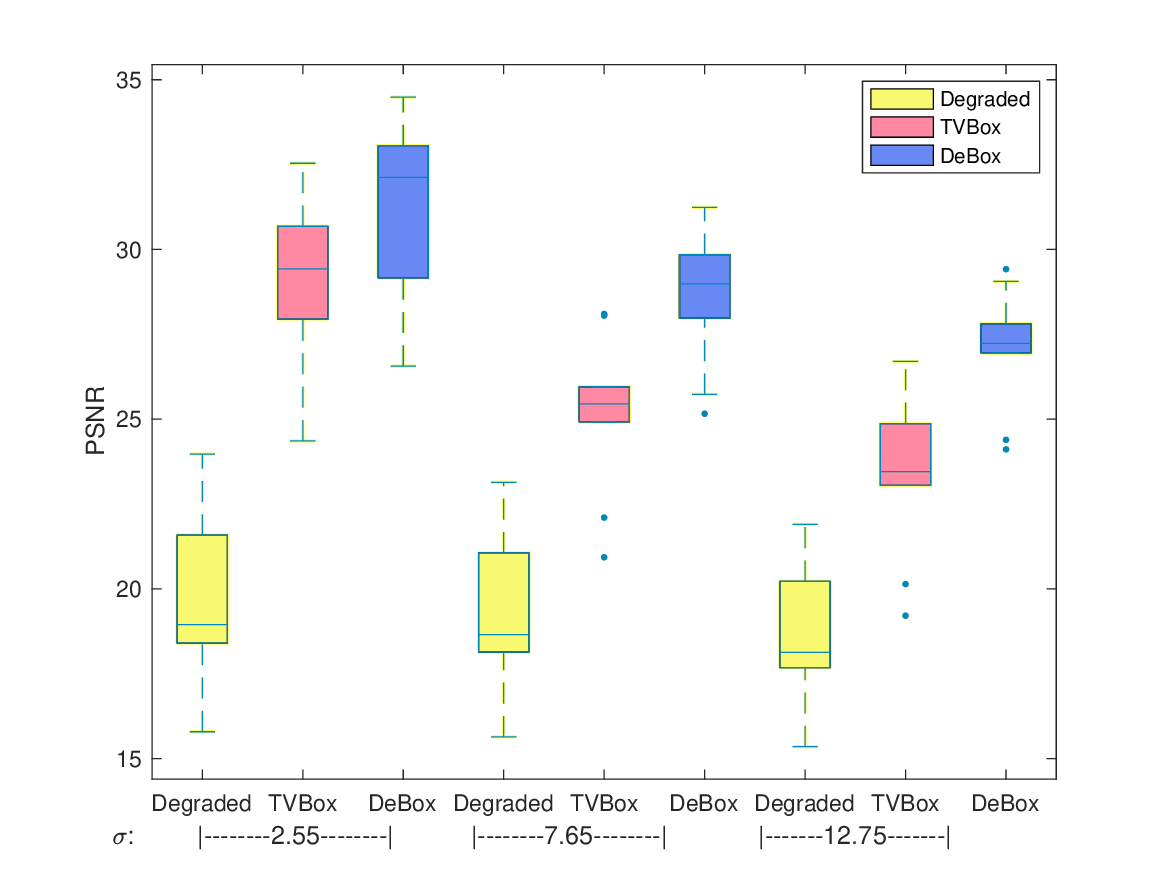}
		}\vspace{-0.03in}
		\centerline{\footnotesize{\y Set3C}}
	\end{minipage} 
 \begin{minipage}{0.38\linewidth}
		\centering
		\centerline{\includegraphics[height=1.51in,width=2.5in]{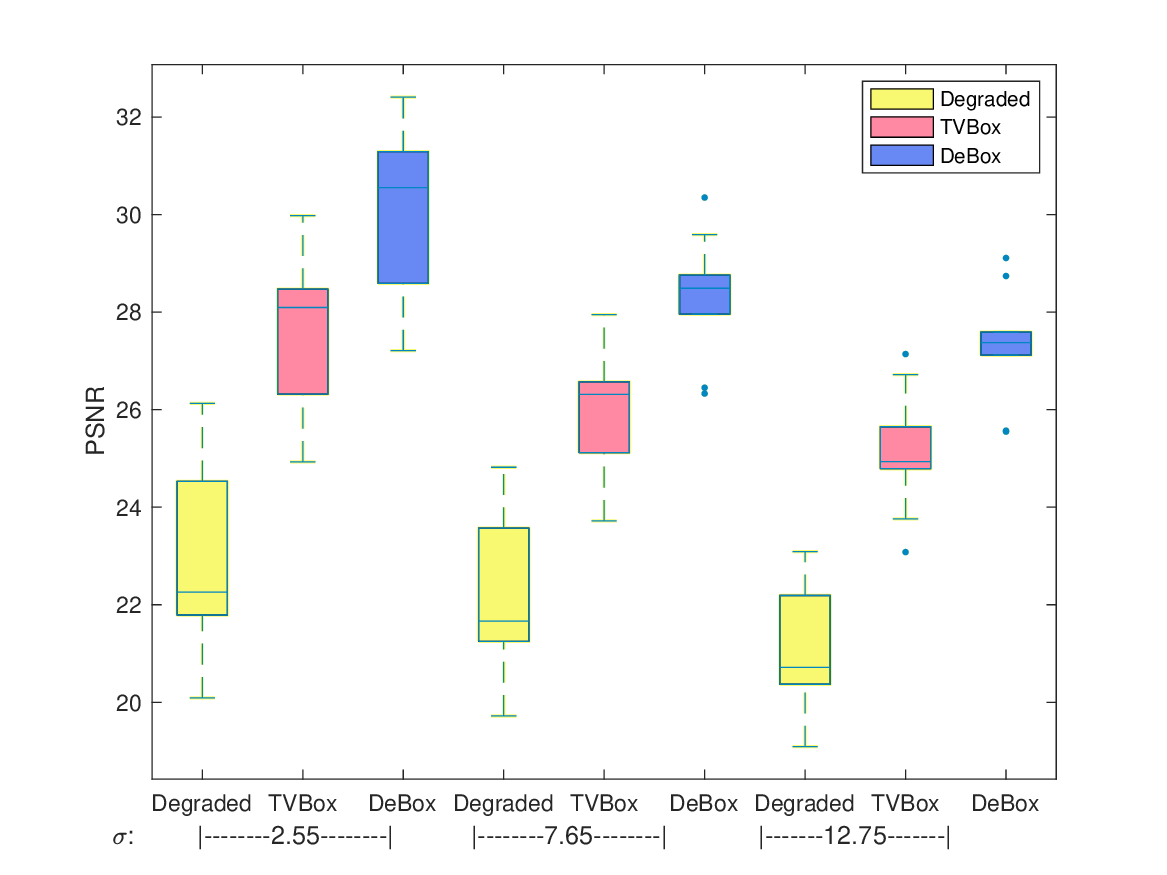}
		}\vspace{-0.03in}
		\centerline{\footnotesize{\y Set14}} 
	\end{minipage}

 \hspace{0.05in}
  \begin{minipage}{0.38\linewidth}
		\centering
		\centerline{\includegraphics[height=1.51in,width=2.5in]{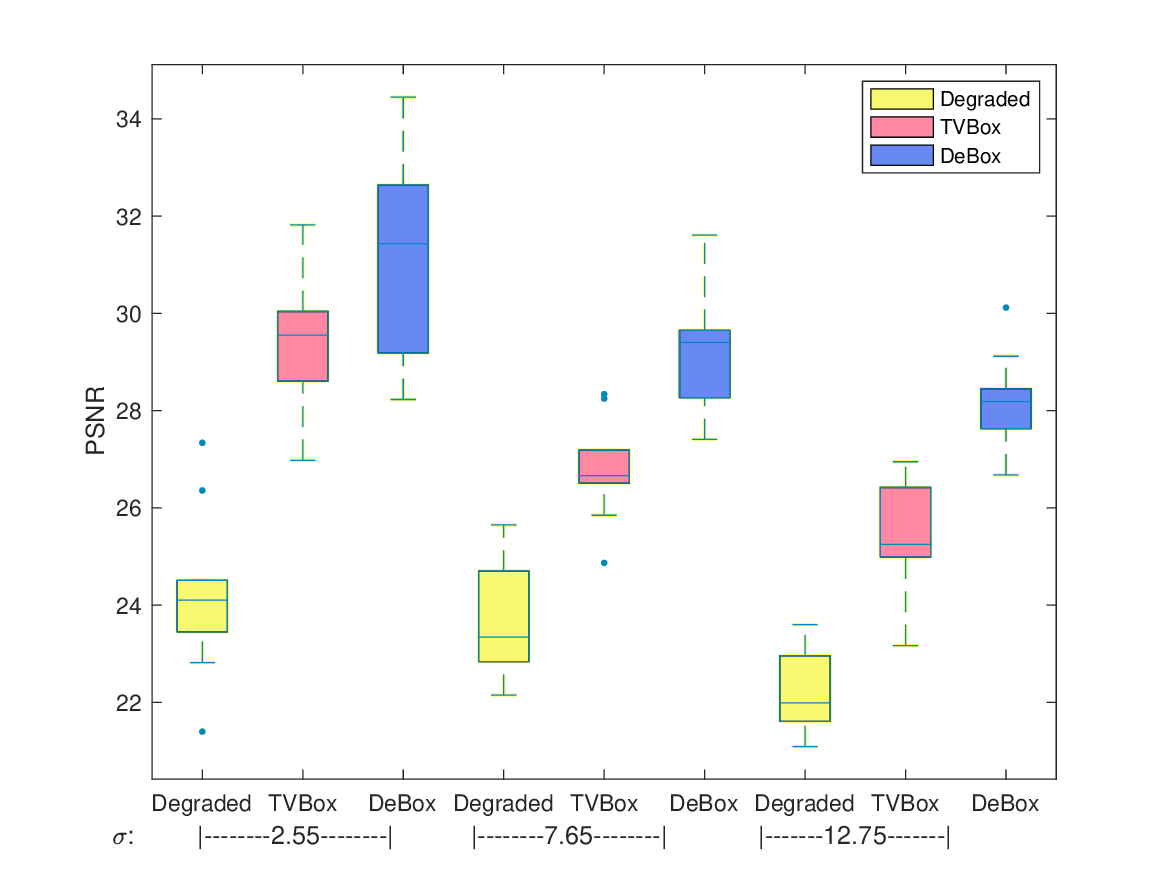}
		}\vspace{-0.03in}
		\centerline{\footnotesize{\y Kodak24}} 
	\end{minipage}
  \begin{minipage}{0.38\linewidth}
		\centering
		\centerline{\includegraphics[height=1.51in,width=2.5in]{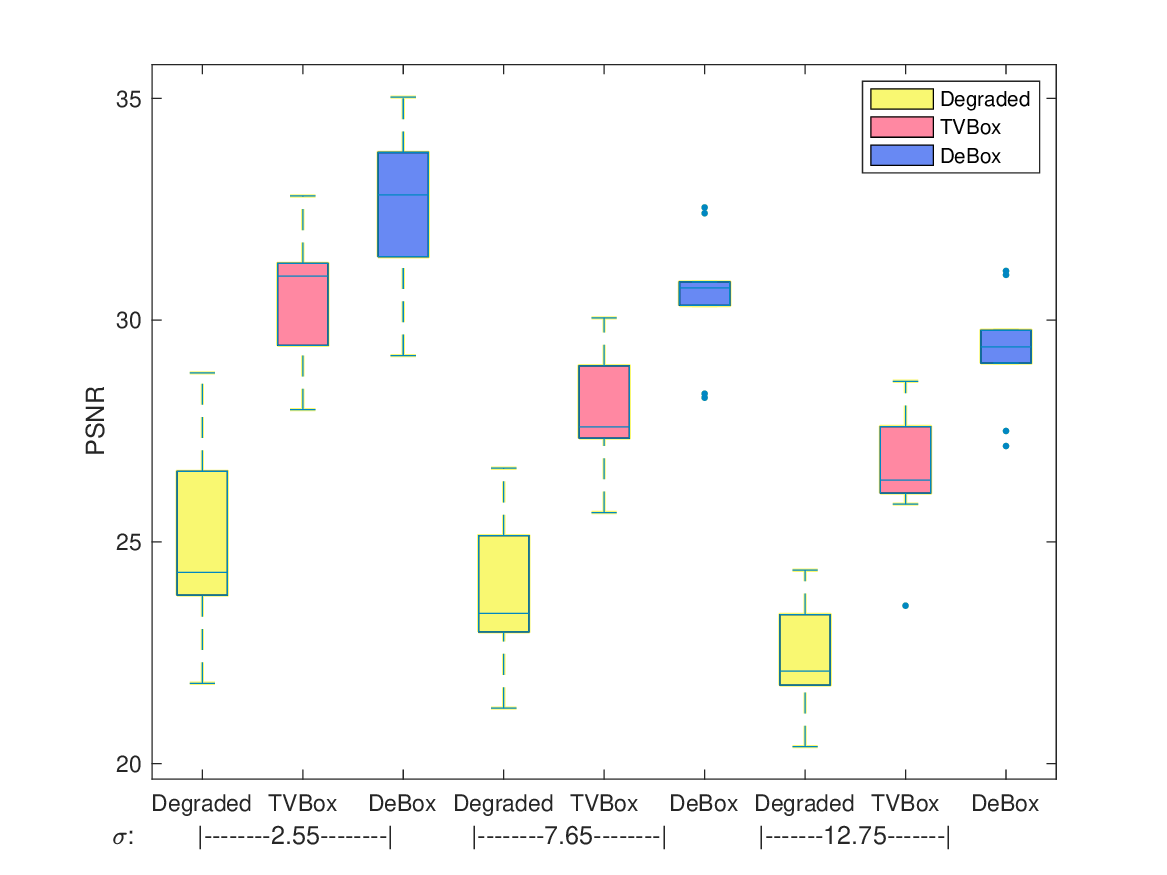}
		}\vspace{-0.03in}
		\centerline{\footnotesize{\y Set17}} 
	\end{minipage}
		\label{table: deblur_2}
  \caption{\y Average results (PSNR(dB)) of TVBox and DeBox for image deblurring with 10 different blur kernels and 3 noise levels on   Set3C, Set14, Kodak24, and Set17 datasets. }
\end{figure}

\begin{figure}[b!]
\begin{minipage}{0.314\linewidth}
		\centering
		\centerline{\includegraphics[height=1.51in,width=2.2in]{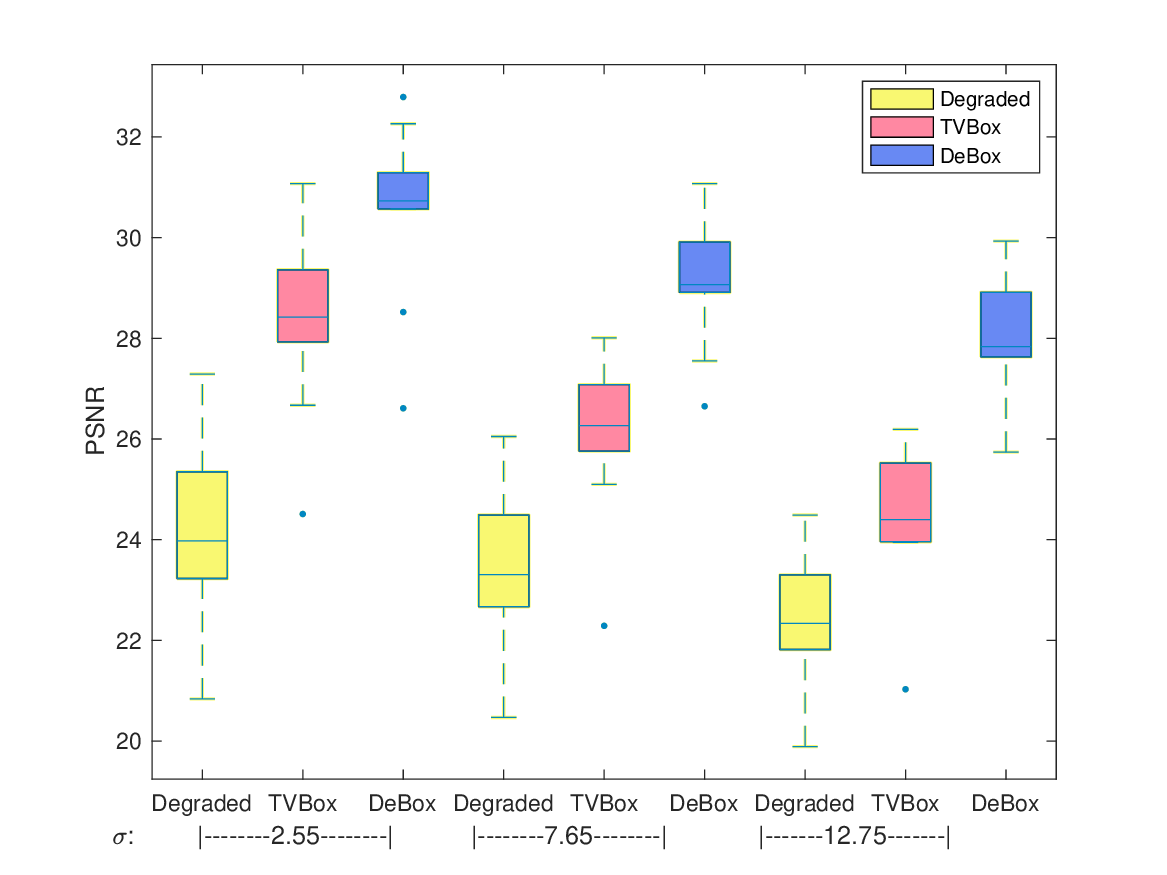}
		}\vspace{-0.03in}
		\centerline{\footnotesize{\y Set5 ($\times 2$)}}
	\end{minipage} \hspace{0.1in}
  \begin{minipage}{0.314\linewidth}
		\centering
		\centerline{\includegraphics[height=1.51in,width=2.2in]{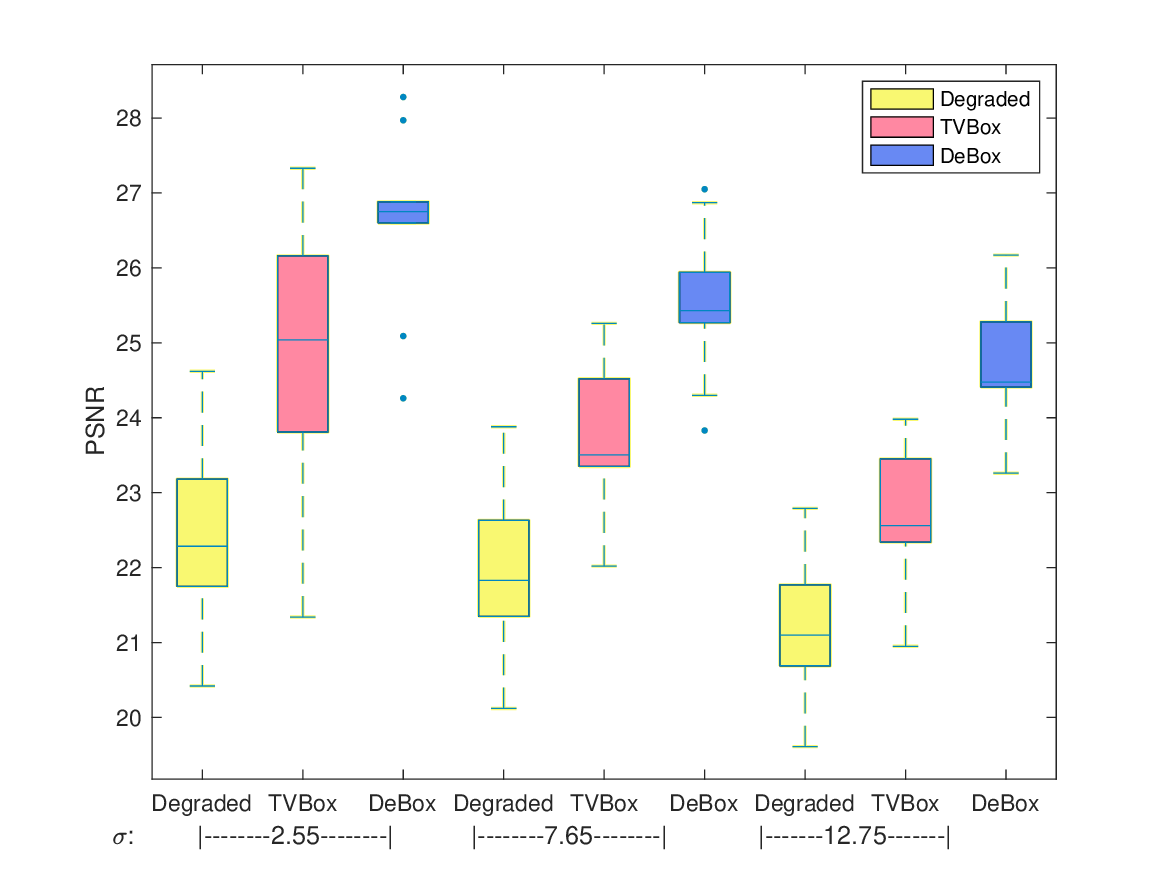}
		}\vspace{-0.03in}
		\centerline{\footnotesize{\y CBSD68 ($\times 2$)}} 
	\end{minipage}\hspace{0.1in}
   \begin{minipage}{0.314\linewidth}
		\centering
		\centerline{\includegraphics[height=1.51in,width=2.2in]{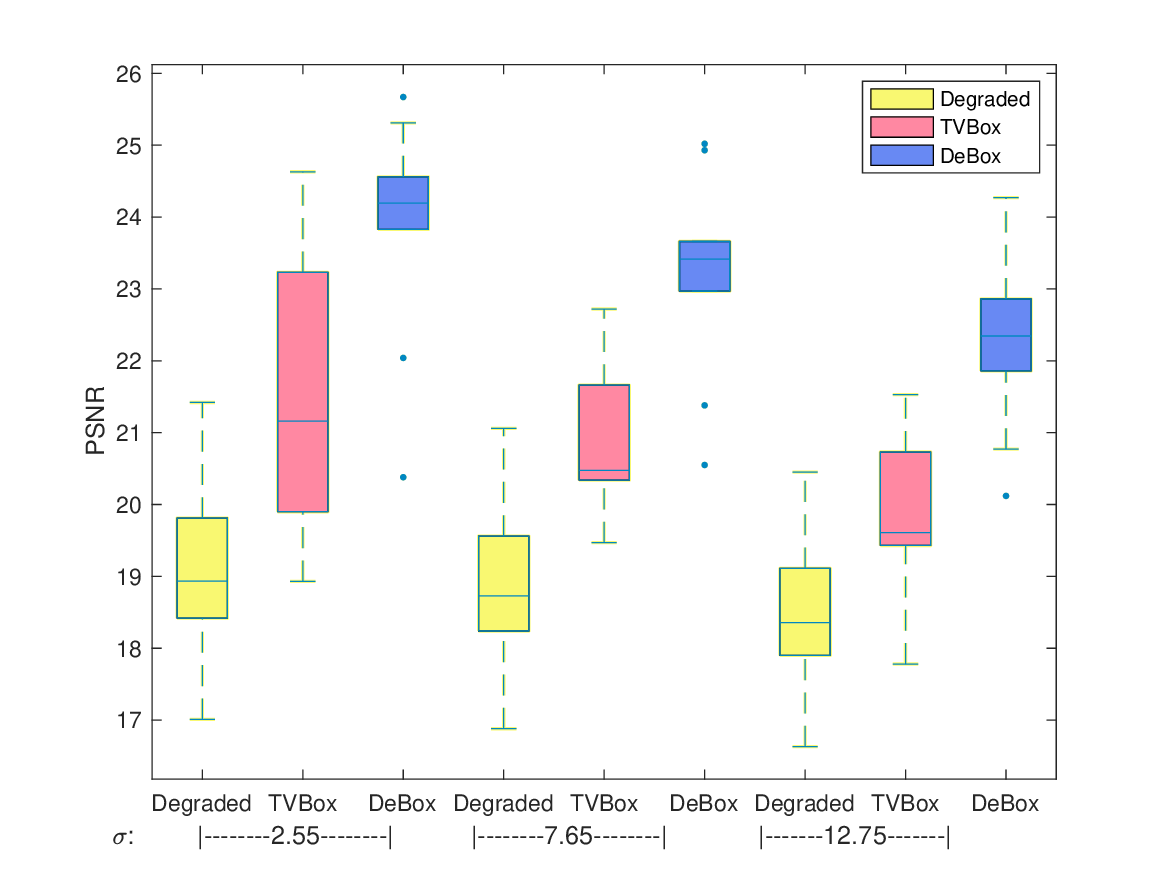}
		}\vspace{-0.03in}
		\centerline{\footnotesize{\y Urban100 ($\times 2$)}} 
	\end{minipage}
 
  \begin{minipage}{0.314\linewidth}
		\centering
		\centerline{\includegraphics[height=1.51in,width=2.2in]{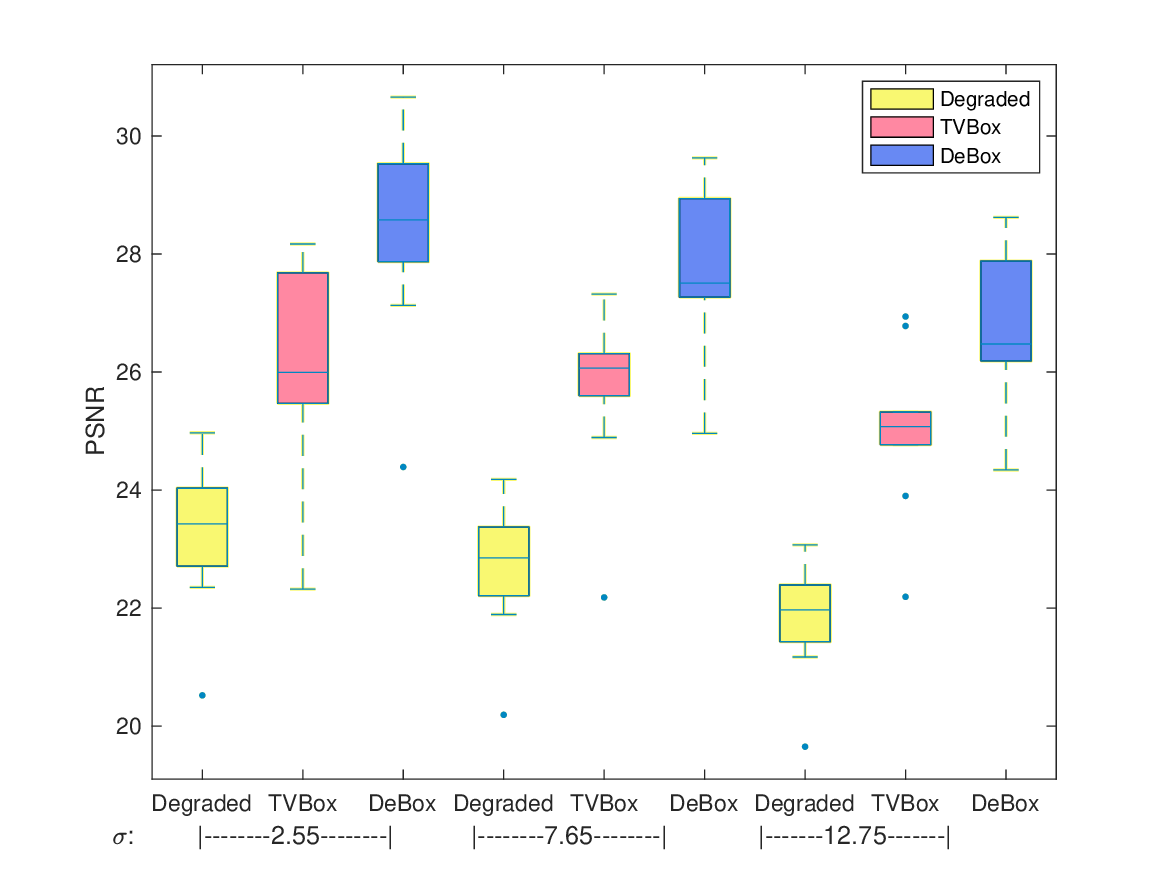}
		}\vspace{-0.03in}
		\centerline{\footnotesize{\y Set5 ($\times 3$)}}
	\end{minipage} \hspace{0.1in}
  \begin{minipage}{0.314\linewidth}
		\centering
		\centerline{\includegraphics[height=1.51in,width=2.2in]{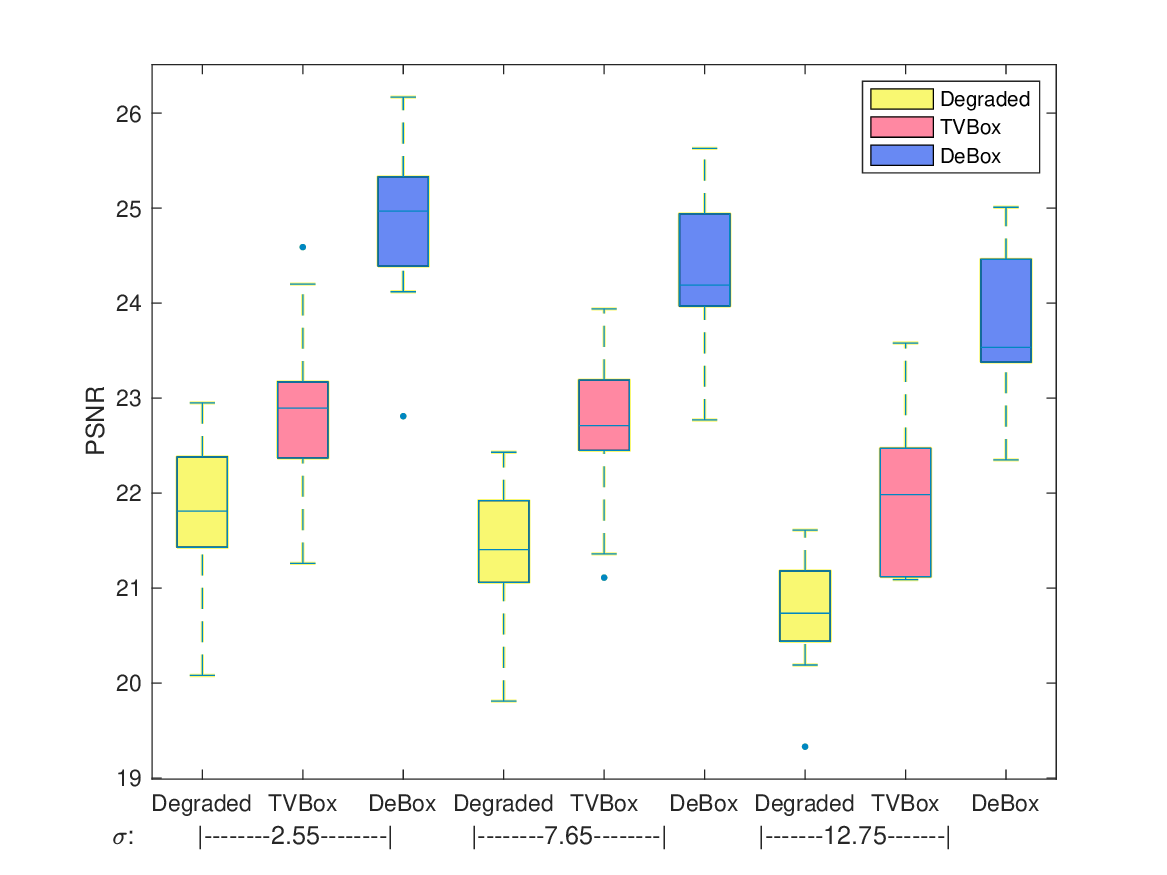}
		}\vspace{-0.03in}
		\centerline{\footnotesize{\y CBSD68 ($\times 3$)}} 
	\end{minipage}\hspace{0.1in}
  \begin{minipage}{0.314\linewidth}
		\centering
		\centerline{\includegraphics[height=1.51in,width=2.2in]{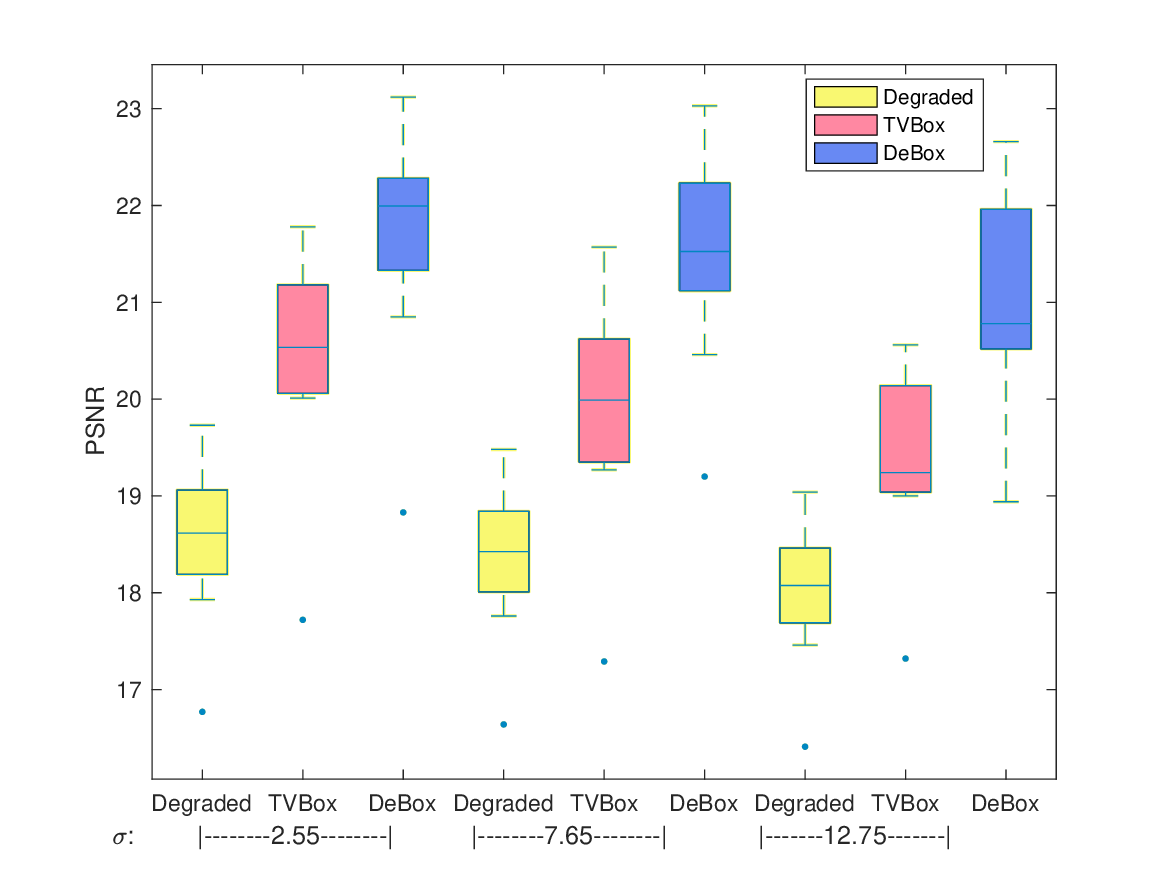}
		}\vspace{-0.03in}
		\centerline{\footnotesize{\y Urban100 ($\times 3$)}} 
	\end{minipage}
		\label{table: SR_2}
\vspace{-0.3cm}  \caption{\y Average numerical results (PSNR(dB)) of TVBox and DeBox for image super-resolution with 2 scales ($\times 2$ and $\times 3$), 10 different blur kernels and 3 noise levels on Set5, CBSD68, and Urban100 datasets. }
\end{figure}

\section{Experimental results on robustness of \cref{alg:buildtree} and \cref{alg:3}}\label{app_E}
To further demonstrate the effectiveness of the proposed methods, we compare the results recovered by the model TVTik and DeTik in \cref{table: deblur} and \cref{table: SR}, and the model DeBox and TVBox in \cref{table: deblur_2} and \cref{table: SR_2}, for image deblurring and super-resolution, respectively. 

We use the Matlab built-in function `boxplot' to create a box plot. 
As shown in \cref{table: deblur}, each picture contains 9 boxes. The yellow, pink, and blue boxes represent the average PSNR values of the degraded images, the images restored by TVTik and DeTik, and the first, second, and third sets of yellow, pink, and blue boxes correspond to the noise levels of $2.55$, $7.65$, and $12.75$, respectively.
On each box, the central mark indicates the median, and the bottom and top edges of the box indicate the $25$th and $75$th percentiles. 
The whiskers extend to the most extreme data points not considered outliers, and the outliers are plotted individually using the dot symbol.  From the box plot, we can see that the median of DeTik is higher than that of TVTik. Note that the TVTik model also enhances the quality of the degraded image when compared to the yellow boxes. 
These results demonstrate that \cref{alg:buildtree} is efficient in image restoration, as it successfully restores images affected by 10 different kernels and 3 different noise levels.
Similarly to the deblurring results, the box plot is presented to show the super-resolution outcomes. The first and second rows of the box plot display the results of super-resolution under degradation with scale factors $\times 2$ and $\times 3$, respectively.
The results presented in  \cref{table: SR} also demonstrate that the proposed algorithm effectively solves the tested models, and DeTik outperforms TVTik in terms of recovery quality for image super-resolution.

For different noise levels and blur kernels, the average image restoration results of Set3C, Set14, Kodak24, and Set17 with box plot are demonstrated in \cref{table: deblur_2}. The yellow, pink, and blue boxes denote the average PSNR of the degraded images, the image restored by TVBox and DeBox. 
The first, second, and third sets of yellow, pink, and blue boxes correspond to the noise levels of $2.55$, $7.65$, and $12.75$, respectively. 
Similarly, the super-resolution results for two scale factors, $\times 2$ and $\times 3$, are presented in \cref{table: SR_2}.
The result demonstrates that the proposed method exhibits consistent and stable image restoration performance. 
From \cref{table: deblur_2} and \cref{table: SR_2},
we can see that \cref{alg:3} effectively solves the DeBox model, and DeBox
outperforms TVBox in terms of recovery quality for both image deblurring and super-resolution tasks. 
The experiment results also demonstrate that \cref{alg:3} can handle the minimization with the non-differentiable term.

\section*{Acknowledgement}
The authors are grateful to the anonymous referees for their valuable comments, which largely improve the quality of this paper.

}

\bibliographystyle{siamplain}
\bibliography{references}

\end{document}